\documentclass[a4paper]{amsart}

\usepackage{color}
\usepackage{amssymb,amsmath,amsthm,amstext,amsfonts,amscd}
\usepackage{psfrag}
\usepackage{url}
\usepackage{amsfonts}
\usepackage{mathrsfs}
\usepackage{graphicx}
\usepackage{enumerate}
\usepackage{fancyhdr}
\usepackage[colorlinks,linkcolor=black,anchorcolor=black,citecolor=black,hyperindex=true,CJKbookmarks=true]{hyperref}

\usepackage{epsfig}
\usepackage[T1]{fontenc}

\makeatletter \@addtoreset{equation}{section} \makeatother

\renewcommand\thetable{\thesection.\@arabic\c@table}

\usepackage{tikz}
\usetikzlibrary{shapes, arrows, positioning, shadows}

\makeatletter
\@namedef{subjclassname@2020}{2020 Mathematics Subject Classification}
\makeatother

\newtheorem{lemma}{Lemma}[section]
\newtheorem{theorem}[lemma]{Theorem}
\newtheorem{proposition}[lemma]{Proposition}
\newtheorem{corollary}[lemma]{Corollary}
\newtheorem{question}{Question}

\newtheorem{maintheorem}{Theorem}

\newtheorem{maincorollary}{Corollary}

\theoremstyle{remark}
\newtheorem{definition}[lemma]{Definition}
\newtheorem{remark}[lemma]{Remark}

\makeatletter
\def\lst@lettertrue{\let\lst@ifletter\iffalse}
\makeatother

\numberwithin{figure}{section}
\title{Distributional Chaos in Over-Saturated Sets}

\author{Chenwei Yu}

\address{Chenwei Yu, School of Mathematical Sciences,  Fudan University\\Shanghai 200433, People's Republic of China}
\email{23110180052@m.fudan.edu.cn}

\begin{document}

\renewcommand{\thepage}{\arabic{page}}
\setcounter{page}{1}

\keywords{Non-uniform specification, Over-saturated set, Distributional chaos}
\subjclass[2020] {37B65, 37B05}

\maketitle

\begin{abstract}
	We study distributional chaos in over-saturated sets under the non-uniform specification property.
	We prove that for any dynamical system with the non-uniform specification property, every non-empty over-saturated set exhibits distributional chaos of type 1.
	We further provide sufficient conditions for certain differences of over-saturated sets to exhibit distributional chaos of type 1.
	We also construct examples showing that some of these conditions are optimal.
\end{abstract}

\thispagestyle{empty}

\section{Introduction}

Throughout this paper, we assume that $(X,f)$ is a dynamical system,
where $X$ is a non-degenerate (i.e. with at least two points) compact space equipped with metric $d$ and $f:X\rightarrow X$ is a continuous map.
Denote by $\mathfrak{B}(X)$ the Borel $\sigma$-algebra of $X$.
Let $\mathcal{M}(X)$, $\mathcal{M}_f(X)$ and $\mathcal{M}_f^{erg}(X)$ denote the spaces of Borel probability measures, $f$-invariant Borel probability measures and $f$-ergodic Borel probability measures on $X$, respectively, all equipped with weak* topology.
Let $\mathbb{Z}$, $\mathbb{N}$ and $\mathbb{N}_0$ denote the sets of integers, positive integers and non-negative integers, respectively.
For any set $S$, let $|S|$ denote the cardinality of $S$.

Let $C(X)$ be the Banach algebra of real-valued continuous functions on $X$ with the supremum norm $\|\cdot\|$, i.e., for $\varphi\in C(X)$, $\|\varphi\|:=\sup_{x\in X}|\varphi(x)|$.
For any $\varphi\in C(X)$, define the {\it $\varphi$-irregular set} $I_\varphi(f)$ by
\begin{equation*}
	I_{\varphi}(f)=\bigg\{x\in X:\lim_{n\rightarrow\infty}\frac1n\sum_{k=0}^{n-1}\varphi(f^kx)\;\text{does not exist}\bigg\}.
\end{equation*}
The \emph{irregular set} $\mathrm{IR}(f)$ and \emph{completely-irregular set} $\mathrm{CI}(f)$ of $(X,f)$ are defined by
\begin{equation*}
	\mathrm{IR}(f)=\bigcup_{\varphi\in C(X)} I_{\varphi}(f)\quad\text{and}\quad\mathrm{CI}(f)=\bigcap_{\varphi\in\mathfrak{C}(f)}I_{\varphi}(f),
\end{equation*}
respectively, where $\mathfrak{C}(f)=\{\varphi\in C(X):I_{\varphi}(f)\neq\varnothing\}$.

Although Birkhoff's ergodic theorem implies that $\mu(\mathrm{IR}(f))=0$ for every $\mu\in\mathcal{M}_f(X)$, various aspects of the dynamical complexity of $\varphi$-irregular sets have been extensively studied.
The specification property, introduced by Bowen \cite{Bowen-1971}, and its variants have played an important role in these studies.
For dynamical systems with the specification property, every non-empty $\varphi$-irregular set is residual in $X$ \cite{Li-Wu-2014}, carries full topological entropy \cite{Chen-Kupper-Shu-2005} and full topological pressure \cite{Thompson-2010}.
Under the almost specification property, every $\varphi$-irregular set is either empty or carries full topological entropy \cite{Thompson-2012}.

In this paper, we consider dynamical complexity in the sense of distributional chaos.
The notion of chaos was first introduced in mathematical language by Li and Yorke \cite{Li-Yorke-1975}.
A set $S$ is called \emph{scrambled} if for any $x,y\in S$ with $x\neq y$,
\begin{equation*}
	\liminf_{n\rightarrow\infty}d(f^nx,f^ny)=0
	\quad\text{and}\quad
	\limsup_{n\rightarrow\infty}d(f^nx,f^ny)>0.
\end{equation*}
A dynamical system is called \emph{Li-Yorke chaotic} if there exists an uncountable scrambled subset $S\subset X$.
In \cite{Schweizer-Smital-1994}, Schweizer and Sm\'ital introduced the notion of distributional chaos, which is one of the most extensively studied concepts of chaos in the sense of Li and Yorke.
There are three types of distributional chaos: DC1 (distributional chaos of type 1), DC2 and DC3.
We focus on distributional chaos of type 1 (DC1), which is the strongest form of distributional chaos.
A pair of points $\{x,y\}$ is called \emph{DC1-scrambled} if the following holds:
\begin{itemize}
	\item for any $t>0$
	\begin{equation*}
		\limsup_{n\rightarrow\infty}\frac1n|\{j\in[0,n-1]:d(f^jx,f^jy)<t\}|=1;
	\end{equation*}
	\item there exists $t_0>0$ such that
	\begin{equation*}
		\liminf_{n\rightarrow\infty}\frac1n|\{j\in[0,n-1]:d(f^jx,f^jy)<t_0\}|=0.
	\end{equation*}
\end{itemize}

\begin{definition}
	A set $S$ is called a \emph{DC1-scrambled set} if each pair of distinct points in $S$ is DC1-scrambled.
\end{definition}

Chen and Tian \cite{Chen-Tian-2021} studied irregular sets and level sets from the viewpoint of DC1 for dynamical systems with the specification property.
They proved the existence of uncountable DC1-scrambled subsets in both kinds of sets.
Subsequently, Hou, Tian and Zhao \cite{Hou-Tian-Zhao-2025} studied the chaoticity of generic points for ergodic measures and obtained almost DC1 and Banach DC1 under suitable hyperbolic assumptions.
More recently, Hou, Tian and Zhao \cite{Hou-Tian-Zhao-2026} considered DC1 in shrinking target problems.
Under certain forms of the exponential specification property, they showed that the corresponding shrinking target sets are DC1.

Let $\delta_x$ denote the Dirac measure at $x$.
For $x\in X$ and $n\in\mathbb{N}$, let
\begin{equation*}
	\delta_x^n=\frac 1n\sum_{k=0}^{n-1}\delta_{f^kx}.
\end{equation*}
Let $V_f(x)$ be the set of accumulation points of $\{\delta_x^n:n\geq 1\}$.
It follows from \cite[Proposition~3.8]{DGS1976} that every $V_f(x)$ is a non-empty compact connected subset of $\mathcal{M}_f(X)$.
For any non-empty connected compact subset $K\subset\mathcal{M}_f(X)$, define the \emph{saturated set} $G_K$ by
\begin{equation*}
	G_K=\{x\in X:V_f(x)=K\}.
\end{equation*}
For any non-empty subset $K\subset\mathcal{M}_f(X)$, define the \emph{over-saturated set} $G^K$ by
\begin{equation*}
	G^K=\{x\in X:V_f(x)\supset K\}.
\end{equation*}
Write $G_{\mu}=G_{\{\mu\}}$ and $G^{\mu}=G^{\{\mu\}}$.
Each point in $G_{\mu}$ is called a \emph{generic point} of $\mu$.

We study DC1 in non-empty over-saturated sets for dynamical systems with the non-uniform specification property.

\begin{definition}[{\cite[Definition~1.1]{Lin-Tian-Yu-2024}}]
	We say that a dynamical system $(X,f)$ satisfies the \emph{non-uniform specification property} with \emph{gap function} $M(n,\varepsilon):\mathbb{N}\times(0,+\infty)\rightarrow\mathbb{N}$ if the following conditions hold:
	\begin{itemize}
		\item $M(n,\varepsilon)$ is non-decreasing with $n$ and non-increasing with $\varepsilon$;
		\item for any positive integer $k\geq2$, any $x_1,\cdots,x_k\in X$ and any non-negative integers $a_1,b_1,\cdots,a_k,b_k$ with
		\begin{equation*}
			a_1\leq b_1<\cdots<a_k\leq b_k
		\end{equation*}
		and
		\begin{equation*}
			a_{i+1}-b_i\geq M(b_i-a_i+1,\varepsilon)\quad\text{for}\quad 1\leq i\leq k-1,
		\end{equation*}
		there exists a point $z\in X$ such that
		\begin{equation*}
			d(f^{n-a_i}x_i,f^nz)\leq\varepsilon,\quad\text{for}\quad a_i\leq n\leq b_i,1\leq i\leq k.
		\end{equation*}
	\end{itemize}
	In particular, if for every $\varepsilon>0$, the gap function $M(n,\varepsilon)=M(\varepsilon)$ is independent of $n$, then $(X,f)$ is said to satisfy the \emph{specification property} with gap function $M(\varepsilon)$.
\end{definition}

Recall that a point $x\in X$ is called transitive if $\omega(x,f)=X$, where $\omega(x,f)$ denotes the $\omega$-limit set of $x$.
The set of transitive points in $(X,f)$ is denoted by $\mathrm{Trans}(f)$.
Now we state our first main result, which establishes the distributional chaos of type 1 in any non-empty over-saturated set.

\begin{maintheorem}\label{thm-DC1-1}
	Suppose that $(X,f)$ satisfies the non-uniform specification property.
	Let $K\subset\mathcal{M}_f(X)$ be a non-empty subset.
	If $G^K\neq\varnothing$, then for any non-empty open set $U\subset X$, there exists an uncountable DC1-scrambled set $S^K$ contained in $G^K\cap U\cap\mathrm{Trans}(f)$.
\end{maintheorem}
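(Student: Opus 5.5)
Fix $x_0\in G^K$, a point $p\in U$ and $\varepsilon_0>0$ with $\overline{B(p,2\varepsilon_0)}\subset U$. The scrambled set will be indexed by infinite binary sequences (or a Cantor-type subfamily). Each point is produced by one specification construction that alternates between shadowing long orbit segments of $x_0$ and pieces that force the required DC1 behavior. Since $X$ is non-degenerate, pick two points $q_0\neq q_1$ with $d(q_0,q_1)=3t_0>0$. We also use a countable dense sequence $(y_m)$ in $X$ to force transitivity.

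\textbf{Construction.} Choose a rapidly increasing sequence of block lengths $N_1\ll N_2\ll\cdots$ and precision levels $\varepsilon_j\downarrow0$. For $\xi\in\{0,1\}^{\mathbb N}$, the orbit of $z_\xi$ first visits $B(p,\varepsilon_0)$ at time $0$. It then runs through stages $j=1,2,\dots$. Stage $j$ consists of:
\begin{enumerate}
\item a copy of the orbit segment $x_0,\dots,f^{N_j-1}x_0$, common to all $\xi$;
\item a short visit within $\varepsilon_j$ of $y_1,\dots,y_j$ (transitivity);
\item a \emph{separation block}: a segment of length $L_j\gg$ everything before, during which the orbit stays near the fixed point-orbit of $q_{\xi_{k}}$. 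Here $k=k(j)$ runs through the indices so that each index recurs infinitely often, e.g. $k(j)$ is the first-coordinate-of-pairing enumeration.
\end{enumerate}
To make the separation blocks shadowable we do not need periodic points. For each $j$ we shadow the single segment $q_{\xi_k}, fq_{\xi_k},\dots$ only at its initial time, so the relevant property is closeness at one instant. We therefore use instead a block where the orbit of $z_\xi$ follows the orbit of $q_{\xi_k}$ for $L_j$ steps. Two points with $\xi_k\neq\eta_k$ then follow the orbits of $q_0$ and $q_1$, which need not stay apart. To fix this, choose $q_0,q_1$ so that the orbit segments remain apart on a positive-density set. The safer alternative is to use shadowing of $x_0$ itself with a time shift: in the $\xi_k=1$ variant, follow $f^{s}x_0$ instead of $x_0$. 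I expect some such device, or the existence of two distinct invariant measures, to be needed; if $\mathcal M_f(X)$ is a singleton, the DC1 condition must come from a different source. This is the main obstacle.

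Gap lengths are $M(\cdot,\varepsilon_j)$, which depend on the preceding segment length. They are fixed in advance and independent of $\xi$, so all $z_\xi$ share the same time schedule. Since $M$ is finite for each pair $(n,\varepsilon)$, choosing $N_{j+1}$ and $L_{j+1}$ after all gaps of stage $j$ makes the gaps negligible in density. The infinite concatenation is realized by the standard nested closed-set argument: apply the finite specification to the first $j$ stages, and take a limit point using compactness, with the precisions summable.

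\textbf{Verification.}
\begin{itemize}
\item $z_\xi\in U$ by the first step.
\item $z_\xi\in\mathrm{Trans}(f)$ by the $y_m$ visits.
\item $V_f(z_\xi)\supset V_f(x_0)\supset K$: the blocks copying $x_0$ have length $N_j$ dominating all previous time. So for each time $n\le N_j$, $\delta^{T_j+n}_{z_\xi}$ is close to $\delta^n_{x_0}$. Every limit of $\delta^{n_i}_{x_0}$ is then a limit of empirical measures of $z_\xi$, using $N_j/T_j\to\infty$ and $\varepsilon_j\to0$ with uniform continuity of test functions.
\end{itemize}
The DC1 upper condition holds because all $z_\xi$ shadow the same $x_0$-segment of dominating length with precision $\varepsilon_j\to0$. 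Hence along $n=T_j+N_j$ the fraction of times with $d(f^iz_\xi,f^iz_\eta)<2\varepsilon_j$ tends to $1$. The lower condition, along the ends of separation blocks whose index $k$ has $\xi_k\neq\eta_k$, needs the separation block to keep the two orbits $t_0$-apart on density near $1$. This is exactly the obstacle above. I would handle it by making separation blocks follow $x_0$ and its image under a shift: if $x_0$'s orbit is not eventually periodic, $d(f^ix_0,f^{i+s}x_0)$ is large on a positive-density set, though not density $1$. So I expect to need a refinement: replace the single separation block by many short alternating sub-blocks that shadow two fixed distinct orbit segments $w_0,w_1$ of length $\ell$ with $d(f^iw_0,f^iw_1)\geq 3t_0$ for all $i<\ell$. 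If $f$ is not the identity near some point, such segments exist in a restricted form. If they do not exist at all, $X$ is trivial, or the equicontinuity contradicts specification on a non-degenerate space.
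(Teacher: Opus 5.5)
There is a genuine gap, and it is the one you flag yourself: the second DC1 condition is never established. For $\xi_k\neq\eta_k$ you need times $n$ at which the proportion of $i<n$ with $d(f^iz_\xi,f^iz_\eta)\ge t_0$ is close to $1$. None of your devices delivers this.
\begin{itemize}
\item Orbits of two arbitrary points $q_0\neq q_1$ can be asymptotic.
\item Time-shifted copies of $x_0$ give at most a positive-density set of separated times. They do not even give that in general: take $x_0$ generic for a fixed-point measure.
\item Short alternating sub-blocks of a fixed length $\ell$ must be separated by gaps of length $M(\ell,\varepsilon_j)$, on which the two orbits are uncontrolled. So the separated proportion is at most $\ell/(\ell+M(\ell,\varepsilon_j))$. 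Under non-uniform specification this has no reason to be near $1$, and it may even tend to $0$ as $\varepsilon_j\to0$.
\end{itemize}
Positive density of separated times is not enough, since DC1 requires upper density $1$. Your closing claim about the existence of $w_0,w_1$ is also unsupported. Note that if such segments existed for every $\ell$ with a uniform $t_0$, compactness would already produce a distal pair.

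The missing ingredient is a distal pair $\{p_0,p_1\}$ with $\inf_{i\ge0}d(f^ip_0,f^ip_1)=3\gamma>0$, shadowed \emph{synchronously}. In stage $j$, require $d(f^iz_\xi,f^ip_{\xi_k})\le\varepsilon_j$ for $s\le i\le t$, with $t-s\ge s^2$. This is a single specification segment, namely the orbit of $f^sp_{\xi_k}$. Whenever $\xi_k\neq\eta_k$, you then get $d(f^iz_\xi,f^iz_\eta)\ge 3\gamma-2\varepsilon_{j-1}\ge\gamma$ on all of $[s,t]$, once the precisions are below $\gamma$. So at $n=t+1$ the separated proportion is at least $(t-s+1)/(t+1)\to1$. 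The possibly huge gap after the block is harmless, because you evaluate at the block's end. Your first idea, following the orbits of $q_0$ and $q_1$, works verbatim once $q_0,q_1$ are chosen this way.

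Distal pairs always exist under non-uniform specification. Take $d(y,z)>5\varepsilon$ and $m=M(1,\varepsilon)$. Specification with one-point segments gives $x$ with $f^{2im}x\in\overline{B(y,\varepsilon)}$ and $f^{(2i+1)m}x\in\overline{B(z,\varepsilon)}$ for all $i$. Uniform continuity rules out a fixed point in $\omega(x,f)$, so $\omega(x,f)$ contains a non-degenerate minimal set. That set carries a distal pair by Lemma~\ref{distal-lemma1}. This is how the paper argues, in Lemma~\ref{distal-lemma3} and the proof of Theorem~\ref{DC1-thm1}.

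A smaller, routine point concerns the verification of $K\subset V_f(z_\xi)$. The approximation $\delta^{T_j+n}_{z_\xi}\approx\delta^n_{x_0}$ holds only for $T_j\ll n\le N_j$, not for all $n\le N_j$. So $N_j$ must be chosen so that each $\mu\in K$ is approximated by some $\delta^n_{x_0}$ with $n$ in that window. This can be done by compactness of $V_f(x_0)$, or, as in the paper, via a dense sequence in $K$ and nested lengths.
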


According to Lemma~\ref{pre-saturated-lemma6}, under the non-uniform specification property, the completely-irregular set $\mathrm{CI}(f)$ contains the non-empty over-saturated set $G^{\mathcal{K}}$, where $\mathcal{K}=\bigcup_{x\in X}V_f(x)$.
From Theorem~\ref{thm-DC1-1}, it follows that

\begin{maincorollary}\label{cor-DC1}
	Suppose that $(X,f)$ satisfies the non-uniform specification property, then $\mathrm{CI}(f)$ contains an uncountable DC1-scrambled set.
	In particular, every non-empty $\varphi$-irregular set contains an uncountable DC1-scrambled set under the non-uniform specification property.
\end{maincorollary}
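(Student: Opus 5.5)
The plan is to obtain both assertions directly from Theorem~\ref{thm-DC1-1}, applied to the single choice $K=\mathcal{K}:=\bigcup_{x\in X}V_f(x)$, together with Lemma~\ref{pre-saturated-lemma6}. No new construction is needed.

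First I will take from Lemma~\ref{pre-saturated-lemma6} that, under the non-uniform specification property, the over-saturated set $G^{\mathcal{K}}$ is non-empty and satisfies $G^{\mathcal{K}}\subset\mathrm{CI}(f)$. The inclusion is elementary, and I would recall why it holds in case the lemma only records non-emptiness. Let $x\in G^{\mathcal{K}}$ and let $\varphi\in\mathfrak{C}(f)$. Pick $y\in I_\varphi(f)$. Since $\frac1n\sum_{k<n}\varphi(f^ky)=\int\varphi\,d\delta_y^n$ does not converge, compactness of $\mathcal{M}(X)$ gives $\mu,\nu\in V_f(y)$ with $\int\varphi\,d\mu\neq\int\varphi\,d\nu$. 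Both measures lie in $\mathcal{K}\subset V_f(x)$. Hence the averages $\int\varphi\,d\delta_x^n$ have two distinct limit points along subsequences, so $x\in I_\varphi(f)$. As $\varphi\in\mathfrak{C}(f)$ was arbitrary, $x\in\mathrm{CI}(f)$.

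Next I apply Theorem~\ref{thm-DC1-1} with $K=\mathcal{K}$, which is a non-empty subset of $\mathcal{M}_f(X)$, and with $U=X$. Since $G^{\mathcal{K}}\neq\varnothing$, the theorem yields an uncountable DC1-scrambled set
\[
S^{\mathcal{K}}\subset G^{\mathcal{K}}\cap\mathrm{Trans}(f)\subset\mathrm{CI}(f),
\]
which proves the first claim.

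For the second claim, let $\varphi\in C(X)$ with $I_\varphi(f)\neq\varnothing$. By definition $\varphi\in\mathfrak{C}(f)$, so $\mathrm{CI}(f)\subset I_\varphi(f)$, and $S^{\mathcal{K}}$ is therefore also an uncountable DC1-scrambled subset of $I_\varphi(f)$. The only substantive input is the non-emptiness of $G^{\mathcal{K}}$, which is where non-uniform specification enters. I take this from Lemma~\ref{pre-saturated-lemma6} rather than reprove it, so I do not expect a genuine obstacle here.
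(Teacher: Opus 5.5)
Your proposal is correct and follows essentially the paper's route: the paper combines Proposition~\ref{pre-saturated-prop5} and Lemma~\ref{pre-saturated-lemma6} to get $G_{\mathcal{K}}=G^{\mathcal{K}}\subset\mathrm{CI}(f)$, then invokes Theorem~\ref{DC1-thm1} (equivalent to your use of Theorem~\ref{thm-DC1-1} with $K=\mathcal{K}$), and the ``in particular'' follows from $\mathrm{CI}(f)\subset I_\varphi(f)$ exactly as you say. One citation slip: the non-emptiness of $G^{\mathcal{K}}$ comes from Proposition~\ref{pre-saturated-prop5}(3), not from Lemma~\ref{pre-saturated-lemma6}; your direct verification of $G^{\mathcal{K}}\subset\mathrm{CI}(f)$ is correct and needs no specification hypothesis.
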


According to Proposition~\ref{pre-saturated-prop5}, under the non-uniform specification property, every non-empty over-saturated set contains $G^{\mathcal{K}}$.
Thus, to prove Theorem~\ref{thm-DC1-1}, it suffices to construct an uncountable DC1-scrambled set in $G^{\mathcal{K}}$.
This leads to the following question.

\begin{question}\label{question}
	Under non-uniform specification, is there an uncountable DC1-scrambled set contained in $G^K\setminus G^L$ whenever $G^K\setminus G^L\neq\varnothing$?
\end{question}

We recall the definition of the non-uniform $\tau$-specification property from \cite{Lin-Tian-Yu-2025+}.
For a dynamical system $(X,f)$ with non-uniform specification, let $\mathscr{G}(X,f)$ be the collection of all gap functions for which $(X,f)$ satisfies the non-uniform specification property.
Elements of $\mathscr{G}(X,f)$ are called \emph{admissible gap functions}.
For $M\in\mathscr{G}(X,f)$, let
\begin{equation*}
	\tau(M):=\sup_{\varepsilon>0}\liminf_{n\rightarrow\infty}\frac{M(n,\varepsilon)}{n}.
\end{equation*}
Define
\begin{equation*}
	\tau_*(X,f)=\inf_{M\in\mathscr{G}(X,f)}\tau(M).
\end{equation*}
We say that $(X,f)$ satisfies the \emph{non-uniform $\tau$-specification property} if $\tau_*(X,f)=\tau$ and this infimum is attained by an admissible gap function.
Equivalently, there is an $M\in\mathscr{G}(X,f)$ with $\tau(M)=\tau$, and every $M'\in\mathscr{G}(X,f)$ satisfies $\tau(M')\geq\tau$.

We answer Question~\ref{question} for the case $\tau<\infty$ when both $K=\{\mu\}$ and $L=\{\nu\}$ are singletons in $\mathcal{M}_f^{erg}(X)$.

\begin{maintheorem}\label{thm-DC1-2}
	Let $\tau\in[0,\infty)$.
	Suppose that $(X,f)$ satisfies the non-uniform $\tau$-specification property.
	Then for any two distinct ergodic measures $\mu,\nu\in\mathcal{M}_f^{erg}(X)$ and any non-empty open set $U\subset X$, there exists an uncountable DC1-scrambled set contained in $(G^{\mu}\setminus G^{\nu})\cap U\cap\mathrm{Trans}(f)$.
\end{maintheorem}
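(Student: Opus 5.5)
We will construct, by a Cantor-type coding, an uncountable family of points $\{z_\xi:\xi\in\Xi\}\subset U$, indexed by an uncountable set $\Xi\subset\{0,1\}^{\mathbb N}$ of sequences that pairwise differ infinitely often and agree on infinitely many long blocks. Each $z_\xi$ will shadow, via the non-uniform specification property with an admissible gap function $M$ satisfying $\tau(M)=\tau<\infty$, a concatenation of orbit segments chosen from the following pieces.
\begin{itemize}
\item A point of $U$, for the starting segment.
\item Finite orbit segments passing through a countable dense set, to guarantee $z_\xi\in\mathrm{Trans}(f)$.
\item Long orbit segments of a $\mu$-generic point $x_\mu$.
\item Long orbit segments of a fixed measure $\omega\neq\nu$ that is far from $\nu$.
\end{itemize}
For $\omega$ we take $\omega=\mu$ when possible, together with a second ergodic measure $\rho\neq\nu$ whose support contains two points at distance bounded below. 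If that choice is unavailable, we take two generic orbits whose segments separate.

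Membership in $G^\mu$ comes from stages in which the orbit follows $x_\mu$ for a time $T_k$ dominating everything before it, so that $\mu\in V_f(z_\xi)$. To keep $\nu\notin V_f(z_\xi)$, we must ensure that every empirical measure $\delta^n_{z_\xi}$ stays uniformly away from $\nu$ for all large $n$, not only along the stage times. Our plan is to choose a continuous $\varphi$ and a gap $\eta>0$ such that $\int\varphi\,d\mu>\int\varphi\,d\nu+3\eta$, and to require that every shadowed block $B$ have a Birkhoff average above $\int\varphi\,d\nu+2\eta$. Any prefix average is then a convex combination of block averages, partial-block averages, and gap contributions, and we need this combination to stay above $\int\varphi\,d\nu+\eta$.

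The main obstacle is controlling the gaps and the partial blocks. The gap inserted after a block of length $n$ has length about $M(n,\varepsilon)$, which can be as large as roughly $\tau n$. On the gap the orbit is uncontrolled, so $\varphi$ there might average near $\int\varphi\,d\nu$ or lower. We intend to use $\tau<\infty$ together with the freedom in choosing $\varphi$.

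\begin{itemize}
\item We split every long block into many short sub-blocks of length $m$. Since $M$ is non-decreasing in $n$ and $\liminf M(n,\varepsilon)/n\le\tau+1$, we choose $m$ along a subsequence where $M(m,\varepsilon)\le(\tau+2)m$. This bounds the gap proportion by $(\tau+2)/(\tau+3)<1$ uniformly.
\item Since this proportion alone does not beat the uncontrolled gap contribution, we instead use the $\tau$-attainment by exploiting minimality. If the uncontrolled proportion could always drive the average to $\nu$, a contradiction argument would produce an admissible gap function with a smaller $\tau$, or the existence of points realizing $\nu$ too strongly.
\end{itemize}

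Concretely, we first attempt to pick, for the finitely many sub-block lengths used, segments of $x_\mu$ that already start close to the required endpoints. If this works, the gaps can be filled by orbit pieces whose averages we control. This is the step we expect to be delicate.

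For DC1 between $z_\xi$ and $z_{\xi'}$, we use the standard two-type stages. At stages where $\xi$ and $\xi'$ agree, both points shadow the same segment of $x_\mu$ for a time dominating the past, so the proportion of time the two orbits are $\varepsilon$-close tends to $1$. At stages where $\xi$ and $\xi'$ disagree, one point shadows $x_\mu$ and the other shadows a shifted or alternative segment, chosen so that the two orbits stay $t_0$-apart most of the time. Both alternatives still respect the constraint $\varphi>\int\varphi\,d\nu+2\eta$. These stages also dominate the past, so the liminf of the close-time proportion is $0$.

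Finally, we take nested closed sets of shadowing points with $\varepsilon_k\to0$ and pass to the intersection. This shows that each $z_\xi$ lies in $U\cap\mathrm{Trans}(f)\cap(G^\mu\setminus G^\nu)$ and that the family $\{z_\xi\}$ is DC1-scrambled.
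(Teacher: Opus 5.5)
Your proposal has a genuine gap at the step you call delicate, which is the heart of the theorem. None of your proposed remedies settles it. The gap segments in non-uniform specification are genuinely uncontrolled, so they cannot be ``filled by orbit pieces whose averages we control'': shadowing inside a gap needs further gaps. The contradiction argument ``producing an admissible gap function with smaller $\tau$'' is not an argument, because failure of an average estimate gives no mechanism for improving $M$. The $\tau$-attainment is only needed to fix one admissible $M$ with $\tau(M)=\tau<\infty$. Chopping the $x_\mu$-blocks into sub-blocks with gap fraction up to $(\tau+2)/(\tau+3)$ also loses $\mu\in V_f(z_\xi)$, since the stage empirical measures then carry a non-negligible uncontrolled part.

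What is missing is the right choice of $\varphi$, which comes from mutual singularity. Distinct ergodic measures have $\rho_{\mathsf{tv}}=1$, and this persists for convex combinations (Proposition~\ref{pre-tv-prop1}). So by Lemma~\ref{pre-controlled-lemma2} and compactness there are finitely many continuous $\varphi_j$ with $\inf\varphi_j=0$ and $\sup\varphi_j=1$ such that every $\hat\mu$ near $\mathrm{co}\{\mu,\lambda\}$ satisfies $\int\varphi_j\,d\nu-\int\varphi_j\,d\hat\mu>\frac{\tau}{1+\tau}+3(1+\tau)\eta$ for some $j$. Let $\hat\mu$ be the empirical measure of the shadowed part of $[0,n)$ and $g$ the uncontrolled fraction. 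Even if $\varphi_j\equiv1$ on every gap, the average over $[0,n)$ is at most $(1-g)\int\varphi_j\,d\hat\mu+g+\eta$. This stays below $\int\varphi_j\,d\nu-\eta$ once $g$ is at most about $\tau/(1+\tau)$.

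That bound on $g$ must hold for \emph{every} $n$, not just at stage ends. You get it by making each shadowed block much longer than everything before it. You also choose each block length along subsequences where $M(\ell,\varepsilon)/\ell$ is within $2\theta$ of $\tau$. The same applies to the length of the whole prefix re-shadowed at precision $\varepsilon_{k+1}$. For two ergodic measures the available contrast is essentially $1$, so even your bound $g\le(\tau+2)/(\tau+3)$ would suffice. That requires choosing $\varphi$ this way and taking the long blocks themselves of good lengths rather than chopping them.

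The DC1 half has a second gap. At disagreeing stages you need two shadowed orbits that are $t_0$-apart on a fraction of the window tending to $1$. Their empirical measures must also stay near measures singular to $\nu$, so that the estimate above still applies. A shifted segment of $x_\mu$ does not give this; take $\mu$ a Dirac measure at a fixed point. Two far-apart points of a support say nothing about orbits. Generic orbits of two distinct measures are, by Lemma~\ref{pre-metric-lemma2}, only forced apart on a positive fraction of time, not an asymptotically full one.

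The paper secures exactly this ingredient first. By Theorem~\ref{distal-thm5} (uncountably many ergodic measures with non-degenerate minimal support, via a Sturmian coding) and Lemma~\ref{distal-lemma2}, there is an ergodic $\lambda\notin\{\mu,\nu\}$ with a distal pair $\{p_0,p_1\}\subset G_\lambda$. Then $V_f(p_0)=V_f(p_1)=\{\lambda\}$, and $\mathrm{co}\{\mu,\lambda\}$ is at total variation distance $1$ from $\nu$. Theorem~\ref{thm-DC1-2} then follows from Theorem~\ref{thm-DC1-3}. At each stage $k$, that construction concatenates three kinds of pieces, all with gaps tuned as above:
\begin{itemize}
\item dominating blocks shadowing $p_{\xi_1},\dots,p_{\xi_k}$, which give separation;
\item finitely many transitivity points;
\item dominating blocks shadowing a $\mu$-generic point, which give closeness and $\mu\in V_f$.
\end{itemize}
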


The following result shows that the assumption $\tau<\infty$ in Theorem~\ref{thm-DC1-2} cannot be removed.

\begin{maintheorem}\label{thm-example-1}
	There exists a dynamical system $(X,f)$ satisfying the non-uniform $\infty$-specification property for which there exist two distinct ergodic measures $\mu,\nu\in\mathcal{M}_f^{erg}(X)$ such that $G^\mu\setminus G^\nu$ has no DC1-scrambled pairs.
\end{maintheorem}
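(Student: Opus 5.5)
The plan is to build a one-sided subshift on $\{0,1\}$. In it, every maximal run of $0$'s of length $n$ that is eventually terminated must be followed by a run of $1$'s of length at least $g(n)$, where $g$ grows superlinearly; I take $g(n)=n^2$. The two measures will be $\mu=\delta_{0^\infty}$ and $\nu=\delta_{1^\infty}$. The point of the construction is that in such a system $G^\mu\setminus G^\nu$ consists only of points that are eventually $0^\infty$. Any two of those are asymptotic, so no pair in it can be DC1-scrambled.

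\textbf{Step 1 (the system).} Let $\mathcal{L}$ be the set of all finite subwords of concatenations of blocks $0^{n}1^{m}$ with $n\ge 0$ and $m\ge g(n)$. Let $X\subset\{0,1\}^{\mathbb{N}_0}$ be the subshift whose language is $\mathcal{L}$, and let $f$ be the shift. Then $X$ is compact and non-degenerate and contains $0^\infty$ and $1^\infty$. Both $\delta_{0^\infty}$ and $\delta_{1^\infty}$ are ergodic. The only constraint in $\mathcal{L}$ is on what follows a completed $0$-run. In particular, a word may begin with a $0$-run of any length, since it may be a suffix of a longer run, and $1$'s may be inserted freely.

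\textbf{Step 2 (non-uniform specification with $\tau=\infty$).} Fix $\varepsilon>0$ and choose $k=k(\varepsilon)$ so that agreement on $k$ coordinates gives distance $\le\varepsilon$. To shadow the orbit segment of $x_i$ on $[a_i,b_i]$ it suffices to copy the word $w_i=x_i[a_i,\,b_i+k]$, which has length $n+k$ with $n=b_i-a_i+1$. Between $w_i$ and $w_{i+1}$ I insert $1^{g(n+k)+1}$. This closes any $0$-run that ends $w_i$ and meets its requirement. It also makes $w_{i+1}$ legal after the insertion: an initial $0$-run of $w_{i+1}$ is now preceded by a $1$, and an initial $1$-run of $w_{i+1}$ simply merges with the inserted $1$'s. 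After the last word, appending $1^\infty$ gives a point $z\in X$. Hence $M(n,\varepsilon)=g(n+k(\varepsilon))+k(\varepsilon)+1$ is an admissible gap function, and $\tau(M)=\infty$.

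For the reverse inequality, let $M'$ be any admissible gap function. Apply specification to $x_1=0^\infty$ on a window of length $n$ and $x_2=1^\infty$, with $\varepsilon$ small enough that shadowing forces agreement of symbols. The shadowing point then contains $0^{n}$ followed, inside the gap, by a $1$. The run containing this $0^n$ has length at least $n$, so the gap must contain at least $g(n)$ ones, giving $M'(n,\varepsilon)\ge g(n)$. Thus $\tau(M')=\infty$ for every $M'\in\mathscr{G}(X,f)$, and the system has the non-uniform $\infty$-specification property.

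\textbf{Step 3 (structure of $G^\mu\setminus G^\nu$; the main obstacle).} Let $x\in G^\mu$ be a point that is not eventually $0^\infty$; I will show $\nu\in V_f(x)$. Choose $N_j\to\infty$ with the frequency of $0$'s in $x[0,N_j)$ tending to $1$. Every $0$-run completed before $N_j$, of length $n$, is followed by at least $g(n)\ge n$ ones, and this tail lies inside $[0,N_j)$ unless the run is the last one. So completed runs other than the last contribute at least as many $1$'s as $0$'s. If the zero frequency is $\ge 1-\eta$, the zeros lying in completed, fully followed runs number at most $\eta N_j$. Hence the final $0$-run that reaches or nearly reaches $N_j$ has length $L_j\ge(1-3\eta)N_j$. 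Because $x$ is not eventually $0$, this run terminates and is followed by at least $g(L_j)\ge cN_j^2$ ones. At time $N_j+g(L_j)$ the frequency of $1$'s is therefore $\to 1$, so $\nu=\delta_{1^\infty}\in V_f(x)$.

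It follows that $G^\mu\setminus G^\nu$ is contained in the set of points that are eventually $0^\infty$. This set is non-empty: $0^\infty$ itself lies in $G^\mu\setminus G^\nu$. The delicate bookkeeping is the boundary case of the last run, including runs straddling $N_j$ whose required $1$-tail is only partially inside the window. I expect this to need a careful but elementary counting argument.

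\textbf{Step 4 (conclusion).} For any $x,y$ that are eventually $0^\infty$, we have $d(f^jx,f^jy)\to0$. So for every $t_0>0$,
\[
\liminf_{n\to\infty}\frac1n\bigl|\{j\in[0,n-1]:d(f^jx,f^jy)<t_0\}\bigr|=1.
\]
Hence the second DC1 condition fails for every pair, and $G^\mu\setminus G^\nu$ contains no DC1-scrambled pair.
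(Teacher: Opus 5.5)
You use the paper's system with the symbols $0$ and $1$ swapped. The paper forbids $1^s0^t1$ with $t<s^2$ and takes $\mu=\delta_{1^\infty}$, $\nu=\delta_{0^\infty}$. Your Steps 3--4 are its argument: all runs but the last are balanced, so the last run has length $L_j\ge(1-2\eta)N_j$. The run of length $\ge L_j^2$ that must follow it then drives the empirical measure to the other fixed point. The boundary case you flag is harmless. Let $L_j$ be the full length of the last $0$-run, which may extend past $N_j$. At the end $T_j$ of the following $1$-run, the frequency of $1$'s is at least $L_j^2/(N_j+L_j+L_j^2)\to1$.

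You depart from the paper in Step 2. The paper simply quotes the non-uniform $\infty$-specification property from \cite[\S4.1]{Lin-Tian-Yu-2025+}, whereas you prove it, and your lower bound fails. Shadowing $x_2=1^\infty$ at time $a_2$ forces $z_{a_2}=1$, but it never forces the $1$-run after the initial $0^n$ to end. Your constraint $m\ge g(n)$ only applies to a $1$-run that is followed by a $0$. Indeed, $z=0^{n+k}1^\infty\in X$ shadows $0^\infty$ on $[0,n-1]$ and $1^\infty$ at every time $a_2\ge n+k$. So your configuration only forces a gap of about $k+1$.

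This is not a side issue. Any gap function dominating an admissible one is admissible, so exhibiting some $M$ with $\tau(M)=\infty$ is trivial. The real content of ``non-uniform $\infty$-specification'' is that $\tau(M')=\infty$ for every $M'\in\mathscr{G}(X,f)$.

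The repair is to force a $0$ after the $1$. Take $x_1=0^\infty$, $x_2=10^\infty$, $\varepsilon<1/2$, $a_1=0$, $b_1=n-1$ and $a_2=b_2=b_1+M'(n,\varepsilon)$. Then $z[0,n)=0^n$ and $z_{a_2}z_{a_2+1}=10$. So the initial $0$-run of $z$ has some length $L$ with $n\le L\le a_2$. The $1$-run starting at position $L$ is followed by a $0$ no later than position $a_2+1$. Hence it lies in $[L,a_2]$ and, by the defining constraint, has length at least $L^2$. This gives $a_2\ge L^2+L-1\ge n^2+n-1$, i.e. $M'(n,\varepsilon)\ge n^2$, and so $\tau(M')=\infty$.

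There are also two minor points in your upper bound. With the convention $d(f^{n-a_i}x_i,f^nz)\le\varepsilon$, the word to copy is $x_i[0,\,b_i-a_i+k)$ rather than $x_i[a_i,b_i+k]$. You should also fill $z[0,a_1)$ with $1$'s. Neither affects the admissibility of your $M$.
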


Define the \emph{total variation distance} $\rho_{\mathsf{tv}}$ on $\mathcal{M}(X)$ as
\begin{equation*}
	\rho_{\mathsf{tv}}(\mu,\nu)=\sup_{B\in\mathfrak{B}(X)}|\mu(B)-\nu(B)|.
\end{equation*}
Clearly $0\leq\rho_{\mathsf{tv}}(\mu,\nu)\leq 1$.
A pair of points $\{p_0,p_1\}\subset X$ is called \emph{distal} if
\begin{equation*}
	\inf_{n\geq0} d(f^np_0,f^np_1)>0.
\end{equation*}

The following result, which is one of the main results, answers Question~\ref{question} under some additional conditions.
We will provide an example that satisfies these conditions in Section~\ref{Sect-example}.

\begin{maintheorem}\label{thm-DC1-3}
	Let $\tau\in[0,\infty)$.
	Suppose that $(X,f)$ satisfies the non-uniform $\tau$-specification property.
	Let $K,L$ be two (disjoint) non-empty closed sets in $\mathcal{M}_f(X)$ and $\{p_0,p_1\}$ a distal pair of $X$ satisfying the following conditions:
	\begin{enumerate}
		\item $K$ can be written as a union $K=\bigcup_{i\in I}K_i$, where each $K_i$ is a non-empty connected closed set with $G_{K_i}\neq\varnothing$;
		\item for any $\mu\in\overline{\mathrm{co}}(K\cup V_f(p_0)\cup V_f(p_1))$ and any $\nu\in L$, one has
		\begin{equation*}
			\rho_{\mathsf{tv}}(\mu,\nu)>\frac{\tau}{1+\tau}.
		\end{equation*}
	\end{enumerate}
	Then for any non-empty open set $U\subset X$, there exists an uncountable DC1-scrambled set contained in
	\begin{equation*}
		G^K\cap U\cap\mathrm{Trans}(f)\setminus\bigcup_{\nu\in L}G^{\nu}.
	\end{equation*}
\end{maintheorem}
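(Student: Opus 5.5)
We prove Theorem~\ref{thm-DC1-3} by a single orbit-gluing construction under non-uniform specification. The orbits are made of long blocks shadowing generic points of the $K_i$ (giving $V_f\supset K$), alternated with long blocks shadowing $p_0$ or $p_1$ according to a binary coding; these coding blocks produce DC1 behaviour.

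\textbf{Construction.} Fix an admissible gap function $M$ with $\tau(M)=\tau$. Since $K$ is closed in a compact metrizable space, choose a countable dense sequence $\{\mu_j\}\subset K$. Each $\mu_j$ lies in some $K_{i_j}$, and we pick a point $x_j\in G_{K_{i_j}}$; then $\mu_j$ is an accumulation point of $\delta_{x_j}^n$. Fix a transitive point, used to pass through $U$ at the start and to visit a dense set of points so that the result lies in $\mathrm{Trans}(f)$. Take an uncountable family $\Xi\subset\{0,1\}^{\mathbb N}$ such that any two distinct sequences differ on infinitely many coordinates and also agree on infinitely many coordinates; a standard Cantor-type family works.

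For $\xi\in\Xi$, concatenate stages with rapidly increasing lengths $N_1\ll N_2\ll\cdots$. Stage $k$ contains:
\begin{itemize}
	\item a segment of $x_{j(k)}$ of length $n_k$, chosen so that $\delta^{n_k}_{x_{j(k)}}$ is within $1/k$ of $\mu_{j(k)}$, with every $\mu_j$ visited infinitely often;
	\item a long segment of the orbit of $p_{\xi_k}$;
	\item a segment common to all $\xi$, namely a fixed orbit, used to produce long shared stretches.
\end{itemize}
Consecutive segments are joined by gaps of length $M(\cdot,\varepsilon_k)$ with $\varepsilon_k\to0$. Taking a nested-closed-sets limit gives a point $z_\xi$.

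The stage lengths are chosen so that each new segment dwarfs everything before it. Then the empirical measure at the end of each $K$-segment is close to $\mu_{j(k)}$, so $V_f(z_\xi)\supset K$. The DC1 pair conditions come from the two kinds of stages. When $\xi,\eta$ agree at $k$ (or on the common stages), the two orbits stay close on a density-one portion of a huge window, so the $\limsup$ equals $1$ for every $t$. When $\xi_k\neq\eta_k$, the orbits shadow $p_0$ and $p_1$ on a density-one window, and distality gives $d\ge t_0:=\tfrac12\inf_n d(f^np_0,f^np_1)$ there, so the $\liminf$ equals $0$.

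\textbf{Excluding $G^\nu$ for $\nu\in L$.} This is the main obstacle. The gaps filled by specification have uncontrolled orbits, and their total proportion can be as large as about $\tau/(1+\tau)$ of any initial segment. Indeed, a block of length $n$ is followed by a gap of roughly $\tau n$ (we may use $M(n,\varepsilon)\le(\tau+\delta)n$ along a suitable subsequence of $n$; the block lengths are chosen in that subsequence). Consequently, at every time $n$,
\[
\delta^n_{z_\xi}=(1-\beta_n)\,m_n+\beta_n\,r_n,
\]
where $m_n$ is close to $\overline{\mathrm{co}}(K\cup V_f(p_0)\cup V_f(p_1))$, $r_n\in\mathcal M(X)$ is arbitrary, and $\limsup_n\beta_n\le\frac{\tau+\delta}{1+\tau+\delta}$. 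We should also ensure that $m_n$ stays close to this convex hull even at intermediate times within a block; the partial orbits of $x_j$ and $p_i$ must be handled by compactness of $V$-sets and a uniform approximation.

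Now suppose $\nu\in V_f(z_\xi)$. Passing to a subsequence, $\nu=(1-\beta)m+\beta r$ with $m$ in the convex hull and $\beta\le\frac{\tau+\delta}{1+\tau+\delta}$. Hence $\rho_{\mathsf{tv}}(m,\nu)\le\beta$. By compactness of $L$ and of the hull, and lower semicontinuity of $\rho_{\mathsf{tv}}$ in the weak* topology, condition (2) gives a uniform margin $\rho_{\mathsf{tv}}>\frac{\tau}{1+\tau}+\gamma$. Choosing $\delta$ small makes this a contradiction. The delicate points are:
\begin{itemize}
	\item the weak* limit of a decomposition must still respect the total variation bound, which holds because $\rho_{\mathsf{tv}}$ is weak* lower semicontinuous;
	\item the block lengths must be controlled so that the gap fraction never exceeds the bound at any time, not only at block ends.
\end{itemize}
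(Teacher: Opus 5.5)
\textbf{Exclusion step: correct, and a different route from the paper.} The paper dualizes condition~(2). Lemma~\ref{pre-controlled-lemma2} gives $[0,1]$-valued witnesses $\varphi$ with $\int\varphi\,\mathrm{d}\nu-\int\varphi\,\mathrm{d}\mu>\tau/(1+\tau)$. Compactness of $\widetilde K\times L$, where $\widetilde K=\overline{\mathrm{co}}(K\cup V_f(p_0)\cup V_f(p_1))$, reduces these to finitely many $\varphi_1,\dots,\varphi_m$, with a Lebesgue number $\kappa$ and a margin $\eta$. Seven cases then check $\int\varphi_r\,\mathrm{d}\delta^n_{x_\xi}<\int\varphi_r\,\mathrm{d}\nu-\eta$ for every large $n$, via \eqref{eq-thm-DC1-3-proof-5}. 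You instead pass to a limit decomposition $\nu=(1-\beta)m+\beta r$ and use $\rho_{\mathsf{tv}}(m,\nu)=\beta\,\rho_{\mathsf{tv}}(m,r)\le\beta$. The uniform margin comes from lower semicontinuity of $\rho_{\mathsf{tv}}$, a supremum of $\int\varphi\,\mathrm{d}\mu-\int\varphi\,\mathrm{d}\nu$ over continuous $0\le\varphi\le1$. The same openness argument also covers $m$ lying only near $\widetilde K$. Your version is cleaner and needs no separate case for $\tau=0$; the paper's is more explicit but carries the same content. Both rest on one input: at every large time $n$, $\delta^n$ puts weight at least about $1/(1+\tau)$ on a measure near $\widetilde K$.

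\textbf{Where the construction, as written, fails.} It does not deliver that input, for three reasons.

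(a) A nested-closed-sets limit is incompatible with $\varepsilon_k\to0$. Specification is applied with a single $\varepsilon$, so enforcing precision $\varepsilon_k$ at stage $k$ would require all earlier gaps to be at least $M(\cdot,\varepsilon_k)$. Keeping $\varepsilon$ fixed instead only yields $\limsup=1$ for $t>2\varepsilon$. The paper builds a Cauchy sequence, re-gluing the whole segment $[0,t_{kk}]$ of the previous point as one block at precision $\varepsilon_{k+1}$. The gap opening stage $k+1$ is then $M(t_{kk}+1,\varepsilon_{k+1})$, which depends on the total elapsed time rather than on the last block. So $t_{kk}$ must be chosen along a sequence where this gap is at most about $\tau(t_{kk}-s_{kk})$; see \eqref{eq-thm-DC1-3-proof-11}.

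(b) You require the single length $n_k$ of your $x_{j(k)}$-segment (the paper's $y_j$) to lie in two sequences at once: one where $\delta^{n}_{x_{j(k)}}\approx\mu_{j(k)}$, and one where $M(n,\varepsilon)\le(\tau+\delta)n$. Nothing forces these to meet. If the block ends at a bad length, the following gap $M(n_k,\varepsilon)$ can dwarf $\tau n_k$ and destroy the $\tau/(1+\tau)$ bound. The paper decouples the two requirements. It detects $\mu_j$ at the interior time $s_{kj}+n_j(l_{kj})$, with $n_j(l_{kj})>s_{kj}^2$, and chooses the block end $t_{kj}$ afterwards to satisfy the gap condition. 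The tail of the block stays near $K_{i(j)}$ because $y_j\in G_{K_{i(j)}}$; this is exactly where hypothesis~(1) is used, with saturated rather than over-saturated sets.

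(c) Your extra common ``fixed orbit'' segment must also have all its long partial empirical measures near $\widetilde K$; otherwise it can push $V_f$ into $L$. The simplest fix is to drop it: the $K$-segments are already common to all $\xi$ and already give the $\limsup=1$ property, as in the paper.

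A smaller point: block start times must exceed $\theta^{-1}N_j$, where $N_j$ is the time after which the empirical measures of the shadowed point stay near $\widetilde K$. This makes the short initial piece of each block negligible.
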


\begin{remark}
	Using Theorem~\ref{thm-DC1-3}, one can easily see that the conclusion of Theorem~\ref{thm-DC1-2} can be extended to any $m+n$ distinct ergodic measures $\mu_1,\cdots,\mu_m,\nu_1,\cdots,\nu_n$.
	See Theorem~\ref{DC1-thm2}.
\end{remark}

The following example shows that the bound $\tau/(1+\tau)$ in condition~(2) of Theorem~\ref{thm-DC1-3} is optimal.

\begin{maintheorem}\label{thm-example-2}
	For any $0<\tau<\infty$, there exists a dynamical system $(X,f)$ with the non-uniform $\tau$-specification property such that
	\begin{enumerate}
		\renewcommand{\labelenumi}{(\theenumi)}
		\item there exist two disjoint non-empty compact subsets $K,L\subset\mathcal{M}_f(X)$ and a distal pair $\{p_0,p_1\}\subset X$ satisfying condition~(1) of Theorem~\ref{thm-DC1-3};
		\item for any $\mu\in\overline{\mathrm{co}}(K\cup V_f(p_0)\cup V_f(p_1))$ and $\nu\in L$ such that
		\begin{equation*}
			\rho_{\mathsf{tv}}(\mu,\nu)\geq\frac{\tau}{1+\tau},
		\end{equation*}
		and moreover, equality holds for some $(\mu,\nu)\in \overline{\mathrm{co}}(K\cup V_f(p_0)\cup V_f(p_1))\times L$;
		\item $G^K\setminus\bigcup_{\nu\in L}G^{\nu}$ has no DC1-scrambled pairs.
	\end{enumerate}
\end{maintheorem}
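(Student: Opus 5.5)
This is an existence statement, so the plan is to construct an explicit example. I will look for a symbolic system whose non-uniform gap function grows like $\tau n$. The family I have in mind is a coded or $\beta$-shift-like subshift. Blocks of length $n$ may be freely concatenated only after a gap of roughly $\tau n$ symbols, and the gap symbols are forced to be a fixed marker, say $0$. Concretely, let $X$ be the closure of all sequences over $\{0,1,2\}$ that are concatenations of words $w\,0^{m}$, with $w$ a word over $\{1,2\}$ (or any word) and $m\geq\lceil\tau|w|\rceil$.

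I would first verify the non-uniform specification property with $M(n,\varepsilon)\approx\tau n+C(\varepsilon)$. Then I would show that no admissible gap function does better, i.e.\ $\tau(M')\geq\tau$. The lower bound comes from the observation that a shadowing orbit must pass through at least $\tau n$ zeros after any block of $n$ non-zero symbols.

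Next I choose the data. Let $p_0=0^\infty$, which is fixed, and let $p_1$ be a point mostly equal to $0$ but distal from $p_0$. If that fails, I take a periodic point of the form $(1\,0^{\lceil\tau\rceil})^\infty$ shifted appropriately, checking distality directly. Let $K=\{\delta_{0^\infty}\}$ or a small set of measures supported on the zero-heavy part, so that condition~(1) holds via periodic points. Let $L=\{\nu\}$, where $\nu$ is carried by sequences with a maximal proportion of non-zero symbols, namely the measure on the periodic orbit of $w\,0^{\lceil\tau|w|\rceil}$ with $|w|\to$ large, or a limit of such measures. Because every $\mu\in\overline{\mathrm{co}}(K\cup V_f(p_0)\cup V_f(p_1))$ gives the cylinder $[0]$ mass close to $1$, while $\nu([0])=\tau/(1+\tau)$ at minimum, I would compute $\rho_{\mathsf{tv}}(\mu,\nu)\geq\mu([0])-\nu([0])$. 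I will tune the choice so that this is at least $\tau/(1+\tau)$, with equality attained by some pair; for instance $\nu$ could give $[0]$ mass exactly $1/(1+\tau)$, with the other symbols forming its complement.

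The hard step is (3): showing that $G^K\setminus\bigcup_{\nu\in L}G^\nu$ has no DC1-scrambled pairs. The idea is that a point accumulating on $\nu$ would need its non-zero symbols to be packed at the maximal density allowed by the $\tau$-gaps. Suppose $x,y\in G^K$ form a DC1 pair, so that along a subsequence they agree on a density-one set of times. Since both orbits must spend density near $1$ in zeros along the times realizing $K$, I would argue that the only way for the pair to separate with positive density (the liminf condition) is for one of them to contain long non-zero blocks at maximal density. I would then show, using the rigidity of the forced gaps $0^{\ge\tau|w|}$, that the resulting empirical measures must converge to $\nu$, contradicting $x\notin G^\nu$. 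The delicate part is this counting argument: at scale $n$, the density of non-zero symbols is at most $1/(1+\tau)$, and attaining separation density near $1$ forces this bound to be saturated, which in turn forces accumulation at $\nu$. I expect to have to tweak the definitions of $L$, or of the symbol alphabet, so that saturation really pins down $\nu$ and not merely its marginal on $[0]$.
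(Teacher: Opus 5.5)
There is a genuine gap. You have the roles of $K$ and $L$ reversed, and with $K=\{\delta_{0^\infty}\}$ statement (3) is false for \emph{every} compact $L$ disjoint from $K$, so no tweaking of $L$ can repair it. To see this, choose $r>0$ with $\mathcal{B}(\delta_{0^\infty},r)\cap L=\varnothing$. Then choose $m\geq\lceil\tau\rceil$ so large that the periodic measure $\lambda_m$ on the orbit of $q_0=(10^m)^\infty$ satisfies $\rho(\lambda_m,\delta_{0^\infty})<r$; by convexity the whole segment $\mathrm{co}\{\delta_{0^\infty},\lambda_m\}$ lies in this ball. The pair $q_0$, $q_1=f(q_0)$ satisfies $d(f^iq_0,f^iq_1)\geq 2^{-m}$ for all $i$. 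For $\xi\in\{0,1\}^{\mathbb{N}}$, let $x_\xi$ alternate zero blocks with blocks on which its $i$-th symbol equals that of $q_{\xi_j}$ (with $j=1,\dots,k$ in round $k$), each block much longer than everything before it. These concatenations are admissible, since each $1$ is followed by $m\geq\tau$ zeros. Moreover $\delta_{0^\infty}\in V_f(x_\xi)\subset\mathrm{co}\{\delta_{0^\infty},\lambda_m\}$, so $x_\xi\in G^K\setminus\bigcup_{\nu\in L}G^{\nu}$, and distinct $x_\xi,x_\zeta$ are DC1-scrambled. This also refutes the heuristic in your last paragraph. Separation with density one does not require saturating the non-zero density $1/(1+\tau)$: phase-shifted copies of an arbitrarily sparse periodic orbit are separated at every time. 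Zero-heavy behaviour forces nothing. The gap rule forces in one direction only: a long non-zero block must be followed by a proportionally long zero block.

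That one-directional forcing is the missing idea, and it dictates the paper's choice. The paper defines $X$ by forbidding $1^s0^t1$ with $t<\tau s$, and takes $K=\{\delta_{1^\infty}\}$, $L=\{\nu\}$ with $\nu=\frac{1}{1+\tau}\delta_{1^\infty}+\frac{\tau}{1+\tau}\delta_{0^\infty}$, $p_0=(10^{\lceil\tau\rceil})^\infty$, $p_1=f(p_0)$, and $\lambda$ the periodic measure of $p_0$. Suppose $x\in G^{\delta_{1^\infty}}$ is not eventually $1^\infty$, and take times $n_k$ with $\delta_x^{n_k}\to\delta_{1^\infty}$. At these times the orbit is in a $1$-block of length $u_k$ preceded by only $a_k=o(u_k)$ symbols, and this block is followed by at least $\tau u_k$ zeros, so $\delta_x^{a_k+u_k+\lceil\tau u_k\rceil}\to\nu$. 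Hence $G^K\setminus G^{\nu}=\bigcup_{j\geq0}f^{-j}\{1^\infty\}$, whose points are pairwise eventually equal, and (3) follows with no counting argument on pairs. Your computation for (2) must also change. Your bound $\mu([0])-\nu([0])$ is about $1/(1+\tau)$, not $\tau/(1+\tau)$, and your hull need not give $[0]$ mass near $1$. The paper instead uses the atom $\{0^\infty\}$. It has mass $0$ under every measure in $\mathrm{co}\{\delta_{1^\infty},\lambda\}$ and mass $\tau/(1+\tau)$ under $\nu$, with equality at $(\delta_{1^\infty},\nu)$. Your outline for verifying the non-uniform $\tau$-specification property is fine; the paper simply cites it.
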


\subsection*{Organization of this paper}

In Section~\ref{Sect-pre}, we will introduce some basic notions and preliminary results.
In Section~\ref{Sect-distal}, we will find distal pairs under non-uniform specification, which will be used in the proofs of the main results.
In Section~\ref{Sect-DC1}, we will prove Theorem~\ref{thm-DC1-1}, Corollary~\ref{cor-DC1}, Theorem~\ref{thm-DC1-2} and Theorem~\ref{thm-DC1-3}.
In Section~\ref{Sect-example}, we will prove Theorem~\ref{thm-example-1} and Theorem~\ref{thm-example-2}, and provide an example with non-uniform specification which satisfies conditions of Theorem~\ref{thm-DC1-3}.

\section{Preliminaries}\label{Sect-pre}

\subsection{The first Wasserstein metric}

Let $\rho$ be the first Wasserstein metric on $\mathcal{M}(X)$, which metricizes the weak$^\ast$ topology on $\mathcal{M}(X)$, see \cite{Villani-2009} for more information.
According to \cite[p.~95, (6.3)]{Villani-2009}, for any $\mu,\nu\in\mathcal{M}(X)$, $\rho(\mu,\nu)$ can be represented by
\begin{equation}\label{eq-pre-metric-1}
	\rho(\mu,\nu)=\sup_{\varphi\in\mathsf{Lip}^1(X)}\left|\int_X\varphi\mathrm{d}\mu-\int_X\varphi\mathrm{d}\nu\right|,
\end{equation}
where $\mathsf{Lip}^1(X)$ is the space of real-valued Lipschitz functions on $X$ with Lipschitz constant at most $1$.
Then for any $x,y\in X$, we have
\begin{equation}\label{eq-pre-metric-2}
	\rho(\delta_x,\delta_y)=d(x,y).
\end{equation}
Let $\mathcal{B}(\mu,\varepsilon)$ denote the open ball in $\mathcal{M}(X)$ with respect to the metric $\rho$.

The following proposition can be easily checked by using \eqref{eq-pre-metric-1}, hence we omit the proof.

\begin{proposition}\label{pre-metric-prop1}
	Let $\mu,\mu_1,\cdots,\mu_n,\nu_1,\cdots,\nu_n\in\mathcal{M}(X)$ and $0\leq s_1,\cdots,s_n\leq 1$ with $s_1+\cdots+s_n=1$, then
	\begin{equation}\label{eq-pre-metric-prop1-1}
		\rho\bigg(\mu,\sum_{i=1}^ns_i\mu_i\bigg)\leq\sum_{i=1}^ns_i\rho(\mu,\mu_i)
	\end{equation}
	and
	\begin{equation}\label{eq-pre-metric-prop1-2}
		\rho\bigg(\sum_{i=1}^ns_i\mu_i,\sum_{i=1}^ns_i\nu_i\bigg)\leq\sum_{i=1}^ns_i\rho(\mu_i,\nu_i).
	\end{equation}
\end{proposition}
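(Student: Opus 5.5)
Both inequalities follow directly from the dual representation \eqref{eq-pre-metric-1} of $\rho$, using only that integration is linear in the measure and that a supremum of a sum is at most the sum of the suprema. We prove \eqref{eq-pre-metric-prop1-2} first and then get \eqref{eq-pre-metric-prop1-1} as a special case.

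For \eqref{eq-pre-metric-prop1-2}, fix $\varphi\in\mathsf{Lip}^1(X)$. By linearity of $\mu\mapsto\int_X\varphi\,\mathrm{d}\mu$,
\begin{equation*}
	\left|\int_X\varphi\,\mathrm{d}\Big(\sum_{i=1}^ns_i\mu_i\Big)-\int_X\varphi\,\mathrm{d}\Big(\sum_{i=1}^ns_i\nu_i\Big)\right|
	=\left|\sum_{i=1}^ns_i\Big(\int_X\varphi\,\mathrm{d}\mu_i-\int_X\varphi\,\mathrm{d}\nu_i\Big)\right|.
\end{equation*}
Since every $s_i\geq0$, the triangle inequality bounds the right-hand side by
\begin{equation*}
	\sum_{i=1}^ns_i\left|\int_X\varphi\,\mathrm{d}\mu_i-\int_X\varphi\,\mathrm{d}\nu_i\right|.
\end{equation*}
By \eqref{eq-pre-metric-1}, the $i$-th term is at most $\rho(\mu_i,\nu_i)$, so the whole expression is at most $\sum_{i=1}^ns_i\rho(\mu_i,\nu_i)$. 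This bound does not depend on $\varphi$. Taking the supremum over $\varphi\in\mathsf{Lip}^1(X)$ and applying \eqref{eq-pre-metric-1} once more gives \eqref{eq-pre-metric-prop1-2}.

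For \eqref{eq-pre-metric-prop1-1}, use $s_1+\cdots+s_n=1$ to write $\mu=\sum_{i=1}^ns_i\mu$. Then apply \eqref{eq-pre-metric-prop1-2} with $\mu_i$ replaced by $\mu$ for every $i$ and $\nu_i$ replaced by $\mu_i$. This yields $\rho\big(\mu,\sum_i s_i\mu_i\big)\leq\sum_i s_i\rho(\mu,\mu_i)$, which is \eqref{eq-pre-metric-prop1-1}.

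The argument has no real obstacle. The only points to check are that the convex-combination measures are again Borel probability measures, so $\rho$ is defined on them, and that the integrals are finite. Both hold because Lipschitz functions on the compact space $X$ are bounded.
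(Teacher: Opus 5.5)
Your proposal is correct and follows the approach the paper indicates: the paper omits the proof, saying only that it follows easily from the dual representation \eqref{eq-pre-metric-1}, which is exactly what you use. Your derivation of \eqref{eq-pre-metric-prop1-1} from \eqref{eq-pre-metric-prop1-2} by writing $\mu=\sum_i s_i\mu$ is valid.
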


\begin{lemma}\cite[Lemma~2.2]{Lin-Tian-Yu-2024}\label{pre-metric-lemma2}
	Given $0<\varepsilon,\delta\leq 1$.
	Let $\{x_i\}_{i=0}^{n-1},\{y_i\}_{i=0}^{n-1}\subset X$.
	If $|\{i\in[0,n-1]:d(x_i,y_i)\leq\varepsilon\}|\geq(1-\delta)n$, then
	\begin{equation}\label{eq-pre-metric-lemma2}
		\rho\bigg(\frac1n\sum_{i=0}^{n-1}\delta_{x_i},\frac1n\sum_{i=0}^{n-1}\delta_{y_i}\bigg)\leq\varepsilon+\delta\cdot\mathrm{diam}(X),
	\end{equation}
	where $\mathrm{diam}(X)=\sup_{x,y\in X}d(x,y)$ is the diameter of $(X,d)$.
\end{lemma}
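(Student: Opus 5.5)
The statement is Lemma~\ref{pre-metric-lemma2}. The plan is to compare the two empirical measures termwise through the dual representation \eqref{eq-pre-metric-1}. We do not need to construct a coupling; we bound the difference of integrals against every $1$-Lipschitz test function directly. Write
\begin{equation*}
	\mu_n=\frac1n\sum_{i=0}^{n-1}\delta_{x_i},\qquad \nu_n=\frac1n\sum_{i=0}^{n-1}\delta_{y_i}.
\end{equation*}
Let $\Lambda=\{i\in[0,n-1]:d(x_i,y_i)\leq\varepsilon\}$, so by hypothesis $|\Lambda^c|\leq\delta n$.

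Fix $\varphi\in\mathsf{Lip}^1(X)$. Then
\begin{equation*}
	\left|\int\varphi\,\mathrm{d}\mu_n-\int\varphi\,\mathrm{d}\nu_n\right|\leq\frac1n\sum_{i=0}^{n-1}|\varphi(x_i)-\varphi(y_i)|\leq\frac1n\sum_{i=0}^{n-1}d(x_i,y_i).
\end{equation*}
We split the last sum over $\Lambda$ and $\Lambda^c$.

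On $\Lambda$ each term is at most $\varepsilon$, so that part contributes at most $\varepsilon|\Lambda|/n\leq\varepsilon$. On $\Lambda^c$ each term is at most $\mathrm{diam}(X)$, so that part contributes at most $\delta\cdot\mathrm{diam}(X)$.

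This bound is uniform in $\varphi$, so taking the supremum over $\mathsf{Lip}^1(X)$ gives \eqref{eq-pre-metric-lemma2}. An equivalent route is to use Proposition~\ref{pre-metric-prop1}, \eqref{eq-pre-metric-prop1-2}, with weights $s_i=1/n$ together with \eqref{eq-pre-metric-2}. That yields $\rho(\mu_n,\nu_n)\leq\frac1n\sum_i d(x_i,y_i)$ immediately, and the same splitting finishes the argument.

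There is no real obstacle here. The only point to check is that the crude bound $d\leq\mathrm{diam}(X)$ on the bad indices is what the statement allows. The assumption $\varepsilon,\delta\leq1$ is not needed for the argument.
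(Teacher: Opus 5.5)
Your proof is correct and complete. You bound $\rho$ of the two empirical measures by $\frac1n\sum_{i=0}^{n-1}d(x_i,y_i)$, either via \eqref{eq-pre-metric-1} or via \eqref{eq-pre-metric-prop1-2} together with \eqref{eq-pre-metric-2}, and then split over the at least $(1-\delta)n$ good indices and the at most $\delta n$ bad ones, which gives exactly $\varepsilon+\delta\cdot\mathrm{diam}(X)$.

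The paper gives no proof of its own here; it imports the lemma from \cite{Lin-Tian-Yu-2024}, so there is no in-paper argument to compare against. Your argument is the standard one for the Wasserstein metric as defined in this paper. You are also right that the restriction $\varepsilon,\delta\leq1$ plays no role for this choice of $\rho$.
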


\subsection{Over-saturated sets}

In this subsection, we will provide some basic properties of over-saturated sets for dynamical systems with the non-uniform specification property.

%lemma1
\begin{lemma}[{\cite[Lemma~2.3]{Lin-Tian-Yu-2025+}}]\label{pre-saturated-lemma1}
	For any non-empty subset $K\subset\mathcal{M}_f(X)$, there exists a sequence $\{\nu_j\}_{j=1}^\infty$ contained in $K$ such that
	\begin{equation}\label{eq-pre-saturated-lemma1}
		\overline{\{\nu_j:j\geq n\}}=\overline{K},\quad\forall n\in\mathbb{N}.
	\end{equation}
	Moreover, if a sequence of invariant measures $\{\nu_j\}_{j=1}^{\infty}$ satisfies \eqref{eq-pre-saturated-lemma1}, then
	\begin{equation*}
		G^K=\bigcap_{j\geq1}G^{\nu_j}.
	\end{equation*}
\end{lemma}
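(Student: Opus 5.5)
The plan has two parts: construct the sequence, then identify $G^K$ with the intersection.

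\emph{Existence.} The space $\mathcal{M}_f(X)$ is a compact metric space under $\rho$, so $K$ is separable. Fix a countable dense subset $\{\mu_m\}_{m\ge1}\subset K$; if $K$ is finite, the list simply has repetitions. Enumerate it so that each $\mu_m$ appears infinitely often, for instance via the standard sequence
\begin{equation*}
	\mu_1,\ \mu_1,\mu_2,\ \mu_1,\mu_2,\mu_3,\ \dots,
\end{equation*}
and call the resulting sequence $\{\nu_j\}_{j\ge1}$. For any $n$, the tail $\{\nu_j:j\ge n\}$ still contains every $\mu_m$. Hence its closure contains $\overline{\{\mu_m\}}=\overline K$. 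It is contained in $\overline K$ because every $\nu_j\in K$. This gives \eqref{eq-pre-saturated-lemma1}.

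\emph{The identity $G^K=\bigcap_j G^{\nu_j}$.} I would first use that every $V_f(x)$ is closed, being a set of accumulation points. For ``$\subset$'': if $V_f(x)\supset K$ and $\nu_j\in K$, then $\nu_j\in V_f(x)$, so $x\in G^{\nu_j}$ for every $j$. For ``$\supset$'': if $\nu_j\in V_f(x)$ for all $j$, then closedness gives $V_f(x)\supset\overline{\{\nu_j\}}=\overline K\supset K$ by \eqref{eq-pre-saturated-lemma1} with $n=1$. This second inclusion only uses that the sequence lies in $\mathcal{M}_f(X)$ and that its closure contains $K$. It does not require $\nu_j\in K$.

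One point needs care in the ``Moreover'' part, since there the sequence is an arbitrary sequence of invariant measures satisfying \eqref{eq-pre-saturated-lemma1}, not necessarily inside $K$. The ``$\subset$'' direction then uses $\nu_j\in\overline K$, which follows from \eqref{eq-pre-saturated-lemma1}, together with $\overline K\subset V_f(x)$, which holds because $V_f(x)$ is closed and contains $K$. This is the only step where anything beyond definitions is needed. The whole argument is routine, and the main point to get right is the use of closedness of $V_f(x)$.
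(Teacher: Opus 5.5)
Your proof is correct and complete: separability of $K$ with an enumeration that repeats each element of a countable dense subset infinitely often produces the sequence, and closedness of $V_f(x)$ gives both inclusions. You also correctly note that only the case $n=1$ is needed for the identity, and that the sequence in the ``Moreover'' part need not lie in $K$. The paper does not prove this lemma itself but cites it from Lin--Tian--Yu, and your argument is the standard one for this fact.
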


\begin{lemma}[{\cite[Theorem~3.2 (4)]{Lin-Tian-Yu-2024}}]\label{pre-saturated-lemma2}
	Suppose that $(X,f)$ satisfies the non-uniform specification property.
	Let $K\subset\mathcal{M}_f(X)$ be a non-empty subset.
	If $G^K\neq\varnothing$, then $G^K$ is residual in $X$.
\end{lemma}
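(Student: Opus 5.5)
The statement is Lemma~\ref{pre-saturated-lemma2}: if $G^K\neq\varnothing$, then $G^K$ is residual. My plan is to show that $G^K$ is a dense $G_\delta$ set.

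\textbf{$G_\delta$ structure.} By Lemma~\ref{pre-saturated-lemma1}, pick $\{\nu_j\}\subset K$ with $G^K=\bigcap_j G^{\nu_j}$. It therefore suffices to show that each $G^{\nu}$ is a $G_\delta$ set and that $G^K$ is dense. Recall that $\nu\in V_f(x)$ if and only if for every $m,N\in\mathbb{N}$ there is $n\geq N$ with $\rho(\delta_x^n,\nu)<1/m$. Hence
\begin{equation*}
	G^{\nu}=\bigcap_{m\geq1}\bigcap_{N\geq1}\bigcup_{n\geq N}\{x:\rho(\delta_x^n,\nu)<1/m\}.
\end{equation*}
Each map $x\mapsto\delta_x^n$ is continuous, so each set $\{x:\rho(\delta_x^n,\nu)<1/m\}$ is open. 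Thus $G^\nu$ is $G_\delta$, and so is the countable intersection $G^K$.

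\textbf{Density.} Fix $z\in G^K$, a non-empty open set $U$, and $y\in U$ with $B(y,2\varepsilon)\subset U$. Apply the non-uniform specification property with $k=2$. Shadow the orbit segment of $y$ on $[0,0]$. Then, after the gap $M(1,\varepsilon)$, shadow $f^{a}z$ on $[a,b]$ for longer and longer $b$. This yields points $w_b\in \overline{B(y,\varepsilon)}$ whose orbit $\varepsilon$-follows $z$ (time-shifted by $a$) on $[a,b]$. Take a limit point $w$ of $w_b$ as $b\to\infty$. By continuity, $w\in\overline{B(y,\varepsilon)}\subset U$ and $d(f^nw,f^{n-a}z)\leq\varepsilon$ for all $n\geq a$.

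\textbf{Main obstacle.} The limit point $w$ only $\varepsilon$-shadows $z$, so $V_f(w)$ is merely $\varepsilon$-close to $V_f(z)$ via Lemma~\ref{pre-metric-lemma2}. This does not give $w\in G^K$. I would fix this with a diagonal construction. Choose $\varepsilon_i\downarrow0$ and concatenate, using specification, the segment of $y$ followed by longer and longer pieces of $z$. Each new piece restarts at a shadowing precision $\varepsilon_i$, and the segment lengths $n_i$ grow so fast that the gaps $M(n_i,\varepsilon_{i+1})$ are negligible relative to the accumulated time. The difficulty is that the gap function depends on $n$, so a gap may be comparable to or larger than the preceding segment. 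To handle this, I would take each next segment much longer than both the current total time and the required gap. Then, at suitable times $t$, the empirical measure $\delta_w^t$ is dominated by the most recent $z$-piece.

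The pieces must also be chosen so that their empirical measures come close to each $\nu_j$. Since $z\in G^{\nu_j}$, there exist arbitrarily long times $n$ with $\delta_z^n$ close to $\nu_j$. I would cycle through $j=1,\dots,i$ at stage $i$, taking pieces $z$ on $[0,n]$ with such $n$. Lemma~\ref{pre-metric-lemma2} and \eqref{eq-pre-metric-prop1-1} then give $\rho(\delta_w^t,\nu_j)\to0$ along a subsequence of times $t$. The point $w$ itself is obtained as a limit of finite-specification points, using compactness and closedness of the shadowing conditions. Hence $w\in U\cap G^{\nu_j}$ for every $j$, so $w\in U\cap G^K$. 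The essential point to verify carefully is this growth-rate bookkeeping for the non-uniform gaps.
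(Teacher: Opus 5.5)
The paper does not prove this lemma; it is quoted from \cite[Theorem~3.2(4)]{Lin-Tian-Yu-2024}, so I am comparing your argument with the natural one. Your $G_\delta$ step is correct, and your route works, but the ``main obstacle'' in the density step can be avoided entirely. You already have $G^K=\bigcap_{j,m,N\geq1}O_{j,m,N}$ with $O_{j,m,N}=\bigcup_{n\geq N}\{x:\rho(\delta_x^n,\nu_j)<1/m\}$ open. A countable intersection of dense open sets is residual by definition, so it is enough that each $O_{j,m,N}$ is dense. That needs only your two-segment shadowing with a \emph{finite} piece of $z$:
\begin{itemize}
\item choose $\varepsilon<1/(3m)$ with $B(y,2\varepsilon)\subset U$, and put $a=M(1,\varepsilon)$;
\item using $\nu_j\in V_f(z)$, choose $n\geq N$ with $\rho(\delta_z^n,\nu_j)<1/(3m)$ and $a\,\mathrm{diam}(X)/(a+n)<1/(3m)$;
\item let $w$ $\varepsilon$-shadow $y$ on $[0,0]$ and $z$ on $[a,a+n-1]$.
\end{itemize}
Then $w\in U$, and Proposition~\ref{pre-metric-prop1} with Lemma~\ref{pre-metric-lemma2} give $\rho(\delta_w^{a+n},\nu_j)<1/m$, so $w\in U\cap O_{j,m,N}$. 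A fixed precision suffices because each $O_{j,m,N}$ asks for one time at a fixed tolerance. No limit point, no decreasing precisions and no growth bookkeeping are needed.

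Your diagonal construction is also viable and has the merit of producing explicit points. It is the template of the proof of Theorem~\ref{DC1-thm1}, where further properties of the same points must be controlled simultaneously. However, the phrase ``each new piece restarts at precision $\varepsilon_i$ \dots\ limit of finite-specification points'' hides the one idea that makes it compatible with the definition. A single application of the property uses one $\varepsilon$ for all segments. Imposing the finest precision $\varepsilon_I$ on pieces whose positions were fixed at earlier stages would require the earlier gaps $M(n_i,\varepsilon_I)$ to grow with $I$. The fix, as in Theorem~\ref{DC1-thm1}, is to apply the property at stage $i+1$ to the \emph{whole} orbit segment of the previous point $w_i$ on $[0,T_i]$, followed by the new $z$-pieces, all at precision $\varepsilon_{i+1}$. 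The first gap is then $M(T_i+1,\varepsilon_{i+1})$, where $T_i$ is the accumulated time, not $M(n_i,\varepsilon_{i+1})$. The sequence $(w_i)$ is Cauchy, and its limit shadows the stage-$i$ pieces within $\sum_{l\geq i}\varepsilon_l$. With that in place, your requirement that each new piece be much longer than the accumulated time plus the gap completes the argument.
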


%prop3
\begin{proposition}[{\cite[Proposition~2.5]{Lin-Tian-Yu-2025+}}]\label{pre-saturated-prop3}
	Suppose that $(X,f)$ satisfies the non-uniform specification property.
	Let $K\subset\mathcal{M}_f(X)$ be a non-empty subset and $\{\nu_j\}_{j=1}^\infty$ be a sequence contained in $K$ satisfying
	\begin{equation*}
		\overline{\{\nu_j:j\geq n\}}=\overline{K},\quad\forall n\in\mathbb{N}.
	\end{equation*}
	Then $G^K\neq\varnothing$ if and only if $G^{\nu_j}\neq\varnothing$ for all $j\in\mathbb{N}$.
\end{proposition}

%\begin{proof}
%	The necessity follows from Lemma~\ref{pre-saturated-lemma1}.
%	For the sufficiency, by Lemma~\ref{pre-saturated-lemma2}, every $G^{\nu_j}$ is residual in $X$.
%	Using Lemma~\ref{pre-saturated-lemma1}, we conclude that $G^K=\bigcap_{j\geq1}G^{\nu_j}$ is residual in $X$.
%\end{proof}

%prop5
\begin{proposition}[{\cite[Proposition~2.7]{Lin-Tian-Yu-2025+}}]\label{pre-saturated-prop5}
	Suppose that $(X,f)$ satisfies the non-uniform specification.
	Let
	\begin{equation}\label{eq-pre-saturated-prop5}
		\mathcal{K}=\bigcup_{x\in X}V_f(x).
	\end{equation}
	Then
	\begin{enumerate}
		\item $G^K\neq\varnothing$ if and only if $K\in\mathcal{P}_*(\mathcal{K})$, where $\mathcal{P}_*(\mathcal{K})$ is the collection of non-empty subsets of $\mathcal{K}$;
		\item if $G^K\neq\varnothing$, then $G^{\mathcal{K}}\subset G^K$;
		\item $G_{\mathcal{K}}=G^{\mathcal{K}}\neq\varnothing$, and consequently $\mathcal{K}$ is a non-empty compact connected set.
	\end{enumerate}
\end{proposition}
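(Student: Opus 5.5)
The plan is to derive all three items directly from the definition of $\mathcal{K}$, Lemma~\ref{pre-saturated-lemma1} and Proposition~\ref{pre-saturated-prop3}. The essential observation is that $V_f(x)\subset\mathcal{K}$ for every $x\in X$ by the definition \eqref{eq-pre-saturated-prop5}. So once a point has $V_f(x)\supset\mathcal{K}$, we get equality $V_f(x)=\mathcal{K}$.

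For (1), necessity is immediate. If $x\in G^K$, then $K\subset V_f(x)\subset\mathcal{K}$, so $K\in\mathcal{P}_*(\mathcal{K})$. For sufficiency, let $K\subset\mathcal{K}$ be non-empty. By Lemma~\ref{pre-saturated-lemma1}, I choose a sequence $\{\nu_j\}_{j\ge1}\subset K$ with $\overline{\{\nu_j:j\ge n\}}=\overline{K}$ for all $n$. Each $\nu_j$ lies in $\mathcal{K}$, so $\nu_j\in V_f(x_j)$ for some $x_j\in X$. Then $x_j\in G^{\nu_j}$, so every $G^{\nu_j}$ is non-empty. Proposition~\ref{pre-saturated-prop3} then gives $G^K\neq\varnothing$. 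This is the only place the non-uniform specification property enters, and it enters only through Proposition~\ref{pre-saturated-prop3}.

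For (2), suppose $G^K\neq\varnothing$. Then $K\subset\mathcal{K}$ by (1). Any $x\in G^{\mathcal{K}}$ satisfies $V_f(x)\supset\mathcal{K}\supset K$, hence $x\in G^K$.

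For (3), note that $\mathcal{K}\neq\varnothing$, since each $V_f(x)$ is non-empty by \cite[Proposition~3.8]{DGS1976}. Applying (1) with $K=\mathcal{K}$, which trivially lies in $\mathcal{P}_*(\mathcal{K})$, gives $G^{\mathcal{K}}\neq\varnothing$. For $x\in G^{\mathcal{K}}$ we have $\mathcal{K}\subset V_f(x)\subset\mathcal{K}$, so $V_f(x)=\mathcal{K}$ and $x\in G_{\mathcal{K}}$. Conversely, $G_{\mathcal{K}}\subset G^{\mathcal{K}}$ is trivial, so $G_{\mathcal{K}}=G^{\mathcal{K}}$. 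Finally, $\mathcal{K}=V_f(x)$ for such an $x$, so \cite[Proposition~3.8]{DGS1976} shows that $\mathcal{K}$ is non-empty, compact and connected.

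I do not expect a genuine obstacle. The only non-formal step is the sufficiency in (1), where one must make sure the sequence fed into Proposition~\ref{pre-saturated-prop3} is taken inside $K$ with the tail-density property. Lemma~\ref{pre-saturated-lemma1} provides exactly such a sequence, and it also handles the case where $K$ is not closed.
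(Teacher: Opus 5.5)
Your proof is correct and follows essentially the same argument as the paper. The only difference is the order of steps: the paper applies Proposition~\ref{pre-saturated-prop3} once to $\mathcal{K}$ to get $G^{\mathcal{K}}\neq\varnothing$, then deduces sufficiency in (1) from $G^K\supset G^{\mathcal{K}}$. You instead apply it directly to each $K\subset\mathcal{K}$ and recover $G^{\mathcal{K}}\neq\varnothing$ as the special case $K=\mathcal{K}$.
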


%\begin{proof}
%	If $G^K\neq\varnothing$, then $K\subset V_f(x)\subset\mathcal{K}$ for any $x\in G^K$.
%	For the sufficiency, using Proposition~\ref{pre-saturated-prop3}, we obtain that $G^{\mathcal{K}}\neq\varnothing$, which implies that for any $K\in\mathcal{P}_*(\mathcal{K})$, $G^K\neq\varnothing$ since $G^K\supset G^{\mathcal{K}}$.
%	This completes the proof of statement (1).
%	The statement (2) follows immediately from (1).
%	
%	Now we show statement (3).
%	Clearly $G_{\mathcal{K}}\subset G^{\mathcal{K}}$.
%	On the other hand, for any $y\in G^{\mathcal{K}}$, we have
%	\begin{equation*}
%		\mathcal{K}\subset V_f(y)\subset\bigcup_{x\in X}V_f(x)=\mathcal{K},
%	\end{equation*}
%	which implies that $V_f(y)=\mathcal{K}$.
%	We conclude that $G_{\mathcal{K}}=G^{\mathcal{K}}\neq\varnothing$.
%	It follows from \cite[Proposition~3.8]{DGS1976} that $\mathcal{K}$ is a non-empty compact connected set since $\mathcal{K}=V_f(x)$ for any $x\in G_{\mathcal{K}}$.
%\end{proof}

%lemma6
\begin{lemma}[{\cite[Lemma~2.8]{Lin-Tian-Yu-2025+}}]\label{pre-saturated-lemma6}
	Suppose that $(X,f)$ satisfies the non-uniform specification.
	Then
	\begin{enumerate}
		\item for any $\varphi\in\mathfrak{C}(f)$, there exists $K\subset \mathcal{M}_f(X)$ such that $G^K\neq\varnothing$ and $G^K\subset I_{\varphi}(f)$;
		\item $G^{\mathcal{K}}\subset\mathrm{CI}(f)$.
	\end{enumerate}
\end{lemma}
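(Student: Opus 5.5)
The plan is to prove both parts directly from the definitions. Part (2) then follows from part (1) together with the trivial monotonicity $K\subset K'\Rightarrow G^{K'}\subset G^K$. The non-uniform specification property is not really needed for (1). It enters (2) only through Proposition~\ref{pre-saturated-prop5}, which guarantees $G^{\mathcal{K}}\neq\varnothing$ and so makes the inclusion non-vacuous.

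For (1), fix $\varphi\in\mathfrak{C}(f)$ and pick $x\in I_\varphi(f)$. The Birkhoff averages satisfy
\begin{equation*}
\frac1n\sum_{k=0}^{n-1}\varphi(f^kx)=\int_X\varphi\,\mathrm{d}\delta_x^n ,
\end{equation*}
and this bounded sequence does not converge. So there are two subsequences $n_j$ and $m_j$ along which it converges to distinct limits $a\neq b$. Since $\mathcal{M}(X)$ is weak$^*$ compact, I pass to further subsequences so that $\delta_x^{n_j}\to\mu_1$ and $\delta_x^{m_j}\to\mu_2$. Then $\mu_1,\mu_2\in V_f(x)\subset\mathcal{M}_f(X)$, and because $\varphi$ is continuous, $\int\varphi\,\mathrm{d}\mu_1=a\neq b=\int\varphi\,\mathrm{d}\mu_2$.

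Set $K=\{\mu_1,\mu_2\}$. Then $x\in G^K$, so $G^K\neq\varnothing$. Now take any $y\in G^K$. Since $\mu_1,\mu_2\in V_f(y)$, there are subsequences with $\delta_y^{n}\to\mu_i$ for $i=1,2$. Along these subsequences the averages $\int\varphi\,\mathrm{d}\delta_y^n$ tend to $a$ and to $b$ respectively. Hence the limit does not exist, and $y\in I_\varphi(f)$.

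For (2), fix $\varphi\in\mathfrak{C}(f)$ and let $K$ be as in (1). By construction $K\subset V_f(x)\subset\mathcal{K}$. Therefore every $y\in G^{\mathcal{K}}$ satisfies $V_f(y)\supset\mathcal{K}\supset K$, which gives $G^{\mathcal{K}}\subset G^K\subset I_\varphi(f)$. Intersecting over all $\varphi\in\mathfrak{C}(f)$ yields $G^{\mathcal{K}}\subset\mathrm{CI}(f)$. If $\mathfrak{C}(f)=\varnothing$, the empty intersection is $X$ by convention, so the inclusion holds trivially.

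The only mildly delicate step is extracting two accumulation measures that separate $\varphi$. This uses compactness of $\mathcal{M}(X)$ and the continuity of $\mu\mapsto\int\varphi\,\mathrm{d}\mu$ in the weak$^*$ topology, and there is no real obstacle there.
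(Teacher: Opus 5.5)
Your proof is correct and complete. The paper imports this lemma from \cite{Lin-Tian-Yu-2025+} without proof, and your argument is the natural one: pick $x\in I_\varphi(f)$, let $K\subset V_f(x)$ consist of two accumulation measures with different $\varphi$-integrals, and then use $K\subset\mathcal{K}$ to get $G^{\mathcal{K}}\subset G^K\subset I_\varphi(f)$, which is exactly the monotonicity recorded in Proposition~\ref{pre-saturated-prop5}(2). You are also right that non-uniform specification is needed only to make $G^{\mathcal{K}}$ non-empty, not for either inclusion.
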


\subsection{Total variation distance}

Define the \emph{total variation distance} $\rho_{\mathsf{tv}}$ on $\mathcal{M}(X)$ as
\begin{equation*}
	\rho_{\mathsf{tv}}(\mu,\nu)=\sup_{B\in\mathfrak{B}(X)}|\mu(B)-\nu(B)|.
\end{equation*}

\begin{proposition}\label{pre-tv-prop1}
	Let $\mu_1,\cdots,\mu_m$ and $\nu_1,\cdots,\nu_n$ be $m+n$ distinct measures in $\mathcal{M}(X)$ such that
	\begin{equation*}
		\rho_{\mathsf{tv}}(\mu_i,\nu_j)=1,\quad\forall\,1\leq i\leq m,1\leq j\leq n.
	\end{equation*}
	Then
	\begin{equation*}
		\rho_{\mathsf{tv}}(\mu,\nu)=1,\quad\forall\,\mu\in\mathrm{co}\{\mu_1,\cdots,\mu_m\},\nu\in\mathrm{co}\{\nu_1,\cdots,\nu_n\}.
	\end{equation*}
\end{proposition}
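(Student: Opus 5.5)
The approach is to use the standard characterization that two probability measures are at total variation distance $1$ if and only if they are mutually singular. So the first step is to record this fact. For any $\mu,\nu\in\mathcal{M}(X)$, $\rho_{\mathsf{tv}}(\mu,\nu)=1$ holds exactly when there is a Borel set $B$ with $\mu(B)=1$ and $\nu(B)=0$. The direction we need is the forward one.

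\begin{itemize}
\item Suppose $\rho_{\mathsf{tv}}(\mu,\nu)=1$. Then there are Borel sets $B_k$ with $|\mu(B_k)-\nu(B_k)|>1-2^{-k}$.
\item Replacing $B_k$ by its complement if necessary, we may assume $\mu(B_k)>1-2^{-k}$ and $\nu(B_k)<2^{-k}$.
\item Set $B=\limsup_k B_k=\bigcap_{n}\bigcup_{k\ge n}B_k$.
\item By Borel--Cantelli, $\nu(B)=0$.
\item Also $\mu(\bigcup_{k\ge n}B_k)\ge 1-2^{-n}$ for each $n$, so $\mu(B)=1$.
\end{itemize}

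The second step applies this to each pair $(i,j)$, giving Borel sets $B_{ij}$ with $\mu_i(B_{ij})=1$ and $\nu_j(B_{ij})=0$. Define
\begin{equation*}
	B=\bigcup_{i=1}^m\bigcap_{j=1}^n B_{ij}.
\end{equation*}
We check the two required properties of $B$.
\begin{itemize}
\item For each $i$, the finite intersection $\bigcap_j B_{ij}$ has full $\mu_i$-measure, so $\mu_i(B)=1$.
\item For each $j$, we have $B\subset\bigcup_i B_{ij}$, which is a finite union of $\nu_j$-null sets, so $\nu_j(B)=0$.
\end{itemize}

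The third step passes to convex combinations. Take $\mu=\sum_i s_i\mu_i$ and $\nu=\sum_j t_j\nu_j$. By linearity, $\mu(B)=\sum_i s_i=1$ and $\nu(B)=0$. Hence
\begin{equation*}
	\rho_{\mathsf{tv}}(\mu,\nu)\ge|\mu(B)-\nu(B)|=1,
\end{equation*}
and the trivial bound $\rho_{\mathsf{tv}}\le1$ gives equality.

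The only step needing real care is the first one, namely upgrading ``distance $1$ as a supremum'' to an actual separating set. The Borel--Cantelli limsup construction handles this. Alternatively, one can take a Hahn decomposition of $\mu-\nu$, whose positive set $P$ satisfies $\mu(P)-\nu(P)=\rho_{\mathsf{tv}}(\mu,\nu)=1$, which forces $\mu(P)=1$ and $\nu(P)=0$. Everything after that is finite unions and intersections together with linearity.
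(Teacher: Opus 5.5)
Your proof is correct and uses the same separating set $B=\bigcup_{i}\bigcap_{j}B_{ij}$ as the paper. The only difference is that you first upgrade $\rho_{\mathsf{tv}}=1$ to exact mutual singularity (via Borel--Cantelli or the Hahn decomposition). The paper instead works with approximate sets satisfying $\mu_i(B_{i,j})-\nu_j(B_{i,j})>1-\frac{\varepsilon}{2(m+n)}$, obtains $\rho_{\mathsf{tv}}(\mu,\nu)>1-\varepsilon$, and lets $\varepsilon\to0$, which avoids that preliminary step.
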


\begin{proof}
	Given $\varepsilon>0$, for any $1\leq i\leq m$ and any $1\leq j\leq n$, choose Borel measurable set $B_{i,j}$ such that
	\begin{equation*}
		|\mu_i(B_{i,j})-\nu_j(B_{i,j})|>1-\frac{\varepsilon}{2(m+n)}.
	\end{equation*}
	Replacing $B_{i,j}$ by its complement if necessary, we may assume that
	\begin{equation*}
		\mu_i(B_{i,j})-\nu_j(B_{i,j})>1-\frac{\varepsilon}{2(m+n)}.
	\end{equation*}
	Then
	\begin{equation*}
		\mu_i(B_{i,j})>1-\frac{\varepsilon}{2(m+n)}\quad\text{and}\quad\nu_j(B_{i,j})<\frac{\varepsilon}{2(m+n)}.
	\end{equation*}
	Write
	\begin{equation*}
		\mu=\sum_{i=1}^m\alpha_i\mu_i\quad\text{and}\quad \nu=\sum_{j=1}^n\beta_j\nu_j.
	\end{equation*}
	Let
	\begin{equation*}
		B=\bigcup_{i=1}^m\bigcap_{j=1}^nB_{i,j}.
	\end{equation*}
	Then we have
	\begin{equation*}
		\mu(B)=\sum_{i=1}^m\alpha_i\mu_i(B)\geq\sum_{i=1}^m\alpha_i\mu_i\Big(\bigcap_{j=1}^nB_{i,j}\Big)>1-\frac{\varepsilon}{2}
	\end{equation*}
	and
	\begin{equation*}
		\nu(B)=\sum_{j=1}^n\beta_j\nu_j(B)\leq\sum_{j=1}^n\beta_j\nu_j\Big(\bigcup_{i=1}^mB_{i,j}\Big)<\frac{\varepsilon}{2}.
	\end{equation*}
	Hence $\mu(B)-\nu(B)>1-\varepsilon$, which implies that
	\begin{equation*}
		\rho_{\mathsf{tv}}(\mu,\nu)>1-\varepsilon.
	\end{equation*}
	It follows from the arbitrariness of $\varepsilon$ that $\rho_{\mathsf{tv}}(\mu,\nu)=1$.
	This completes the proof.
\end{proof}

\subsection{$\sigma$-controlled function}

%def1
\begin{definition}
	Given $0<\sigma<1$, a continuous function $\varphi$ is called a \emph{$\sigma$-controlled function} if
	\begin{equation*}
		\sup_{\mu\in\mathcal{M}_f(X)}\int_X\varphi\mathrm{d}\mu-\inf_{\mu\in\mathcal{M}_f(X)}\int_X\varphi\mathrm{d}\mu>\sigma\Big(\sup_{x\in X}\varphi(x)-\inf_{x\in X}\varphi(x)\Big).
	\end{equation*}
	The set of $\sigma$-controlled functions is denoted by $C_{\sigma}(X)$.
\end{definition}

\begin{lemma}\label{pre-controlled-lemma2}
	Let $0<\sigma<1$ and $\mu,\nu\in\mathcal{M}_f(X)$.
	If $\rho_{\mathsf{tv}}(\mu,\nu)>\sigma$, then there exists $\varphi\in C_{\sigma}(X)$ such that
	\begin{equation*}
		\int_X\varphi\mathrm{d}\nu-\int_X\varphi\mathrm{d}\mu>\sigma\Big(\sup_{x\in X}\varphi(x)-\inf_{x\in X}\varphi(x)\Big).
	\end{equation*}
\end{lemma}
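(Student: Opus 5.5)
The natural route is to use the Hahn decomposition of the signed measure $\nu-\mu$, then regularize the resulting indicator function into a continuous one.

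Since $\rho_{\mathsf{tv}}(\mu,\nu)>\sigma$, there is a Borel set $B$ with $|\nu(B)-\mu(B)|>\sigma$. Replacing $B$ by its complement if necessary, we may assume $\nu(B)-\mu(B)>\sigma$. Fix $\eta>0$ with $\nu(B)-\mu(B)>\sigma+3\eta$.

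Both $\mu$ and $\nu$ are regular Borel probability measures on the compact metric space $X$, so we can approximate $B$ from both sides. Apply regularity to the finite measure $\mu+\nu$ to get a compact set $F\subset B$ and an open set $W\supset B$ with $(\mu+\nu)(W\setminus F)<\eta$. By Urysohn's lemma, choose a continuous $\varphi:X\to[0,1]$ with $\varphi=1$ on $F$ and $\varphi=0$ off $W$; Tietze or a distance-function formula $d(x,W^c)/(d(x,W^c)+d(x,F))$ gives this explicitly.

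Next compare the integrals. Since $\varphi$ agrees with $\mathbf 1_B$ outside $W\setminus F$ and both take values in $[0,1]$,
\begin{equation*}
\Big|\int\varphi\,\mathrm d\nu-\nu(B)\Big|\le\nu(W\setminus F)<\eta,\qquad \Big|\int\varphi\,\mathrm d\mu-\mu(B)\Big|<\eta .
\end{equation*}
Hence $\int\varphi\,\mathrm d\nu-\int\varphi\,\mathrm d\mu>\sigma+\eta$. Because $0\le\varphi\le1$, we have $\sup\varphi-\inf\varphi\le 1$, so
\begin{equation*}
\int\varphi\,\mathrm d\nu-\int\varphi\,\mathrm d\mu>\sigma\ge\sigma\Big(\sup_{x\in X}\varphi(x)-\inf_{x\in X}\varphi(x)\Big).
\end{equation*}
This is the required inequality.

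It remains to check that $\varphi\in C_\sigma(X)$. Since $\mu,\nu\in\mathcal M_f(X)$,
\begin{equation*}
\sup_{m\in\mathcal M_f(X)}\int\varphi\,\mathrm dm-\inf_{m\in\mathcal M_f(X)}\int\varphi\,\mathrm dm\ \ge\ \int\varphi\,\mathrm d\nu-\int\varphi\,\mathrm d\mu .
\end{equation*}
By the previous step, the right-hand side exceeds $\sigma(\sup\varphi-\inf\varphi)$, so $\varphi$ is $\sigma$-controlled.

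The main obstacle is the regularity step: the approximation must be simultaneous for $\mu$ and $\nu$. Approximating with respect to the single finite measure $\mu+\nu$ handles both at once. The rest is bookkeeping, using only that the oscillation of $\varphi$ is at most $1$.
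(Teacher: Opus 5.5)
Your proof is correct and follows essentially the same route as the paper. You pick a Borel set witnessing $\nu(B)-\mu(B)>\sigma$, use regularity to get disjoint closed sets on which an Urysohn function $0\le\varphi\le 1$ equals $1$ and $0$, and then use $\sup\varphi-\inf\varphi\le 1$. The differences are minor: the paper approximates one-sidedly (a closed subset of $A$ for $\nu$, a closed subset of $X\setminus A$ for $\mu$) instead of using $\mu+\nu$, and you explicitly check $\varphi\in C_\sigma(X)$ and bound the oscillation by $\le 1$ where the paper writes an equality.
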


\begin{proof}
	Since $\rho_{\mathsf{tv}}(\mu,\nu)>\sigma$, we can choose $\kappa>0$ sufficiently small and take a Borel set $A$ such that
	\begin{equation}\label{eq-pre-controlled-lemma2-proof-1}
		\nu(A)-\mu(A)>\sigma+2\kappa.
	\end{equation}
	Since $\mu$ and $\nu$ are regular, we can choose two closed sets $B\subset A$ and $C\subset X\setminus A$ such that
	\begin{equation}\label{eq-pre-controlled-lemma2-proof-2}
		\nu(B)>\nu(A)-\kappa\quad\text{and}\quad\mu(X\setminus C)<\mu(A)+\kappa.
	\end{equation}
	Note that $B$ and $C$ are disjoint.
	By Urysohn's lemma, there exists $\varphi\in C(X)$ such that
	\begin{equation}\label{eq-pre-controlled-lemma2-proof-3}
		\varphi|_{B}\equiv 1,\;\varphi|_{C}\equiv 0\;\;\text{and}\;\;0\leq \varphi(x)\leq 1,\;\forall x\in X.
	\end{equation}
	Combining \eqref{eq-pre-controlled-lemma2-proof-1}, \eqref{eq-pre-controlled-lemma2-proof-2} and \eqref{eq-pre-controlled-lemma2-proof-3}, we conclude that
	\begin{equation*}
		\int_X\varphi\mathrm{d}\nu-\int_X\varphi\mathrm{d}\mu>\nu(B)-\mu(X\setminus C)>\nu(A)-\mu(A)-2\kappa>\sigma=\sigma\Big(\sup_{x\in X}\varphi(x)-\inf_{x\in X}\varphi(x)\Big).
	\end{equation*}
	This completes the proof.
\end{proof}

%\begin{corollary}\label{pre-controlled-cor3}
%	Let $\mu_1,\cdots,\mu_m$ and $\nu_1,\cdots,\nu_n$ be $m+n$ distinct invariant measures such that
%	\begin{equation*}
%		\rho_{\mathsf{tv}}(\mu_i,\nu_j)=1,\quad\forall\,1\leq i\leq m,1\leq j\leq n.
%	\end{equation*}
%	Then for any $0<\sigma<1$, there exists $\varphi\in C_{\sigma}(X)$ such that
%	\begin{equation*}
%		\int_X\varphi\mathrm{d}\nu-\int_X\varphi\mathrm{d}\mu>\sigma\Big(\sup_{x\in X}\varphi(x)-\inf_{x\in X}\varphi(x)\Big),\quad\forall\,\mu\in\mathrm{co}\{\mu_1,\cdots,\mu_m\},\nu\in\mathrm{co}\{\nu_1,\cdots,\nu_n\}.
%	\end{equation*}
%\end{corollary}

\section{Distal pairs}\label{Sect-distal}

In this section, we show that for dynamical systems with non-uniform specification, there are uncountably many ergodic measures whose generic points admit distal pairs.
Recall that a pair of points $\{x,y\}$ is called \emph{distal} if
\begin{equation*}
	\inf_{n\geq0} d(f^nx,f^ny)>0.
\end{equation*}
A pair of points $\{x,y\}$ is called \emph{proximal} if it is not distal.
Clearly $\{x,y\}$ is proximal if and only if
\begin{equation*}
	\liminf_{n\rightarrow\infty}d(f^nx,f^ny)=0.
\end{equation*}
A dynamical system $(X,f)$ is called \emph{proximal} if it has no distal pair.

A closed $f$-invariant set $M\subset X$ is called \emph{minimal} if $M$ is the unique non-empty closed $f$-invariant subset of $M$.
If a minimal set $M$ has at least two points, then we say $M$ is \emph{non-degenerate}.
The following lemma provides a necessary and sufficient condition for systems to be proximal.

\begin{lemma}[{\cite[Lemma~18]{Oprocha-2009}}]\label{distal-lemma1}
	$(X,f)$ is proximal if and only if it has a fixed point which is the unique minimal set.
	In particular, if $(X,f)$ has a non-degenerate minimal set, then $(X,f)$ has a distal pair.
\end{lemma}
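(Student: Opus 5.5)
The plan is to prove both implications directly using the orbit closure in the product system $(X\times X,f\times f)$. The only tools needed are the existence of minimal sets (Zorn's lemma, by compactness) and the fact that a limit of iterates commutes with $f$. The "in particular" clause is then immediate: if $(X,f)$ has a non-degenerate minimal set, then no fixed point can be the unique minimal set, so $(X,f)$ is not proximal, i.e. it has a distal pair.

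\emph{Sufficiency.} Suppose $p$ is a fixed point and $\{p\}$ is the unique minimal set. Take any $x,y\in X$ and let $Z$ be the $(f\times f)$-orbit closure of $(x,y)$ in $X\times X$. By Zorn's lemma, $Z$ contains a minimal set $N$ of $f\times f$. Each coordinate projection $\pi_i(N)$ is a closed $f$-invariant set on which $f$ is minimal, because the image of a minimal system under a factor map is minimal. Hence $\pi_1(N)=\pi_2(N)=\{p\}$, so $N=\{(p,p)\}$ and $(p,p)\in Z$. This means there are $n_k\to\infty$ with $f^{n_k}x\to p$ and $f^{n_k}y\to p$. (If instead $(p,p)$ lies on the finite part of the orbit, then $f^nx=f^ny=p$ for that $n$, and hence for all larger $n$.) In either case $\liminf_n d(f^nx,f^ny)=0$, so every pair is proximal.

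\emph{Necessity.} Suppose $(X,f)$ is proximal. Let $M$ be any minimal set and $x\in M$. The pair $\{x,fx\}$ is proximal, so there are $n_k\to\infty$ with $d(f^{n_k}x,f^{n_k+1}x)\to0$. Passing to a subsequence, $f^{n_k}x\to z$, and then by continuity $f^{n_k+1}x\to fz$. Thus $z=fz$ is a fixed point, and $z\in\overline{\mathcal{O}(x)}=M$. Since $\{z\}$ is closed and invariant, minimality forces $M=\{z\}$, so every minimal set is a fixed point. For uniqueness, suppose $\{z\}$ and $\{z'\}$ are two such minimal sets. The pair $\{z,z'\}$ is proximal, so $\liminf_n d(f^nz,f^nz')=d(z,z')=0$, giving $z=z'$. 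Hence $X$ has a fixed point that is its unique minimal set.

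The main point requiring care is the necessity direction. A naive attempt would try to show that a proximal minimal system is trivial using the Ellis semigroup and idempotents, which is cumbersome. The key trick that avoids this is to apply proximality to the specific pair $\{x,fx\}$: the limit point is then automatically fixed by $f$.
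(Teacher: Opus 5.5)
Your proof is correct. The paper gives no proof of this lemma; it imports it from \cite[Lemma~18]{Oprocha-2009}, so there is no internal argument to compare against. What you give is a complete, self-contained proof that uses only compactness, continuity and the existence of minimal sets.

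Both directions check out. In the sufficiency direction, a minimal subset $N$ of the $(f\times f)$-orbit closure of $(x,y)$ projects onto minimal sets of $(X,f)$. Hence $N=\{(p,p)\}$, which gives $\inf_{n\geq0}d(f^nx,f^ny)=0$ directly. Your case split on whether $(p,p)$ lies on the orbit is not needed, since non-distality only requires the infimum to vanish. In the necessity direction, applying proximality to $\{x,fx\}$ forces any limit point $z$ of $f^{n_k}x$ to satisfy $fz=z$. So every minimal set is a fixed point, and two fixed points are proximal only if they coincide.

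Two small bookkeeping points. When $fx=x$ the ``pair'' $\{x,fx\}$ is degenerate, but then $M=\{x\}$ immediately. And to conclude that $X$ \emph{has} a fixed point which is its unique minimal set, you should state in that step that at least one minimal set exists by Zorn's lemma; you invoke this earlier but not there.
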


Define the \emph{support} of $\mu\in\mathcal{M}(X)$ by
\begin{equation*}
	\mathrm{supp}(\mu)=\{x\in X:\mu(U)>0\;\text{for any neighborhood $U$ of $x$}\}.
\end{equation*}
The following lemma provides a sufficient condition for the existence of distal pairs in the set of generic points with respect to an ergodic measure.

\begin{lemma}[{\cite[Lemma~4.1]{Chen-Tian-2021}}]\label{distal-lemma2}
	Let $\mu$ be an ergodic measure with non-degenerate and minimal support, that is, $\mathrm{supp}(\mu)$ is a non-degenerate minimal set.
	Then $G_{\mu}$ contains a distal pair.
\end{lemma}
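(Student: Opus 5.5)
The plan is to work inside $M=\mathrm{supp}(\mu)$. Since $\mu$ is ergodic and $\mu(M)=1$, Birkhoff's theorem gives $\mu(G_\mu\cap M)=1$. Since $M$ is minimal and non-degenerate, it carries no fixed point, so $\mu$ is non-atomic.

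\textbf{Step 1: a closed invariant set of distal pairs.} Applying Lemma~\ref{distal-lemma1} to the subsystem $(M,f|_M)$ gives a distal pair $\{a,b\}\subset M$ with $\inf_{n}d(f^na,f^nb)=:2\delta>0$. Put
\begin{equation*}
D_\delta=\{(x,y)\in M\times M:\ d(f^nx,f^ny)\geq\delta\ \ \forall n\geq0\}.
\end{equation*}
This set is closed and forward invariant under $f\times f$, contains $(a,b)$, and is symmetric. Its projection $\pi_1(D_\delta)$ is a non-empty closed forward invariant subset of the minimal set $M$, so $\pi_1(D_\delta)=M$; similarly $\pi_2(D_\delta)=M$.

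\textbf{Step 2: lifting $\mu$ to $D_\delta$.} Since $\pi_1\colon D_\delta\to M$ is a continuous surjective factor map, $\mu$ lifts to an $(f\times f)$-invariant measure on $D_\delta$. To see this, push forward any preimage measure and take a weak$^*$ limit of Ces\`aro averages. By extremality of $\mu$, some ergodic component $\lambda$ of this lift still projects to $\mu$. For $\lambda$-a.e.\ $(x,y)$ the point is $\lambda$-generic, so $x\in G_\mu$, $y\in G_{\nu}$ with $\nu=(\pi_2)_*\lambda$ ergodic on $M$, and $\{x,y\}$ is distal.

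\textbf{Step 3: forcing the second marginal to be $\mu$ (main obstacle).} Step 2 only yields $y\in G_\nu$, and $\nu$ may differ from $\mu$ because a minimal system need not be uniquely ergodic. If $M$ is uniquely ergodic, we are done. In general, I would modify the construction rather than the measure. The idea is to produce $y$ by hand, keeping $x\in G_\mu$ fixed and choosing $y=\lim f^{n_k}x$-type points along the fibre $D_\delta(x)=\{y:(x,y)\in D_\delta\}$. Alternatively, I would build $y$ as a limit of points that alternately follow long $\mu$-generic orbit segments of $x$ shifted in time. The shifts $k$ would be chosen so that the segments of $x$ and of $f^kx$ stay $\delta$-apart; minimality makes the return times bounded, which lets one keep $y$'s empirical measures close to $\mu$ while preserving the uniform $\delta$-separation from $x$.

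The first thing I would try is the pair $(x,f^kx)$ with $x\in G_\mu$, since $f^kx$ is automatically in $G_\mu$. The reduction is then to showing that for some $k$ and some generic $x$ the pair is not proximal. If that fails for all $k$, which can happen in weakly mixing minimal systems, I fall back to the Step 2 joining combined with the diagonal/gluing argument above. I expect this step, making both coordinates generic for the same $\mu$ while staying in $D_\delta$, to be where the real work lies.
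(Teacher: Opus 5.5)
\textbf{There is a genuine gap.} Your proposal does not prove the lemma. Step~2 only produces a distal pair $\{x,y\}$ with $x\in G_\mu$ and $y\in G_\nu$, where $\nu$ is ergodic but need not equal $\mu$. Step~3, which is supposed to force $\nu=\mu$, is a list of strategies rather than an argument.

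\textbf{The pair $\{x,fx\}$ already works.} You abandon the natural candidate $\{x,f^kx\}$, $x\in G_\mu$, on the grounds that it may be proximal in weakly mixing minimal systems. That is false: it is never proximal. You yourself noted that the non-degenerate minimal set $M=\mathrm{supp}(\mu)$ contains no fixed point of $f$. Hence the continuous function $z\mapsto d(z,fz)$ has a positive minimum $c$ on the compact set $M$. Since $f(M)\subset M$, for every $x\in M$ and $n\geq 0$,
\[
d(f^nx,f^n(fx))=d\big(f^nx,f(f^nx)\big)\geq c>0 .
\]
The weak-mixing intuition applies to $\mu\times\mu$-typical pairs, whose orbits are dense in $M\times M$. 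The orbit closure of $(x,fx)$ instead lies in the graph of $f|_M$, which is disjoint from the diagonal.

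Now choose $x\in G_\mu\cap M$; this set is non-empty because $G_\mu$ and $M$ both have full $\mu$-measure. Then $fx\in G_\mu$, since $G_\mu$ is $f$-invariant, and $fx\neq x$. So $\{x,fx\}$ is a distal pair in $G_\mu$.

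In the language of your Step~3, the self-joining you were looking for is simply $(\mathrm{id}\times f)_*\mu$. It is ergodic, both its marginals equal $\mu$, and it is carried by your set $D_\delta$ with $\delta=c$. With this, Steps~1 and~2 become unnecessary.

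\textbf{The fallback routes would not work in general.} Gluing long $\mu$-generic orbit segments of $x$ and of time-shifted copies into a single orbit $y$ requires a specification-type property, which an arbitrary minimal system does not have. Taking $y$ as a limit of points $f^{n_k}x$ does not preserve genericity.

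\textbf{A smaller slip.} Your claim that $\mu$ is non-atomic is wrong in general: a periodic orbit of period at least $2$ is a non-degenerate minimal set. For the same reason, use $k=1$ rather than an arbitrary $k$, since when $M$ is finite, $f^k$ can have fixed points in $M$.

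The paper itself does not prove this lemma; it quotes it from Chen--Tian.
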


Recall that we always assume that $X$ has at least two points.

\begin{lemma}\label{distal-lemma3}
	Every dynamical system with non-uniform specification has a distal pair.
\end{lemma}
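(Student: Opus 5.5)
By Lemma~\ref{distal-lemma1}, it suffices to show that a system $(X,f)$ with non-uniform specification is not proximal. Equivalently, we must rule out the situation where $f$ has a fixed point $p$ and $\{p\}$ is the unique minimal set. I would argue by contradiction: assume $p$ is such a fixed point. Since $X$ is non-degenerate, pick $q\in X$ with $q\neq p$ and set $\varepsilon=d(p,q)/3>0$.

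The plan is to build, by specification, a point whose orbit keeps returning near $q$ and never settles near $p$, and then extract a minimal set avoiding $p$. The natural construction is a periodic-like pattern. Apply the non-uniform specification with gap function $M$ to the points $x_i=q$ and orbit segments of length one, so $a_i=b_i$. Take the gaps $a_{i+1}-b_i=M(1,\varepsilon)=:m$, which is a constant independent of $i$ because every segment has length $1$. For each $k$ this produces a point $z_k$ with $d(f^{i(m+1)}z_k,q)\leq\varepsilon$ for $0\le i<k$. Passing to a limit point $z$ of $(z_k)$ and using continuity of each $f^{i(m+1)}$ on the closed conditions, we obtain a single point $z$ such that
\[
d(f^{i(m+1)}z,q)\leq\varepsilon\quad\text{for all } i\geq0 .
\]
Thus the orbit of $z$ visits the closed ball $\overline{B}(q,\varepsilon)$ syndetically, with gaps at most $m+1$.

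Next I would pass to the orbit closure. Let $Y=\omega(z,f)$, and let $N\subset Y$ be a minimal set. By hypothesis $N=\{p\}$, so $p\in\omega(z,f)$. The key observation is that syndetic returns pass to the whole $\omega$-limit set: for every $y\in\omega(z,f)$ there exists $j\in[0,m]$ with $d(f^jy,q)\leq\varepsilon$. To see this, write $y=\lim f^{n_l}z$. For each $l$ some $j_l\le m$ has $n_l+j_l$ a multiple of $m+1$, so $d(f^{j_l}f^{n_l}z,q)\le\varepsilon$. Choosing a constant subsequence $j_l=j$ and passing to the limit gives $d(f^jy,q)\le\varepsilon$. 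Applying this to $y=p$ yields $d(p,q)=d(f^jp,q)\le\varepsilon=d(p,q)/3$, which is a contradiction.

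Hence $(X,f)$ is not proximal, and Lemma~\ref{distal-lemma1} gives a distal pair. The main obstacle is the first step, namely obtaining an infinite syndetic return pattern from a specification that is only stated for finitely many segments. This is handled by the compactness limit, which works because the gap $M(1,\varepsilon)$ is uniform when all segments have length one. I would also double-check the edge case $k\ge2$ required in the definition, which is harmless.
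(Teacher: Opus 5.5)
Your proof is correct and is essentially the paper's argument: both apply non-uniform specification to unit-length segments with the uniform gap $M(1,\varepsilon)$, pass to a compactness limit to get bounded-gap returns to a prescribed ball, show that the $\omega$-limit set cannot contain the relevant fixed point, and conclude with Lemma~\ref{distal-lemma1}. The only difference is that the paper alternates between two far-apart targets $y,z$, so that $\omega(x,f)$ contains no fixed point at all and only the ``non-degenerate minimal set gives a distal pair'' half of Lemma~\ref{distal-lemma1} is needed, whereas you argue by contradiction with a single target $q$ chosen away from the hypothetical unique minimal fixed point $p$ (and note the harmless off-by-one slip: with gaps exactly $m$ the return times are $im$, so either take gaps $m+1$ or work with period $m$).
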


\begin{proof}
	Since $|X|\geq 2$, we can choose two distinct points $y,z\in X$ and $\varepsilon>0$ such that $d(y,z)>5\varepsilon$.
	Let $m=M(1,\varepsilon)$.
	Choose $\kappa>0$ such that for any $x_1,x_2\in X$ with $d(x_1,x_2)<\kappa$, we have
	\begin{equation}\label{eq-DC1-lemma3-proof-1}
		\max_{0\leq i\leq 3m-1}d(f^ix_1,f^ix_2)<\varepsilon.
	\end{equation}
	For $n\in\mathbb{N}$, define
	\begin{equation*}
		E_n=\bigcap_{i=0}^{n-1}f^{-2im}\Big(\overline{B(y,\varepsilon)}\cap f^{-m}\big(\overline{B(z,\varepsilon)}\big)\Big)
		\quad\text{and}\quad E=\bigcap_{n\geq1}E_n.
	\end{equation*}
	By non-uniform specification, each $E_n$ is a non-empty closed set.
	Hence $E$ is a non-empty closed set.

	We proceed to show that $\omega(x,f)$ contains a non-degenerate minimal set for all $x\in E$.
	Given $x\in E$, it suffices to show that $\omega(x,f)$ has no fixed point.
	Otherwise, there is a fixed point $x_0\in\omega(x,f)$.
	Then there exists a sufficiently large $k\in\mathbb{N}$ such that $f^kx\in B(x_0,\kappa)$.
	Write $k=2qm-r$, where $q\in\mathbb{N}$ and $0\leq r\leq 2m-1$.
	Using \eqref{eq-DC1-lemma3-proof-1}, we obtain that $f^{2qm}x=f^r(f^kx)\in B(x_0,\varepsilon)$ and $f^{2qm+m}x=f^{r+m}(f^kx)\in B(x_0,\varepsilon)$.
	By the definition of $E$, we have $f^{2qm}x\in\overline{B(y,\varepsilon)}$ and $f^{2qm+m}x\in\overline{B(z,\varepsilon)}$.
	Hence
	\begin{equation*}
		d(y,z)\leq d(y,f^{2qm}x)+d(f^{2qm}x,x_0)+d(x_0,f^{2qm+m}x)+d(f^{2qm+m}x,z)<4\varepsilon,
	\end{equation*}
	which contradicts $d(y,z)>5\varepsilon$.
	We conclude that $\omega(x,f)$ contains a non-degenerate minimal set and thus $(X,f)$ has a distal pair.
\end{proof}

Consider the one-sided full shift over two symbols $(\Sigma,\sigma)$, where $\Sigma=\{0,1\}^{\mathbb{N}_0}$.
For every irrational number $\alpha\in(0,1)$, let $\Sigma_\alpha\subset\Sigma$ be the Sturmian subshift of slope $\alpha$, obtained by coding the irrational rotation
\begin{equation*}
	R_\alpha(t)=t+\alpha\pmod 1
\end{equation*}
with respect to the partition $[0,1-\alpha)$ and $[1-\alpha,1)$.
It is well known that $\Sigma_\alpha$ is an infinite minimal subshift and is uniquely ergodic, and that its unique invariant measure $\eta_\alpha$ satisfies
\begin{equation*}
	\eta_\alpha([1])=\alpha,
\end{equation*}
where $[1]=\{\omega\in\Sigma:\omega_0=1\}$.
If $\Sigma_\alpha\cap\Sigma_\beta\neq\varnothing$, then the minimality of these two subshifts implies that $\Sigma_\alpha=\Sigma_\beta$.
Their unique invariant measures are therefore equal, and hence
\begin{equation*}
	\alpha=\eta_\alpha([1])=\eta_\beta([1])=\beta.
\end{equation*}
Consequently, the family
\begin{equation*}
	\{\Sigma_\alpha:\alpha\in(0,1)\setminus\mathbb{Q}\}
\end{equation*}
consists of uncountably many pairwise disjoint infinite minimal subshifts.
For more information on Sturmian sequences and the associated one-sided shift spaces, see \cite[\S~2.1]{Berthe-Holton-Zamboni-2006}.

\begin{proposition}\label{distal-prop4}
	Given a dynamical system $(X,f)$, let $N$ be a positive integer with $N\geq 2$ and $F$ be an $f^N$-invariant closed set.
	Assume that
	\begin{enumerate}
		\item $F$ can be written as a union of pairwise disjoint non-empty closed sets:
		\begin{equation*}
			F=\bigcup_{\omega\in\Sigma}F(\omega),
		\end{equation*}
		where $\Sigma:=\{0,1\}^{\mathbb{N}_0}$.
		\item The map $\pi:F\to\Sigma$ defined by
		\begin{equation*}
			x\in F(\omega)\mapsto\omega,\quad\forall\omega\in\Sigma,
		\end{equation*}
		is continuous.
		\item The shift action $\sigma:\Sigma\to\Sigma$ satisfies
		\begin{equation*}
			\pi\circ f^N=\sigma\circ\pi.
		\end{equation*}
	\end{enumerate}
	Then there exist uncountably many ergodic measures with non-degenerate minimal support in $\mathcal{M}_f^{erg}(Y)$, where $Y\subset X$ is the closed $f$-invariant set defined by
	\begin{equation*}
		Y=\bigcup_{n=0}^{N-1}f^n(F).
	\end{equation*}
\end{proposition}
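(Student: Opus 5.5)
Fix an irrational $\alpha\in(0,1)$ and pull the Sturmian subshift $\Sigma_\alpha$ back along $\pi$. Set $F_\alpha=\pi^{-1}(\Sigma_\alpha)=\bigcup_{\omega\in\Sigma_\alpha}F(\omega)$. Since $\pi$ is continuous and $\Sigma_\alpha$ is closed, $F_\alpha$ is a non-empty closed subset of $F$. Condition (3) together with $\sigma(\Sigma_\alpha)=\Sigma_\alpha$ shows that $F_\alpha$ is $f^N$-invariant. Then $Y_\alpha:=\bigcup_{n=0}^{N-1}f^n(F_\alpha)$ is a non-empty closed $f$-invariant subset of $Y$, because $f^N(F_\alpha)\subset F_\alpha$. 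By Zorn's lemma, pick an $f$-minimal set $M_\alpha\subset Y_\alpha$ and an ergodic measure $\mu_\alpha$ supported on it. By minimality, $\mathrm{supp}(\mu_\alpha)=M_\alpha$, and $\mu_\alpha\in\mathcal{M}_f^{erg}(Y)$.

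Next I show that $M_\alpha$ is non-degenerate. Since $M_\alpha$ is $f$-invariant and $Y_\alpha=\bigcup_n f^n(F_\alpha)$, the set $M_\alpha$ meets $f^n(F_\alpha)$ for some $n$. Hence $f^{N-n}(M_\alpha)\cap F_\alpha\supset f^{N-n}(M_\alpha\cap f^n F_\alpha)$ is non-empty, and so $M_\alpha\cap F_\alpha\neq\varnothing$ because $f(M_\alpha)=M_\alpha$. The set $M_\alpha\cap F$ is closed and $f^N$-invariant, and $\pi(M_\alpha\cap F_\alpha)$ is a non-empty closed $\sigma$-invariant subset of $\Sigma_\alpha$. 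By minimality of $\Sigma_\alpha$, it equals $\Sigma_\alpha$, which is infinite. Thus $M_\alpha$ is infinite, in particular non-degenerate.

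Finally I show there are uncountably many such measures, by proving $M_\alpha\neq M_\beta$ for $\alpha\neq\beta$. Suppose $M_\alpha=M_\beta=:M$. The key claim is that $M\cap F$ is small in the symbolic coordinate. For $x\in M\cap F$, the orbit closure of $x$ under $f^N$ inside $F$ projects onto a closed $\sigma$-invariant set. I would argue that $M\cap F=\bigcup_{k}(\text{closed } f^N\text{-invariant pieces})$: a minimal set of $f$ decomposes into at most $N$ minimal sets of $f^N$, say $M=\bigcup_{j<d}f^j(M_0)$ with $d\mid N$ and each $f^j(M_0)$ being $f^N$-minimal. Each $f^N$-minimal piece meeting $F$ lies in $F$ (it is the $f^N$-orbit closure of a point of $F$, and $F$ is closed and $f^N$-invariant). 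Its $\pi$-image is then a minimal subshift. So $\pi(M\cap F)$ is a union of at most $N$ minimal subshifts. Since $\pi(M_\alpha\cap F_\alpha)=\Sigma_\alpha$, the Sturmian subshift $\Sigma_\alpha$ is one of these at most $N$ minimal subshifts attached to $M$. The Sturmian subshifts are pairwise disjoint, so each $M$ can be $M_\alpha$ for at most $N$ values of $\alpha$. Hence the map $\alpha\mapsto M_\alpha$ is at most $N$-to-one, and uncountably many irrational $\alpha$ give uncountably many distinct minimal sets and therefore distinct ergodic measures $\mu_\alpha$.

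The main obstacle is this last step, specifically the decomposition of an $f$-minimal set into finitely many $f^N$-minimal sets, and checking that every $f^N$-minimal piece of $M$ that meets $F$ is contained in $F$ and projects to a minimal subshift. I would use the standard fact that for minimal $(M,f)$ the system $(M,f^N)$ has finitely many minimal components, each of the form $f^j(M_0)$, and that their union is $M$.
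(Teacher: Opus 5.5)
Your proof is correct and follows essentially the paper's argument: you pull back the pairwise disjoint Sturmian subshifts via $\pi$, show via minimality of $\Sigma_\alpha$ that the resulting minimal sets are infinite, and use the fact that an $f$-minimal set contains at most $N$ distinct $f^N$-minimal sets to get an at-most-$N$-to-one correspondence. The only difference is that you choose an $f$-minimal set inside $\bigcup_{n<N}f^n(\pi^{-1}(\Sigma_\alpha))$ directly, whereas the paper first chooses an $f^N$-minimal $M_\alpha\subset\pi^{-1}(\Sigma_\alpha)$ and spreads it to $\bigcup_{n<N}f^n(M_\alpha)$. The ``standard fact'' you flag as the main obstacle is exactly what the paper checks in a few lines: for an $f^N$-minimal $L\subset M$, the set $\bigcup_{n<N}f^n(L)$ is closed and $f$-invariant, hence equals $M$; each $f^n(L)$ is $f^N$-minimal; and any $f^N$-minimal subset of $M$ meets, and hence equals, one of them.
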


\begin{proof}
	By the preceding discussion on Sturmian subshifts, there exist an uncountable index set $I$ and minimal sequences $\omega^{(i)}\in\Sigma$, $i\in I$, such that the sets
	\begin{equation*}
		\overline{\mathrm{Orb}(\omega^{(i)},\sigma)},\quad i\in I,
	\end{equation*}
	are pairwise disjoint infinite minimal subshifts of $\Sigma$.
	For each $i\in I$, the set
	\begin{equation*}
		\pi^{-1}\big(\overline{\mathrm{Orb}(\omega^{(i)},\sigma)}\big)=\bigcup_{\xi\in\overline{\mathrm{Orb}(\omega^{(i)},\sigma)}}F(\xi)
	\end{equation*}
	is a non-empty closed $f^N$-invariant subset of $F$.
	Choose an $f^N$-minimal set
	\begin{equation*}
		M_i\subset\pi^{-1}\big(\overline{\mathrm{Orb}(\omega^{(i)},\sigma)}\big).
	\end{equation*}
	Since $\pi(M_i)$ is a non-empty closed $\sigma$-invariant subset of the minimal subshift $\overline{\mathrm{Orb}(\omega^{(i)},\sigma)}$, we have
	\begin{equation*}
		\pi(M_i)=\overline{\mathrm{Orb}(\omega^{(i)},\sigma)}.
	\end{equation*}
	In particular, $M_i$ is infinite.
	Moreover, the pairwise disjointness of the subshifts $\overline{\mathrm{Orb}(\omega^{(i)},\sigma)}$ implies that the sets $M_i$, $i\in I$, are pairwise disjoint.

	For every $i\in I$, define
	\begin{equation*}
		Y_i=\bigcup_{n=0}^{N-1}f^n(M_i)\subset Y.
	\end{equation*}
	Since $M_i$ is $f^N$-minimal, we have $f^N(M_i)=M_i$.
	Hence $Y_i$ is a closed $f$-invariant set.
	For any $x\in M_i$ and any $0\leq r\leq N-1$, the $f^N$-orbit of $x$ is dense in $M_i$, and therefore
	\begin{equation*}
		\overline{\mathrm{Orb}(f^rx,f)}=\bigcup_{n=0}^{N-1}f^n(M_i)=Y_i.
	\end{equation*}
	Thus $Y_i$ is an infinite, and hence non-degenerate, $f$-minimal set.

	We next observe that every $f$-minimal set contains at most $N$ distinct $f^N$-minimal subsets.
	Indeed, let $M$ be an $f$-minimal set and let $L\subset M$ be an $f^N$-minimal set.
	Then
	\begin{equation*}
		\bigcup_{n=0}^{N-1}f^n(L)
	\end{equation*}
	is a non-empty closed $f$-invariant subset of $M$, and hence
	\begin{equation*}
		M=\bigcup_{n=0}^{N-1}f^n(L).
	\end{equation*}
	Each $f^n(L)$ is $f^N$-minimal.
	If $L'\subset M$ is any $f^N$-minimal set, then $L'$ intersects some $f^n(L)$ and consequently $L'=f^n(L)$.
	This proves the observation.

	It follows that, for each fixed $f$-minimal set $M\subset Y$, there are at most $N$ indices $i\in I$ such that $Y_i=M$.
	Therefore the family $\{Y_i:i\in I\}$ contains uncountably many distinct $f$-minimal sets.
	Choose an uncountable subset $I'\subset I$ such that $Y_i\neq Y_j$ whenever $i,j\in I'$ and $i\neq j$.
	Since two distinct minimal sets are disjoint, the sets $Y_i$, $i\in I'$, are pairwise disjoint.

	For every $i\in I'$, choose an ergodic measure $\lambda_i\in\mathcal{M}_f^{erg}(Y_i)$.
	The minimality of $Y_i$ implies that
	\begin{equation*}
		\mathrm{supp}(\lambda_i)=Y_i.
	\end{equation*}
	Thus $\{\lambda_i:i\in I'\}$ is an uncountable family of pairwise distinct ergodic measures with non-degenerate minimal support contained in $\mathcal{M}_f^{erg}(Y)$.
	This completes the proof.
\end{proof}

\begin{theorem}\label{distal-thm5}
	Suppose that $(X,f)$ satisfies the non-uniform specification property with gap function $M(n,\varepsilon)$.
	Then there exist uncountably many ergodic measures with non-degenerate minimal support.
	Moreover, if
	\begin{equation*}
		\tau:=\sup_{\varepsilon>0}\liminf_{n\rightarrow\infty}\frac{M(n,\varepsilon)}{n}<\infty,
	\end{equation*}
	then for any $\mu\in\mathcal{M}_f^{erg}(X)$ and any $\varepsilon>0$, there exist uncountably many ergodic measures with non-degenerate minimal support contained in
	\begin{equation*}
		\mathcal{B}\Big(\mu,\frac{\tau}{1+\tau}\mathrm{diam}(X)+\varepsilon\Big).
	\end{equation*}
	As a result,
	\begin{equation*}
		\{\nu\in\mathcal{M}_f^{erg}(X):G_{\nu}\;\text{contains a distal pair}\}\cap\mathcal{B}\Big(\mu,\frac{\tau}{1+\tau}\mathrm{diam}(X)+\varepsilon\Big)
	\end{equation*}
	contains uncountably many elements.
\end{theorem}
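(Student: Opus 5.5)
The plan is to build, via the non-uniform specification property, an $f^N$-invariant closed set $F$ satisfying the hypotheses of Proposition~\ref{distal-prop4}, and then read off the conclusions.

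\textbf{First assertion.} As in the proof of Lemma~\ref{distal-lemma3}, fix $y,z\in X$ and $\varepsilon>0$ with $d(y,z)>5\varepsilon$. Let $m=M(1,\varepsilon)$ and $N=m+1$, so that specification applied to single points spaced $N$ apart is admissible. For $\omega\in\Sigma$ define
\begin{equation*}
	F(\omega)=\bigcap_{i\geq0}f^{-iN}\big(\overline{B(p_{\omega_i},\varepsilon)}\big),\qquad p_0=y,\;p_1=z.
\end{equation*}
Each finite intersection is non-empty by specification, so $F(\omega)$ is non-empty and closed by compactness. Since the balls $\overline{B(y,\varepsilon)}$ and $\overline{B(z,\varepsilon)}$ are disjoint, the sets $F(\omega)$ are pairwise disjoint. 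Put $F=\bigcup_\omega F(\omega)$.

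This $F$ is closed, and the coding map $\pi$ is continuous: the $i$-th coordinate is determined by which of two disjoint closed sets contains $f^{iN}x$. We also have $f^N(F(\omega))\subset F(\sigma\omega)$, so $F$ is $f^N$-invariant and $\pi\circ f^N=\sigma\circ\pi$. Proposition~\ref{distal-prop4} then gives uncountably many ergodic measures with non-degenerate minimal support.

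\textbf{Second assertion.} Now assume $\tau<\infty$, and fix an ergodic $\mu$ and $\varepsilon>0$. Choose $\varepsilon'\ll\varepsilon$ with $d(y,z)>5\varepsilon'$. By the definition of $\tau$, there are arbitrarily large $n$ with $M(n,\varepsilon')\leq(\tau+\varepsilon)n$.

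Pick a $\mu$-generic point $x$ and $n$ large so that $\rho(\delta_x^n,\mu)<\varepsilon$. Let the gap be $g=M(n,\varepsilon')$, and set $N=2(n+g)$. Build $F$ as above, but let each block of length $N$ consist of:
\begin{itemize}
	\item an orbit segment of length $n$ shadowing $x$;
	\item a gap of length $g$;
	\item a stretch that shadows $y$ or $z$, according to $\omega_i$.
\end{itemize}
Finite concatenations exist by specification, and the separation of the $\omega_i$-marker again makes the $F(\omega)$ disjoint with $\pi$ continuous.

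Any $f$-invariant measure $\lambda$ supported on $Y=\bigcup_{k<N}f^k(F)$ is a limit of empirical measures along orbits that spend a fraction of at least about $n/(n+g)\geq 1/(1+\tau+\varepsilon)$ of their time $\varepsilon'$-shadowing $x$'s segment. By Lemma~\ref{pre-metric-lemma2} and Proposition~\ref{pre-metric-prop1}, this gives
\begin{equation*}
	\rho(\lambda,\mu)\leq \varepsilon'+\varepsilon+\frac{\tau+\varepsilon}{1+\tau+\varepsilon}\mathrm{diam}(X)+o(1)<\frac{\tau}{1+\tau}\mathrm{diam}(X)+C\varepsilon,
\end{equation*}
and rescaling $\varepsilon$ gives the stated bound. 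Applying Proposition~\ref{distal-prop4} to this $F$ yields uncountably many ergodic measures with non-degenerate minimal support in the ball.

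\textbf{Main obstacle.} The delicate point is the marker stretch. It must still be short, say one point plus its own gap $M(1,\varepsilon')$, so its proportion within a block vanishes as $n\to\infty$. The block length $N$ and the spacing must be arranged so that the specification gaps are admissible for every segment. Since $M(n,\cdot)$ is non-decreasing in $n$, padding short segments is safe, though the padding has to be tracked.

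\textbf{Final assertion.} This follows from Lemma~\ref{distal-lemma2}: each such measure has non-degenerate minimal support, so its generic set contains a distal pair.
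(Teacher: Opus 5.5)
Your strategy matches the paper's. You use specification to build an $f^N$-invariant closed set $F=\bigcup_{\omega\in\Sigma}F(\omega)$ with a continuous coding and apply Proposition~\ref{distal-prop4}. You then bound the distance to $\mu$ of every ergodic measure on $Y$ by the proportion of each period spent shadowing a $\mu$-generic orbit, and conclude with Lemma~\ref{distal-lemma2}.

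The argument needs one repair. With $N=2(n+g)$ as written, at most a proportion $n/(2(n+g))\le 1/2$ of each period shadows $x$, so the bound you claim fails. The block must be exactly what your ``main obstacle'' paragraph describes: the $x$-segment of length $n$, the gap $g=M(n,\varepsilon')$, one marker point, then the gap $M(1,\varepsilon')$. That gives $N=n+g+M(1,\varepsilon')$. Since $M(1,\varepsilon')$ stays fixed as $n\to\infty$, the non-shadowing proportion tends to at most $(\tau+\varepsilon)/(1+\tau+\varepsilon)$.

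A smaller point: ``every invariant $\lambda$ on $Y$ is a limit of such empirical measures'' is literally true only for ergodic $\lambda$. That is enough, because a $\nu$-generic point in $f^r(F)$ is moved into $F$ by $f^{N-r}$. Alternatively, pass to general $\lambda$ by ergodic decomposition, as the paper does.

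The remaining differences from the paper are bookkeeping.
\begin{itemize}
\item The paper uses no separate marker. It constructs two distinct points $y_0,y_1$, close to distinct $z_0,z_1$, whose orbits shadow a fixed $\mu$-generic point forever from time $M(1,\delta)$ on. Each block is then a length-$n_0$ shadow of $y_{\omega_j}$ followed by the gap $M(n_0,\varepsilon_0)$. The symbol is read at the start of the block, and the marker cost is absorbed into the choice $n_0\ge N_0$.
\item The paper uses one construction for both assertions, taking any $n_0$ when $\tau=\infty$. Your separate two-ball construction for the first assertion is simpler and does not involve $\mu$ at all.
\item You use only the upper bound $M(n,\varepsilon')\le(\tau+\varepsilon)n$, which is all the estimate needs. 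The paper's two-sided control of $M(n_0,\varepsilon_0)/n_0$ near $\tau$ plays no role in this estimate.
\end{itemize}
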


\begin{proof}
	Given $\mu\in\mathcal{M}_f^{erg}(X)$ and $\varepsilon>0$, we aim to find positive integer $N\geq 2$ and closed $f^N$-invariant subset $F\subset X$ such that conditions~(1)--(3) in Proposition~\ref{distal-prop4} hold.
	Moreover, if $\tau<\infty$, we will show that the closed $f$-invariant set $Y$ defined by $Y:=\bigcup_{n=0}^{N-1}f^n(F)$ satisfies
	\begin{equation*}
		\mathcal{M}_f(Y)\subset\mathcal{B}\Big(\mu,\frac{\tau}{1+\tau}\mathrm{diam}(X)+\varepsilon\Big).
	\end{equation*}
	Then the conclusion of Theorem~\ref{distal-thm5} holds by applying Proposition~\ref{distal-prop4} and Lemma~\ref{distal-lemma2}.

	Since $\mu$ is ergodic, we can choose a point $y\in G_\mu$.
	Fix two distinct points $z_0,z_1\in X$ and take $0<\delta<\min\{\varepsilon/8,d(z_0,z_1)/3\}$.
	Let $m=M(1,\delta)$.
	By non-uniform specification, for any $l\in\mathbb{N}$ and each $s=0,1$, there exists $y_s^{(l)}\in X$ such that
	\begin{equation*}
		d(y_s^{(l)},z_s)\leq\delta \quad\text{and}\quad d(f^{m+i}y_s^{(l)},f^iy)\leq\delta,\qquad \forall0\leq i\leq l-1.
	\end{equation*}
	For each $s=0,1$, let $y_s$ be an accumulation point of $\{y_s^{(l)}:l\in\mathbb{N}\}$.
	Then
	\begin{equation*}
		d(y_s,z_s)\leq\delta \quad\text{and}\quad d(f^{m+i}y_s,f^iy)\leq\delta,\qquad \forall i\in\mathbb{N}_0.
	\end{equation*}
	Since $d(y_0,y_1)\geq d(z_0,z_1)-2\delta>0$, $y_0$ and $y_1$ are distinct.
	By Proposition~\ref{pre-metric-prop1}, for any $n>m$ and $s=0,1$, we have
	\begin{equation*}
		\rho(\delta_{y_s}^n,\mu)\leq\frac{m}{n}\operatorname{diam}(X)+\delta+\rho(\delta_y^{n-m},\mu).
	\end{equation*}
	Since $y\in G_\mu$ and $\delta<\varepsilon/8$, we can choose $N_0\in\mathbb{N}$ sufficiently large such that
	\begin{equation}\label{eq-distal-thm5-proof-1}
		\delta_{y_s}^n\in\mathcal{B}(\mu,\varepsilon/4),\quad\forall\,n\geq N_0,\,s=0,1.
	\end{equation}
	Take $\varepsilon_0>0$ sufficiently small such that $\varepsilon_0<\min\{1/2,d(y_0,y_1)/3,\varepsilon/4\}$.
	If $\tau=\infty$, then we set $n_0=N_0$ and $m_0=M(n_0,\varepsilon_0)$.
	If $\tau<\infty$, then we can choose $\theta>0$ sufficiently small such that
	\begin{equation}\label{eq-distal-thm5-proof-2}
		\frac{\tau+2\theta}{1+\tau+2\theta}\mathrm{diam}(X)<\frac{\tau}{1+\tau}\mathrm{diam}(X)+\frac{\varepsilon}{4}.
	\end{equation}
	Moreover, since gap function $M(n,\varepsilon')$ is non-increasing in $\varepsilon'$, we may replace $\varepsilon_0$ by a smaller one such that for any $0<\varepsilon'\leq\varepsilon_0$,
	\begin{equation*}
		\bigg|\liminf_{n\to\infty}\frac{M(n,\varepsilon')}{n}-\tau\bigg|<\theta.
	\end{equation*}
	Then we choose $n_0\geq N_0$ sufficiently large such that
	\begin{equation*}
		\bigg|\frac{M(n_0,\varepsilon_0)}{n_0}-\tau\bigg|<2\theta
	\end{equation*}
	and let $m_0=M(n_0,\varepsilon_0)$.
	Then
	\begin{equation}\label{eq-distal-thm5-proof-3}
		(\tau-2\theta)n_0\leq m_0\leq (\tau+2\theta)n_0.
	\end{equation}
	For every non-negative integer $j$, let $a(j)=j(n_0+m_0)$ and $b(j)=j(n_0+m_0)+n_0-1$.

	Let $\Sigma=\{0,1\}^{\mathbb{N}_0}$.
	For any $\omega=(\omega_0,\omega_1,\cdots)\in\Sigma$ and any $k\in\mathbb{N}$, define
	\begin{equation*}
		F_k(\omega)=\bigcap_{j=0}^{k-1}\bigcap_{i=a(j)}^{b(j)}f^{-i}\Big(\overline{B(f^{i-a(j)}y_{\omega_j},\varepsilon_0)}\Big).
	\end{equation*}
	From the non-uniform specification property, it follows that each $F_k(\omega)$ is a non-empty closed set.
	Similarly, for any $l\in\mathbb{N}$, we can define $F_k(\zeta)$ for $\zeta\in\{0,1\}^l$ and $1\leq k\leq l$.
	For any $l\geq 1$, define
	\begin{equation*}
		F_l=\bigcup_{\zeta\in\{0,1\}^l}F_l(\zeta)
	\end{equation*}
	and $F=\bigcap_{l\geq1}F_l$.
	Then each $F_l$ is non-empty and closed, satisfying $F_{l+1}\subset F_l$.
	Hence $F$ is a non-empty closed set.
	Define
	\begin{equation}
		F(\omega)=\bigcap_{k\geq1}F_k(\omega).
	\end{equation}
	Clearly, $\{F_k(\omega)\}_{k=1}^{\infty}$ is a decreasing sequence of closed sets.
	Hence for any $\omega\in\Sigma$, $F(\omega)$ is a non-empty closed set.
	It is easy to see that
	\begin{equation}
		F=\bigcup_{\omega\in\Sigma}F(\omega).
	\end{equation}
	Moreover, since $\varepsilon_0<d(y_0,y_1)/3$, one can easily see that $\{F(\omega):\omega\in\Sigma\}$ are pairwise disjoint.
	From the construction of $F(\omega)$ and $F_k(\omega)$, for any $\omega\in\Sigma$ and any $i\in\mathbb{N}_0$, we have $f^{i(n_0+m_0)}(F(\omega))\subset F(\sigma^i\omega)$, where $\sigma:\Sigma\to\Sigma$ is the shift map.
	Let $N=n_0+m_0$.
	Then $F$ is a closed $f^N$-invariant set.
	Hence conditions~(1) and~(3) of Proposition~\ref{distal-prop4} hold.

	We proceed to show (2).
	For any $l\in\mathbb{N}$ and $\zeta\in\{0,1\}^l$, let
	\begin{equation*}
		[\zeta]=\{\omega\in\Sigma:\omega_i=\zeta_i,\;\forall\,0\leq i\leq l-1\}.
	\end{equation*}
	Since $\varepsilon_0<d(y_0,y_1)/3$, the sets $F_l(\xi)$, $\xi\in\{0,1\}^l$, are pairwise disjoint.
	Hence
	\begin{equation*}
		\pi^{-1}([\zeta])=F\cap F_l(\zeta).
	\end{equation*}
	Since $F_l(\zeta)$ is closed and the complement of $F\cap F_l(\zeta)$ in $F$ is a finite union of closed sets, $\pi^{-1}([\zeta])$ is open in $F$.
	Since the cylinder sets form a basis of $\Sigma$, we conclude that $\pi:x\in F(\omega)\mapsto \omega$ is continuous.
	Hence condition~(2) of Proposition~\ref{distal-prop4} holds.

	Let $Y:=\bigcup_{n=0}^{N-1}f^n(F)$, then $Y$ is a closed $f$-invariant subset.
	By Proposition~\ref{distal-prop4}, $\mathcal{M}_f(Y)$ contains uncountably many ergodic measures with non-degenerate minimal support.
	This completes the proof of Theorem~\ref{distal-thm5} for the case $\tau=\infty$.
	It remains to show that for the case $\tau<\infty$,
	\begin{equation*}
		\mathcal{M}_f(Y)\subset\mathcal{B}\Big(\mu,\frac{\tau}{1+\tau}\mathrm{diam}(X)+\varepsilon\Big).
	\end{equation*}
	Given $\nu\in\mathcal{M}_f^{erg}(Y)$, there exists $0\leq r\leq N-1$ such that $\nu(f^r(F))>0$.
	Hence we can choose $x\in G_{\nu}\cap f^r(F)$.
	Since every forward iterate of a $\nu$-generic point is still $\nu$-generic, replacing $x$ by $f^{N-r}x$, we may assume that $x\in G_\nu\cap F$.
	Take $\omega\in\Sigma$ with $x\in F(\omega)$.
	Then by the construction of $F(\omega)$, Proposition~\ref{pre-metric-prop1}, \eqref{eq-distal-thm5-proof-1}, \eqref{eq-distal-thm5-proof-2} and \eqref{eq-distal-thm5-proof-3}, for any $k\in\mathbb{N}$, we obtain that
	\begin{equation*}
		\begin{split}
			\rho(\delta_x^{kN},\mu)&\leq\frac{n_0}{k(n_0+m_0)}\sum_{j=0}^{k-1}\rho\bigg(\frac{1}{n_0}\sum_{i=a(j)}^{b(j)}\delta_{f^i(x)},\mu\bigg)+\frac{m_0}{n_0+m_0}\mathrm{diam}(X)\\
			&\leq\frac{n_0}{k(n_0+m_0)}\sum_{j=0}^{k-1}\rho\bigg(\frac{1}{n_0}\sum_{i=a(j)}^{b(j)}\delta_{f^{i-a(j)}(y_{\omega_j})},\mu\bigg)+\varepsilon_0+\frac{m_0}{n_0+m_0}\mathrm{diam}(X)\\
			&<\frac{\varepsilon}{4}+\frac{\varepsilon}{4}+\frac{\tau+2\theta}{1+\tau+2\theta}\mathrm{diam}(X)\leq\frac{\tau}{1+\tau}\mathrm{diam}(X)+\frac{3}{4}\varepsilon.
		\end{split}
	\end{equation*}
	This implies that
	\begin{equation*}
		\rho(\nu,\mu)\leq\frac{\tau}{1+\tau}\mathrm{diam}(X)+\frac{3}{4}\varepsilon.
	\end{equation*}
	We conclude that
	\begin{equation*}
		\mathcal{M}_f^{erg}(Y)\subset\overline{\mathcal{B}\Big(\mu,\frac{\tau}{1+\tau}\mathrm{diam}(X)+\frac{3}{4}\varepsilon\Big)}.
	\end{equation*}
	Indeed, for any $\lambda\in\mathcal{M}_f(Y)$, let $\kappa_\lambda$ be its ergodic decomposition.
	Then
	\begin{equation*}
		\rho(\lambda,\mu)\leq\int_{\mathcal{M}_f^{erg}(Y)}\rho(\eta,\mu)\,\mathrm{d}\kappa_\lambda(\eta)\leq\frac{\tau}{1+\tau}\mathrm{diam}(X)+\frac{3}{4}\varepsilon.
	\end{equation*}
	It follows from the ergodic decomposition theorem that
	\begin{equation*}
		\mathcal{M}_f(Y)\subset\overline{\mathcal{B}\Big(\mu,\frac{\tau}{1+\tau}\mathrm{diam}(X)+\frac{3}{4}\varepsilon\Big)}\subset\mathcal{B}\Big(\mu,\frac{\tau}{1+\tau}\mathrm{diam}(X)+\varepsilon\Big).
	\end{equation*}
	This completes the proof.
\end{proof}

\begin{corollary}
	Suppose that $(X,f)$ satisfies the specification property.
	Then for any $\varepsilon>0$ and any invariant measure $\mu\in\mathcal{M}_f(X)$, $\mathcal{B}(\mu,\varepsilon)$ contains uncountably many ergodic measures with non-degenerate minimal support.
\end{corollary}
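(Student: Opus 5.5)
The corollary follows from Theorem~\ref{distal-thm5} once we observe that under the specification property the admissible gap function $M(\varepsilon)$ does not depend on $n$. Hence for every $\varepsilon>0$ we have $\liminf_{n\to\infty}M(\varepsilon)/n=0$, so $\tau=0$ and $\frac{\tau}{1+\tau}\mathrm{diam}(X)=0$. Theorem~\ref{distal-thm5} then gives, for any ergodic $\mu$ and any $\varepsilon>0$, uncountably many ergodic measures with non-degenerate minimal support in $\mathcal{B}(\mu,\varepsilon)$. The only remaining issue is that the corollary allows $\mu\in\mathcal{M}_f(X)$ to be non-ergodic.

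To pass from ergodic to arbitrary invariant measures, I would use the standard fact that under the specification property (indeed under non-uniform specification with $\tau=0$), ergodic measures are dense in $\mathcal{M}_f(X)$. If I wanted to avoid citing this, I would argue as follows. By the ergodic decomposition and Proposition~\ref{pre-metric-prop1}, $\mu$ is approximated within $\varepsilon/3$ by a finite convex combination $\sum_{i=1}^k s_i\mu_i$ of ergodic measures with rational weights. I would then pick generic points $y_i\in G_{\mu_i}$. Using specification with gap $m=M(\varepsilon_0)$, I would concatenate orbit segments of $y_i$ of lengths proportional to $s_i n$, for $n$ large, separated by gaps of length $m$. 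Repeating this block periodically, as in the proof of Theorem~\ref{distal-thm5}, yields a closed $f^N$-invariant set on which every point has $kN$-step empirical measures within $\varepsilon/3$ of $\sum_i s_i\mu_i$. Here $N$ is the block length plus gaps, and the gap fraction $km/N$ is made small by taking $n$ large, which is exactly where $\tau=0$ is used.

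A cleaner route that avoids density altogether is to rerun the proof of Theorem~\ref{distal-thm5} with the segment of $y$ replaced by such a concatenated block. In that proof, ergodicity of $\mu$ is used only to get a point $y$ whose empirical measures $\delta_y^{n}$ converge to $\mu$. For non-ergodic $\mu$, I would instead use a point $y$ whose empirical measures along a sequence $n_0\to\infty$ lie in $\mathcal{B}(\mu,\varepsilon/4)$. Such a $y$ exists by specification: concatenate longer and longer segments of the generic points $y_i$ in proportions $s_i$ with bounded gaps, and take a limit point as in the construction of $y_s$. After this change, the rest of the argument goes through verbatim with $\tau=0$: the sets $F(\omega)$, the conjugacy to the full shift, and Proposition~\ref{distal-prop4} together give uncountably many ergodic measures with non-degenerate minimal support in $\mathcal{M}_f(Y)\subset\mathcal{B}(\mu,\varepsilon)$.

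The main obstacle is this non-ergodic step: ensuring that the empirical measures of $y_0$ and $y_1$ at the chosen scale $n_0$ are close to $\mu$. This requires coordinating the choice of $n_0$ with the proportions $s_i$ and the fixed gap $m$. The estimate~\eqref{eq-distal-thm5-proof-1} then holds at $n=n_0$, which is the only place it is used. With that in hand, the final estimate in the proof of Theorem~\ref{distal-thm5} carries over with $m_0/(n_0+m_0)\to0$.
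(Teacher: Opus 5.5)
Your argument is correct, and its main line is the paper's own proof: under specification the gap function does not depend on $n$, so $\tau=0$ and Theorem~\ref{distal-thm5} gives the claim in $\mathcal{B}(\mu_0,\varepsilon/2)$ for any ergodic $\mu_0$; density of ergodic measures under specification \cite{DGS1976} then supplies an ergodic $\mu_0$ with $\rho(\mu,\mu_0)<\varepsilon/2$, so that $\mathcal{B}(\mu_0,\varepsilon/2)\subset\mathcal{B}(\mu,\varepsilon)$. The self-contained alternatives you sketch are sound but not needed: either prove density by periodic concatenation, or rerun the proof of Theorem~\ref{distal-thm5} with a point whose empirical measures approach $\mu$ along a subsequence. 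The latter is harmless here because $M(n_0,\varepsilon_0)/n_0=M(\varepsilon_0)/n_0\to0$ for every large $n_0$.
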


\begin{proof}
	It follows from \cite[Propositions~21.3 and~21.8]{DGS1976} that, for every dynamical system with the specification property, ergodic measures are dense in the space of invariant measures.
	Moreover, specification with gap function $M(\varepsilon)$ implies non-uniform specification with gap function $M(n,\varepsilon):=M(\varepsilon)$.
	In particular,
	\begin{equation*}
		\tau=\sup_{\varepsilon'>0}\liminf_{n\to\infty}\frac{M(n,\varepsilon')}{n}=0.
	\end{equation*}
	Given $\mu\in\mathcal{M}_f(X)$ and $\varepsilon>0$, take $\mu_0\in\mathcal{M}_f^{erg}(X)$ such that $\rho(\mu,\mu_0)<\varepsilon/2$.
	Then $\mathcal{B}(\mu_0,\varepsilon/2)\subset\mathcal{B}(\mu,\varepsilon)$.
	By Theorem~\ref{distal-thm5}, the ball $\mathcal{B}(\mu_0,\varepsilon/2)$ contains uncountably many ergodic measures with non-degenerate minimal support, and so does $\mathcal{B}(\mu,\varepsilon)$.
	This completes the proof.
\end{proof}

\section{Distributional chaos}\label{Sect-DC1}

In this section, we prove Theorem~\ref{thm-DC1-1}, Corollary~\ref{cor-DC1}, Theorem~\ref{thm-DC1-2} and Theorem~\ref{thm-DC1-3}.

\subsection{Proof of Theorem~\ref{thm-DC1-1} and Corollary~\ref{cor-DC1}}

According to Proposition~\ref{pre-saturated-prop5}, if $(X,f)$ satisfies the non-uniform specification property, then every non-empty over-saturated set contains $G_{\mathcal{K}}$, where $\mathcal{K}$ is the non-empty connected closed subset of $\mathcal{M}_f(X)$ defined by \eqref{eq-pre-saturated-prop5}.
Therefore Theorem~\ref{thm-DC1-1} is a direct consequence of the following theorem.

\begin{theorem}\label{DC1-thm1}
	Suppose that $(X,f)$ satisfies the non-uniform specification property.
	Then for any non-empty open set $U\subset X$, there exists an uncountable DC1-scrambled set $S\subset G_{\mathcal{K}}\cap U\cap\mathrm{Trans}(f)$.
\end{theorem}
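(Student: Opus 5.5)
We construct the scrambled set by a Cantor-type specification gluing indexed by $\Sigma=\{0,1\}^{\mathbb{N}_0}$, in the spirit of Chen--Tian. First, by Lemma~\ref{distal-lemma3} (or Theorem~\ref{distal-thm5}) we fix a distal pair $\{p_0,p_1\}$, with $\inf_n d(f^np_0,f^np_1)=:5t_0>0$. By Proposition~\ref{pre-saturated-prop5}, $\mathcal{K}$ is compact and connected, and $G_{\mathcal{K}}=G^{\mathcal{K}}\neq\varnothing$. We also fix a sequence $\{\nu_j\}\subset\mathcal{K}$ as in Lemma~\ref{pre-saturated-lemma1}, together with a point $x^*\in G_{\mathcal{K}}$, a transitive orbit (which exists since $G^{\mathcal K}$ is residual by Lemma~\ref{pre-saturated-lemma2}), a point $u\in U$ with a radius $r$ such that $B(u,2r)\subset U$, and a dense sequence $\{w_k\}\subset X$.

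The points are built as follows. Choose rapidly increasing lengths $n_1\ll n_2\ll\cdots$ and precisions $\varepsilon_k\downarrow0$, and let $M_k$ be the gap $M(\cdot,\varepsilon_k)$ evaluated at the current block length. For $\xi\in\Sigma$, the point $z_\xi$ shadows a concatenation of stages, with gaps of the prescribed size between consecutive orbit segments. The opening segment is $u$ up to precision $r$. Stage $k$ consists of three parts. First, a segment of $w_k$ of length $1$ (this gives transitivity). Second, a long segment of $x^*$ of length $n_k$ (a "common" block). Third, a long segment of $p_{\xi_{\ell}}$ of length $n_k$, where $\ell=\ell(k)$ runs through each index infinitely often, say $\ell(k)$ is the first coordinate of a bijection $\mathbb N\to\mathbb N\times\mathbb N$. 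The precisions are shrunk so that within stage $k$ each point is tracked up to $\varepsilon_k$. A nested closed-set / compactness argument, as in the proof of Theorem~\ref{distal-thm5}, then produces $z_\xi$, and these points lie in $U$.

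The key quantitative requirement is that each new block has length $n_k$ dominating everything before it, including all accumulated gaps $\sum_{i\le k}M_i$, so that the block occupies a proportion tending to $1$ of the time up to its end. This is possible because $M(n,\varepsilon)$ depends only on the previous block length, so gaps are fixed before $n_k$ is chosen. This yields the three required properties.

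\textbf{(a) $G_{\mathcal K}$.} Choose $n_k$ so that the $x^*$ block makes $\delta^{T}_{z_\xi}$, at its end, close to $\delta_{x^*}^{n_k}$. Lemma~\ref{pre-metric-lemma2} gives the distance bound $\varepsilon_k+o(1)$. Since $V_f(x^*)=\mathcal{K}$, truncating the $x^*$ block at suitable intermediate times realizes every $\nu_j$ as an accumulation point (the empirical measures along the block approximate those of $x^*$ up to a vanishing error, given prefix domination). Hence $\mathcal K\subset V_f(z_\xi)\subset\mathcal K$. The $p$-blocks do no harm because the maximality of $\mathcal{K}$ gives the reverse inclusion.

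\textbf{(b) Limsup $=1$.} At the ends of the $x^*$ blocks, $z_\xi$ and $z_\eta$ are both $\varepsilon_k$-close to the same orbit on a proportion tending to $1$ of times, so the upper density of $\{d<t\}$ equals $1$ for every $t$.

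\textbf{(c) Liminf $=0$.} Distinct $\xi\neq\eta$ differ at some coordinate $\ell$, and infinitely many stages use $\ell$. In those stages one point follows $p_0$ and the other $p_1$, so $d(f^jz_\xi,f^jz_\eta)\ge 5t_0-2\varepsilon_k>t_0$ throughout a block of dominant proportion, giving lower density $0$ for $t_0$.

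Finally, $\xi\mapsto z_\xi$ is injective by (c), so the scrambled set is uncountable. If one needs to avoid relying on $\ell(k)$ distinguishing every pair, restrict to the uncountable set of $\xi$ pairwise differing infinitely often; the bijection scheme already handles this.

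I expect the main obstacle to be the bookkeeping in the non-uniform setting. The gap after a block grows with that block's length, so the gap following a huge $x^*$ block may itself be huge and could spoil the density statements at the start of the next block. The fix I would try first is to measure densities only at the ends of blocks and to choose each subsequent block long enough that the preceding gap becomes negligible, since $M(n_k,\varepsilon_k)$ is a fixed number once $n_k$ is fixed. A second subtle point is ensuring that $z_\xi$ achieves every $\nu_j$ as a limit rather than just being near $\mathcal K$. The plan here is to exploit intermediate times within the $x^*$ blocks, where $x^*$ itself approaches $\nu_j$ infinitely often.
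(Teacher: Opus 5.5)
Your architecture is the paper's: a block along a fixed point of $G_{\mathcal K}$ common to all $\xi$, truncated at intermediate times to reach each $\nu_j$; blocks along $p_{\xi_\ell}$ at $\xi$-independent times; a transitivity anchor; every block dominating its prefix; densities read off at block ends; and $V_f(z_\xi)\subset\mathcal K$ for free. Using one $p$-block per stage via $\ell(k)$, and a dense sequence $\{w_k\}$ instead of a transitive point, are harmless variations. (Residuality of $G^{\mathcal K}$ does not by itself give a transitive point, but you never use one.)

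The step that does not work as described is the existence of $z_\xi$.
\begin{itemize}
\item The nested closed-set argument of Theorem~\ref{distal-thm5} works only because every segment is shadowed at one fixed precision $\varepsilon_0$.
\item Non-uniform specification only ever produces a point shadowing all segments at a single precision, with all gaps computed at that precision.
\item You genuinely need $\varepsilon_k\downarrow 0$. With a fixed $\varepsilon$, your argument for (b) gives nothing for $t\le 2\varepsilon$, and (a) and transitivity also use $\varepsilon_k\to 0$.
\item With shrinking precisions, the finite-stage sets of points that $\varepsilon_k$-shadow stage $k$ for every $k\le K$ are not known to be non-empty. One application of specification at precision $\varepsilon_K$ would require every earlier gap to be at least $M(\cdot,\varepsilon_K)$, which exceeds the gaps $M(\cdot,\varepsilon_k)$ you reserved.
\end{itemize}

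The missing device is the paper's inductive Cauchy construction. Given an approximant $z_{\xi,k}$ built through the end $T_k$ of stage $k$, apply specification at precision $\varepsilon_{k+1}$ to the single segment $z_{\xi,k},f(z_{\xi,k}),\dots,f^{T_k}(z_{\xi,k})$, followed by the stage-$(k+1)$ segments. Then $d(f^iz_{\xi,k+1},f^iz_{\xi,k})\le\varepsilon_{k+1}$ for $0\le i\le T_k$. The sequence is therefore Cauchy, and its limit tracks stage $k$ within $\sum_{j\ge k}\varepsilon_j$.

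The cost is that the gap before stage $k+1$ must be $M(T_k+1,\varepsilon_{k+1})$. This depends on the whole elapsed time, not only on the last block length, so your stated reason (gaps depend only on the previous block) is not the right one. The argument survives because this gap is still fixed before the next block length is chosen. You can therefore still make every block dominate its prefix, and (a)--(c) go through as you describe.

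One further correction: the $p$-block of stage $k$ must be much longer than the $x^*$-block of the same stage plus the gap $M(n_k,\varepsilon_k)$ after it. So the two blocks cannot both have length $n_k$ as you wrote.
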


\begin{proof}
	Let $M(n,\varepsilon)$ be an admissible gap function.
	Fix a distal pair $\{p_0,p_1\}$.
	Let
	\begin{equation}\label{eq-DC1-thm1-proof-1}
		\eta=\frac13\inf_{i\geq 0}d(f^ip_0,f^ip_1)>0.
	\end{equation}
	Choose a sequence $\{\nu_j\}_{j=1}^{\infty}$ in $\mathcal{K}$ satisfying
	\begin{equation}\label{eq-DC1-thm1-proof-2}
		\overline{\{\nu_j:j\geq n\}}=\mathcal{K},\quad\forall n\in\mathbb{N}.
	\end{equation}
	Fix $y\in G_{\mathcal{K}}$ and for each $j\geq1$, take an increasing sequence $\{n_j(l)\}_{l=1}^{\infty}$ such that
	\begin{equation}\label{eq-DC1-thm1-proof-3}
		\lim_{l\rightarrow\infty}\delta_y^{n_j(l)}=\nu_j.
	\end{equation}
	
	Arbitrarily given an open set $U\subset X$.
	According to \cite[Theorem~3.2]{Lin-Tian-Yu-2024}, $(X,f)$ is topologically transitive.
	Hence $\mathrm{Trans}(f)$ is residual in $X$.
	Therefore we can choose a $z\in\mathrm{Trans}(f)\cap U$.
	Arbitrarily given $0<\varepsilon<\eta$ such that $\overline{B(z,\varepsilon)}\subset U$, where $B(z,\varepsilon)$ denotes the open ball with center $z$ and radius $\varepsilon$.
	Let $\varepsilon_i=\varepsilon/2^i$ for $i\in\mathbb{Z}$.
	Now we define:
	\begin{itemize}
		\item sequences of non-negative integers $\{a_j\}_{j=1}^{\infty}$, $\{b_j\}_{j=1}^{\infty}$, and $\{c_j\}_{j=1}^{\infty}$;
		\item arrays of non-negative integers  $\{l_{jr}:j\in\mathbb{N},1\leq r\leq j\}$, $\{s_{jr}:j\in\mathbb{N},1\leq r\leq j\}$, and $\{t_{jr}:j\in\mathbb{N},1\leq r\leq j\}$.
	\end{itemize}
	
	Let $a_1=0$ and $b_1=a_1+M(1,\varepsilon_1)$.
	Choose $l_{11}\in\mathbb{N}$ such that $n_1(l_{11})>b_1^2$ and let
	\begin{equation*}
		c_1=b_1+n_1(l_{11}),\quad s_{11}=c_1+M(c_1-b_1+1,\varepsilon_1)\quad\text{and}\quad t_{11}=s_{11}+s_{11}^2.
	\end{equation*}
	Suppose that $\{a_j\}_{j=1}^k$, $\{b_j\}_{j=1}^k$, $\{c_j\}_{j=1}^k$, $\{l_{jr}:1\leq j\leq k,1\leq r\leq j\}$, $\{s_{jr}:1\leq j\leq k,1\leq r\leq j\}$ and $\{t_{jr}:1\leq j\leq k,1\leq r\leq j\}$ have been defined.
	Let
	\begin{equation*}
		\begin{split}
			a_{k+1}&=t_{kk}+M(t_{kk}+1,\varepsilon_{k+1}),\\
			b_{k+1}&=a_{k+1}+M(1,\varepsilon_{k+1}).
		\end{split}
	\end{equation*}
	Choose positive integers $l_{k+1,1}<l_{k+1,2}<\cdots<l_{k+1,k+1}$ such that
	\begin{equation*}
		b_{k+1}^2<n_1(l_{k+1,1})<n_2(l_{k+1,2})<\cdots<n_{k+1}(l_{k+1,k+1}).
	\end{equation*}
	Let
	\begin{equation*}
		\begin{split}
			c_{k+1}&=b_{k+1}+n_{k+1}(l_{k+1,k+1}),\\
			s_{k+1,1}&=c_{k+1}+M(c_{k+1}-b_{k+1}+1,\varepsilon_{k+1}),\\
			t_{k+1,1}&=s_{k+1,1}+s_{k+1,1}^2
		\end{split}
	\end{equation*}
	and for $j=1,2,\cdots,k$, we inductively define
	\begin{equation*}
		\begin{split}
			s_{k+1,j+1}&=t_{k+1,j}+M(t_{k+1,j}-s_{k+1,j}+1,\varepsilon_{k+1}),\\
			t_{k+1,j+1}&=s_{k+1,j+1}+s_{k+1,j+1}^2.
		\end{split}
	\end{equation*}
	Inductively, we can define $\{a_j\}_{j\in\mathbb{N}}$, $\{b_j\}_{j\in\mathbb{N}}$, $\{c_j\}_{j\in\mathbb{N}}$,
	$\{l_{jr}:j\in\mathbb{N},1\leq r\leq j\}$, $\{s_{jr}:j\in\mathbb{N},1\leq r\leq j\}$ and $\{t_{jr}:j\in\mathbb{N},1\leq r\leq j\}$.
	By the construction of these sequences and arrays, it is easy to see that for any $j\in\mathbb{N}$, we have
	\begin{equation*}
		a_j<b_j<c_j<s_{j1}<t_{j1}<\cdots<s_{jj}<t_{jj}<a_{j+1}.
	\end{equation*}
	
	Now we define $x_{\xi}\in X$ for each $\xi\in\{0,1\}^{\mathbb{N}}$ as the limit of a Cauchy sequence $\{x_{\xi_1\cdots\xi_k}\}_{k=1}^{\infty}$, constructed inductively as follows.
	By non-uniform specification, there is an $x_{\xi_1}$ such that
	\begin{equation*}
		\begin{split}
			d(f^ix_{\xi_1},z)\leq\varepsilon_1\quad&\text{for}\quad i=a_1,\\
			d(f^ix_{\xi_1},f^{i-b_1}y)\leq\varepsilon_1\quad&\text{for}\quad b_1\leq i\leq c_1,\\
			d(f^ix_{\xi_1},f^ip_{\xi_1})\leq\varepsilon_1\quad&\text{for}\quad s_{11}\leq i\leq t_{11}.
		\end{split}
	\end{equation*}
	
	Assume that $x_{\xi_1\cdots\xi_k}$ has been defined.
	We construct $x_{\xi_1\cdots\xi_{k+1}}$ as follows.
	By non-uniform specification, there is an $x_{\xi_1\cdots\xi_{k+1}}$ such that
	\begin{equation}\label{eq-DC1-thm1-proof-4}
		\begin{split}
			d(f^ix_{\xi_1\cdots\xi_{k+1}},f^ix_{\xi_1\cdots\xi_k})\leq\varepsilon_{k+1}&\quad\text{for}\quad 0\leq i\leq t_{kk},\\
			d(f^ix_{\xi_1\cdots\xi_{k+1}},z)\leq\varepsilon_{k+1}&\quad\text{for}\quad i=a_{k+1},\\
			d(f^ix_{\xi_1\cdots\xi_{k+1}},f^{i-b_{k+1}}y)\leq\varepsilon_{k+1}&\quad\text{for}\quad b_{k+1}\leq i\leq c_{k+1}
		\end{split}
	\end{equation}
	and for $r=1,2,\cdots,k+1$
	\begin{equation}\label{eq-DC1-thm1-proof-5}
		d(f^ix_{\xi_1\cdots\xi_{k+1}},f^ip_{\xi_r})\leq\varepsilon_{k+1}\quad\text{for}\quad s_{k+1,r}\leq i\leq t_{k+1,r}.
	\end{equation}
	Inductively, we construct a sequence $\{x_{\xi_1\cdots\xi_k}\}_{k=1}^{\infty}$.
	One can easily check by \eqref{eq-DC1-thm1-proof-4} that $\{x_{\xi_1\cdots\xi_k}\}_{k=1}^{\infty}$ is a Cauchy sequence in $\overline{B(z,\varepsilon)}$.
	Let $x_{\xi}$ be the limit point of $\{x_{\xi_1\cdots\xi_k}\}_{k=1}^{\infty}$.
	Then by \eqref{eq-DC1-thm1-proof-4}, for any $k\in\mathbb{N}$, we have
	\begin{equation}\label{eq-DC1-thm1-proof-6}
		d(f^ix_{\xi},f^ix_{\xi_1\cdots\xi_k})\leq\varepsilon_k,\quad\forall 0\leq i\leq t_{kk}.
	\end{equation}
	Combining \eqref{eq-DC1-thm1-proof-4}, \eqref{eq-DC1-thm1-proof-5} and \eqref{eq-DC1-thm1-proof-6}, for any $k\in\mathbb{N}$ and $1\leq j\leq k$, we obtain that
	\begin{equation}\label{eq-DC1-thm1-proof-7}
		\begin{split}
			d(f^ix_{\xi},z)\leq\varepsilon_{k-1}&\quad\text{for}\quad i=a_k,\\
			d(f^ix_{\xi},f^{i-b_k}y)\leq\varepsilon_{k-1}&\quad\text{for}\quad b_k\leq i\leq c_k,\\
			d(f^ix_{\xi},f^ip_{\xi_j})\leq\varepsilon_{k-1}&\quad\text{for}\quad s_{kj}\leq i\leq t_{kj}.
		\end{split}
	\end{equation}

	\begin{figure}[h]
		\centering
		\begin{tikzpicture}
			\draw [black, dashed] (0,2)--(1,2) (3,2)--(5,2) (8,2)--(13,2);
			\draw [thick, black] (1,2)--(3,2) (5,2)--(8,2);
			\draw [thick, black] (0,1)--(13,1);
			\fill (0,2) circle (1pt);
			\fill (1,2) circle (1pt);
			\fill (3,2) circle (1pt);
			\fill (5,2) circle (1pt);
			\fill (8,2) circle (1pt);
			\fill (12,2) circle (1pt);
			\fill (0,1) circle (1pt);
			\draw (0,1) node [below] {$f^{a_k}(x_{\xi})$};
			\draw (0,2) node [above] {$a_k$};
			\draw (1,2) node [above] {$b_k$};
			\draw (3,2) node [above] {$c_k$};
			\draw (5,2) node [above] {$s_{kj}$};
			\draw (8,2) node [above] {$t_{kj}$};
			\draw (12,2) node [above] {$a_{k+1}$};
			\draw (0,2) node [below] {$z$};
			\draw (12,2) node [below] {$z$};
			\draw (2,2) node [below] {$\underbrace{\hspace{2cm}}$};
			\draw (6.5,2) node [below] {$\underbrace{\hspace{3cm}}$};
			\draw (2,1.7) node [below] {$\{y,\cdots,f^{c_k-b_k}(y)\}$};
			\draw (6.5,1.7) node [below] {$\{f^{s_{kj}}(p_{\xi_j}),\cdots,f^{t_{kj}}(p_{\xi_j})\}$};
		\end{tikzpicture}
		\caption{Construction of $x_{\xi}$, where $z\in\mathrm{Trans}(f)$, $y\in G_{\mathcal{K}}$ and $\{p_0,p_1\}$ is a distal pair.}
	\end{figure}

	Let $S=\{x_{\xi}:\xi\in\{0,1\}^{\mathbb{N}}\}$.
	We claim that $S\subset G_{\mathcal{K}}\cap U\cap\mathrm{Trans}(f)$.
	Clearly $S\subset\overline{B(z,\varepsilon)}\subset U$.
	We proceed to show that $S\subset\mathrm{Trans}(f)$.
	Since $z\in\mathrm{Trans}(f)$, it suffices to show that $z\in\omega(x_{\xi},f)$ holds for every $\xi\in\{0,1\}^{\mathbb{N}}$.
	Fix any $\xi\in\{0,1\}^{\mathbb{N}}$.
	By \eqref{eq-DC1-thm1-proof-7}, we have $f^{a_k}x_{\xi}\rightarrow z$ as $k\rightarrow\infty$.
	This implies that $z\in\omega(x_{\xi},f)$ and hence $S\subset\mathrm{Trans}(f)$.
	
	Now we show that $S\subset G_{\mathcal{K}}$.
	For any $r\geq 1$ and $j\geq r$, write $n_{jr}=n_r(l_{jr})$.
	By \eqref{eq-DC1-thm1-proof-3}, for any $r\in\mathbb{N}$, we have $\delta_y^{n_{jr}}\rightarrow\nu_r$ weakly as $j\rightarrow\infty$.
	By Lemma~\ref{pre-saturated-lemma1}, Proposition~\ref{pre-saturated-prop5}, and \eqref{eq-DC1-thm1-proof-2}, it suffices to show that $x_{\xi}\in G^{\nu_j}$ for every $\xi\in\{0,1\}^{\mathbb{N}}$ and $j\in\mathbb{N}$.
	Using Lemma~\ref{pre-metric-lemma2} and \eqref{eq-DC1-thm1-proof-7}, we obtain that
	\begin{equation}\label{eq-DC1-thm1-proof-8}
		\rho(\delta_{x_{\xi}}^{b_k+n_{kj}},\delta_y^{n_{kj}})\leq\varepsilon_{k-1}+\frac{b_k}{b_k+n_{kj}}\mathrm{diam}(X),\quad\forall k>j.
	\end{equation}
	Since $n_{kj}=n_j(l_{kj})>n_1(l_{k1})>b_k^2$, by \eqref{eq-DC1-thm1-proof-8}, we obtain that
	\begin{equation*}
		\lim_{k\rightarrow\infty}\delta_{x_{\xi}}^{b_k+n_{kj}}=\lim_{k\rightarrow\infty}\delta_y^{n_{kj}}=\nu_j.
	\end{equation*}
	Therefore $x_{\xi}\in G^{\nu_j}$ and we conclude that $S\subset G_{\mathcal{K}}\cap U\cap\mathrm{Trans}(f)$.
	
	It remains to show that $S$ is an uncountable DC1-scrambled set.
	Fix $\xi,\zeta\in\{0,1\}^{\mathbb{N}}$ with $\xi\neq\zeta$.
	We aim to show that $\{x_{\xi},x_{\zeta}\}$ is a DC1-scrambled pair.
	Assume that $\xi_m\neq\zeta_m$ for some $m\in\mathbb{N}$.
	We need to show that for any $t>0$, we have
	\begin{equation}\label{eq-DC1-thm1-proof-9}
		\limsup_{n\rightarrow\infty}\frac1n|\{i\in[0,n-1]:d(f^ix_\xi,f^ix_\zeta)<t\}|=1
	\end{equation}
	and there is a $t_0>0$ such that
	\begin{equation}\label{eq-DC1-thm1-proof-10}
		\liminf_{n\rightarrow\infty}\frac1n|\{i\in[0,n-1]:d(f^ix_\xi,f^ix_\zeta)<t_0\}|=0.
	\end{equation}
	For any $t>0$, choose $k>0$ sufficiently large such that $\varepsilon_k<t$.
	By \eqref{eq-DC1-thm1-proof-7}, we obtain that
	\begin{equation*}
		\{i\in\mathbb{N}_0:d(f^ix_\xi,f^ix_\zeta)<t\}\supset\bigcup_{j\geq k+2}([b_j,c_j]\cap\mathbb{N}).
	\end{equation*}
	Therefore
	\begin{equation*}
		\limsup_{n\rightarrow\infty}\frac1n|\{i\in[0,n-1]:d(f^ix_\xi,f^ix_\zeta)<t\}|\geq\lim_{j\rightarrow\infty}\frac{c_j-b_j+1}{c_j+1}\geq \lim_{j\rightarrow\infty}\frac{b_j^2+1}{b_j^2+b_j+1}=1,
	\end{equation*}
	which implies \eqref{eq-DC1-thm1-proof-9}.
	By \eqref{eq-DC1-thm1-proof-1}, we obtain that
	\begin{equation*}
		\{i\in\mathbb{N}_0:d(f^ix_\xi,f^ix_\zeta)\geq\eta\}\supset\bigcup_{j\geq m}([s_{jm},t_{jm}]\cap\mathbb{N}).
	\end{equation*}
	Therefore
	\begin{equation*}
		\limsup_{n\rightarrow\infty}\frac1n|\{i\in[0,n-1]:d(f^ix_\xi,f^ix_\zeta)\geq\eta\}|
		\geq\lim_{j\rightarrow\infty}\frac{t_{jm}-s_{jm}+1}{t_{jm}+1}
		=\lim_{j\rightarrow\infty}\frac{s_{jm}^2+1}{s_{jm}^2+s_{jm}+1}=1,
	\end{equation*}
	which implies \eqref{eq-DC1-thm1-proof-10}.
	Combining \eqref{eq-DC1-thm1-proof-9} and \eqref{eq-DC1-thm1-proof-10}, we conclude that $\{x_{\xi},x_{\zeta}\}$ is a DC1-scrambled pair.
	In particular, $x_{\xi}\neq x_{\zeta}$.
	Therefore the cardinality of $S$ is equal to $2^{\aleph_0}>\aleph_0$, which implies that $S$ is an uncountable DC1-scrambled set.
\end{proof}

\begin{proof}[{\bf Proof of Theorem~\ref{thm-DC1-1}}]
	According to Proposition~\ref{pre-saturated-prop5}, we have $G_{\mathcal{K}}\subset G^K$.
	From Theorem~\ref{DC1-thm1}, there exists an uncountable DC1-scrambled set $S^K$ contained in $G^K\cap U\cap\mathrm{Trans}(f)$.
\end{proof}

\begin{proof}[{\bf Proof of Corollary~\ref{cor-DC1}}]
	By Proposition~\ref{pre-saturated-prop5} and Lemma~\ref{pre-saturated-lemma6}, we have $G_{\mathcal{K}}\subset\mathrm{CI}(f)$.
	From Theorem~\ref{DC1-thm1}, it follows that $\mathrm{CI}(f)$ contains an uncountable DC1-scrambled set.
\end{proof}

\subsection{Proof of Theorem~\ref{thm-DC1-2} and Theorem~\ref{thm-DC1-3}}

We prove Theorem~\ref{thm-DC1-3} before the proof of Theorem~\ref{thm-DC1-2}.

\begin{proof}[\bf{Proof of Theorem~\ref{thm-DC1-3}}]
	Let $M(n,\varepsilon)$ be an admissible gap function with $\tau(M)=\tau$.
	Let $\widetilde{K}:=\overline{\mathrm{co}}(K\cup V_f(p_0)\cup V_f(p_1))$.
	If $\tau>0$, then by condition~(2) and Lemma~\ref{pre-controlled-lemma2}, for any $\mu\in\widetilde{K}$ and $\nu\in L$, there exists $\varphi_{\mu,\nu}\in C_{\frac{\tau}{1+\tau}}(X)$ such that
	\begin{equation*}
		\int_X\varphi_{\mu,\nu}\mathrm{d}\nu-\int_X\varphi_{\mu,\nu}\mathrm{d}\mu>\frac{\tau}{1+\tau}\Big(\sup_{x\in X}\varphi_{\mu,\nu}(x)-\inf_{x\in X}\varphi_{\mu,\nu}(x)\Big).
	\end{equation*}
	Clearly, when $\tau=0$, such $\varphi_{\mu,\nu}$ exists as well.
	Without loss of generality, we may assume that
	\begin{equation}\label{eq-thm-DC1-3-proof-1}
		\sup_{x\in X}\varphi_{\mu,\nu}(x)=1\;\text{and}\;\inf_{x\in X}\varphi_{\mu,\nu}(x)=0,\quad\forall\,(\mu,\nu)\in\widetilde{K}\times L.
	\end{equation}
	Then there exist two open neighborhoods $\mathcal{U}_{\mu,\nu}$ and $\mathcal{V}_{\mu,\nu}\subset\mathcal{M}(X)$ of $\mu$ and $\nu$, respectively, such that for any $(\lambda,\omega)\in \mathcal{U}_{\mu,\nu}\times \mathcal{V}_{\mu,\nu}$,
	\begin{equation*}
		\int_X\varphi_{\mu,\nu}\mathrm{d}\omega-\int_X\varphi_{\mu,\nu}\mathrm{d}\lambda>\frac{\tau}{1+\tau}.
	\end{equation*}
	By shrinking $\mathcal{U}_{\mu,\nu}$ and $\mathcal{V}_{\mu,\nu}$, we can choose $\eta_{\mu,\nu}>0$ such that
	\begin{equation*}
		\int_X\varphi_{\mu,\nu}\mathrm{d}\omega-\int_X\varphi_{\mu,\nu}\mathrm{d}\lambda>\frac{\tau}{1+\tau}+\eta_{\mu,\nu}.
	\end{equation*}
	Hence for any $(\lambda,\omega)\in\mathcal{U}_{\mu,\nu}\times\mathcal{V}_{\mu,\nu}$,
	\begin{equation}\label{eq-thm-DC1-3-proof-2}
		0\leq\int_X\varphi_{\mu,\nu}\mathrm{d}\lambda\leq\frac{1}{1+\tau}\quad\text{and}\quad\frac{\tau}{1+\tau}\leq\int_X\varphi_{\mu,\nu}\mathrm{d}\omega\leq1.
	\end{equation}
	Since $\widetilde{K}\times L$ is compact, there exist $\widetilde{\mu}_1,\cdots,\widetilde{\mu}_m\in\widetilde{K}$ and $\widetilde{\nu}_1,\cdots,\widetilde{\nu}_m\in L$ such that
	\begin{itemize}
		\item $\{\mathcal{U}_j\times \mathcal{V}_j:1\leq j\leq m\}$ covers $\widetilde{K}\times L$, where $\mathcal{U}_j:=\mathcal{U}_{\widetilde{\mu}_j,\widetilde{\nu}_j}$ and $\mathcal{V}_j:=\mathcal{V}_{\widetilde{\mu}_j,\widetilde{\nu}_j}$;
		\item for any $(\lambda,\omega)\in \mathcal{U}_j\times \mathcal{V}_j$,
		\begin{equation}\label{eq-thm-DC1-3-proof-3}
			\int_X\varphi_j\mathrm{d}\omega-\int_X\varphi_j\mathrm{d}\lambda>\frac{\tau}{1+\tau}+\eta_j,
		\end{equation}
		where $\varphi_j=\varphi_{\widetilde{\mu}_j,\widetilde{\nu}_j}$ and $\eta_j=\eta_{\widetilde{\mu}_j,\widetilde{\nu}_j}$.
	\end{itemize}

	Since $\widetilde{K}\times L$ is compact and each $\mathcal{U}_j\times\mathcal{V}_j$ is open in $\mathcal{M}(X)\times\mathcal{M}(X)$, we can choose $\kappa>0$ such that for any $(\mu,\nu)\in\widetilde{K}\times L$, there exists $1\leq j\leq m$ satisfying
	\begin{equation*}
		\mathcal{B}(\mu,\kappa)\times\mathcal{B}(\nu,\kappa)\subset\mathcal{U}_j\times\mathcal{V}_j.
	\end{equation*}
	Indeed, for each $(\mu,\nu)\in\widetilde{K}\times L$, choose $1\leq j(\mu,\nu)\leq m$ and $r_{\mu,\nu}>0$ such that
	\begin{equation*}
		\mathcal{B}(\mu,2r_{\mu,\nu})\times\mathcal{B}(\nu,2r_{\mu,\nu})\subset\mathcal{U}_{j(\mu,\nu)}\times\mathcal{V}_{j(\mu,\nu)}.
	\end{equation*}
	Taking a finite subcover of $\widetilde{K}\times L$ by the sets $\mathcal{B}(\mu,r_{\mu,\nu})\times\mathcal{B}(\nu,r_{\mu,\nu})$ and letting $\kappa$ be the minimum of the corresponding radii, we obtain the required property.

	By \eqref{eq-thm-DC1-3-proof-3}, we can take $\eta>0$ sufficiently small such that for any $1\leq j\leq m$ and any $(\lambda,\omega)\in \mathcal{U}_j\times \mathcal{V}_j$,
	\begin{equation}\label{eq-thm-DC1-3-proof-4}
		\int_X\varphi_j\mathrm{d}\omega-\int_X\varphi_j\mathrm{d}\lambda>\frac{\tau}{1+\tau}+3(1+\tau)\eta.
	\end{equation}
	Consider continuous function $\Theta(\vartheta):[0,1/5]\rightarrow\mathbb{R}$ defined as
	\begin{equation*}
		\Theta(\vartheta)=-\frac{(7\tau+4)\vartheta}{(1+\tau-2\vartheta)(1+\tau)}+\frac{3(1+\vartheta)(1+\tau)\eta}{1+\tau-2\vartheta}.
	\end{equation*}
	Then $\Theta(0)=3\eta>0$.
	Hence we can take $\theta>0$ sufficiently small such that $\Theta(\theta)>2\eta$.
	In particular, if $\tau>0$, then we choose $\theta$ sufficiently small such that $\theta<\tau/3$.
	Combining \eqref{eq-thm-DC1-3-proof-2} and \eqref{eq-thm-DC1-3-proof-4}, for any $1\leq r\leq m$ and any $(\widehat{\mu},\nu)\in\mathcal{U}_r\times\mathcal{V}_r$, when $\tau>0$, we obtain that
	\begin{equation*}
		\begin{split}
			&\frac{1+\theta}{1+\tau-2\theta}\int_X\varphi_r\mathrm{d}\widehat{\mu}+\eta+\frac{\tau+4\theta}{1+\tau-2\theta}\\
			\leq&\frac{1+\theta}{1+\tau-2\theta}\bigg(\int_X\varphi_r\mathrm{d}\nu-\frac{\tau}{1+\tau}-3(1+\tau)\eta\bigg)+\eta+\frac{\tau+4\theta}{1+\tau-2\theta}\\
			\leq&\int_X\varphi_r\mathrm{d}\nu-\frac{\tau-3\theta}{1+\tau-2\theta}\int_X\varphi_r\mathrm{d}\nu+\eta-\frac{3(1+\theta)(1+\tau)\eta}{1+\tau-2\theta}-\frac{1}{1+\tau-2\theta}\frac{\tau}{1+\tau}+\frac{\tau+4\theta}{1+\tau-2\theta}\\
			\leq&\int_X\varphi_r\mathrm{d}\nu+\eta-\frac{\tau-3\theta}{1+\tau-2\theta}\frac{\tau}{1+\tau}-\frac{3(1+\theta)(1+\tau)\eta}{1+\tau-2\theta}-\frac{1}{1+\tau-2\theta}\frac{\tau}{1+\tau}+\frac{\tau+4\theta}{1+\tau-2\theta}\\
			=&\int_X\varphi_r\mathrm{d}\nu+\eta+\frac{(7\tau+4)\theta}{(1+\tau-2\theta)(1+\tau)}-\frac{3(1+\theta)(1+\tau)\eta}{1+\tau-2\theta}\\
			=&\int_X\varphi_r\mathrm{d}\nu+\eta-\Theta(\theta)<\int_X\varphi_r\mathrm{d}\nu-\eta.
		\end{split}
	\end{equation*}
	We conclude that
	\begin{equation}\label{eq-thm-DC1-3-proof-5}
		\frac{1+\theta}{1+\tau-2\theta}\int_X\varphi_r\mathrm{d}\widehat{\mu}+\eta+\frac{\tau+4\theta}{1+\tau-2\theta}<\int_X\varphi_r\mathrm{d}\nu-\eta.
	\end{equation}
	When $\tau=0$, it follows directly from \eqref{eq-thm-DC1-3-proof-4} that \eqref{eq-thm-DC1-3-proof-5} holds for $\theta>0$ sufficiently small.

	Choose $\{\mu_k\}_{k=1}^{\infty}\subset K$ such that
	\begin{equation}\label{eq-thm-DC1-3-proof-6}
		\overline{\{\mu_k:k\geq n\}}=K,\quad\forall\, n\in\mathbb{N}.
	\end{equation}
	For any $k\geq 1$, choose $i(k)\in I$ with $\mu_k\in K_{i(k)}$ and fix $y_k\in G_{K_{i(k)}}$.
	For any $k\in\mathbb{N}$, there is an increasing sequence of positive integers $\{n_k(l)\}_{l=1}^{\infty}$ such that
	\begin{equation}\label{eq-thm-DC1-3-proof-7}
		\rho(\delta_{y_k}^{n_k(l)},\mu_k)\leq 2^{-l},\quad\forall\,l\in\mathbb{N}.
	\end{equation}
	Let
	\begin{equation*}
		\gamma=\frac13\inf_{n\geq0}d(f^np_0,f^np_1)>0.
	\end{equation*}
	Take $N\in\mathbb{N}$ sufficiently large such that for any $n\geq N$,
	\begin{equation}\label{eq-thm-DC1-3-proof-8}
		\delta_{p_0}^n\in\mathcal{B}(V_f(p_0),\kappa/3)\subset\mathcal{B}(\widetilde{K},\kappa/3)\quad\text{and}\quad\delta_{p_1}^n\in\mathcal{B}(V_f(p_1),\kappa/3)\subset\mathcal{B}(\widetilde{K},\kappa/3).
	\end{equation}
	Choose an increasing sequence of positive integers $\{N_k\}_{k\geq1}$ such that $N_1>N$ and for any $k\in\mathbb{N}$,
	\begin{equation}\label{eq-thm-DC1-3-proof-9}
		\delta_{y_k}^n\in\mathcal{B}(K_{i(k)},\kappa/3)\subset\mathcal{B}(\widetilde{K},\kappa/3),\quad\forall\, n\geq N_k.
	\end{equation}
	Recall that $\{p_0,p_1\}$ is a distal pair.
	Fix $z_0\in U$ and $\zeta>0$ sufficiently small such that $\overline{B(z_0,3\zeta)}\subset U$.
	Take $\varepsilon_0>0$ sufficiently small such that
	\begin{itemize}
		\item $\varepsilon_0<\min\{1/2,\gamma,\zeta,\kappa/3\}$;
		\item for any $1\leq j\leq m$, $|\varphi_j(x)-\varphi_j(y)|<\eta$ whenever $d(x,y)<\varepsilon_0$;
		\item for any $0<\varepsilon<\varepsilon_0$,
		\begin{equation*}
			\bigg|\liminf_{n\rightarrow\infty}\frac{M(n,\varepsilon)}{n}-\tau\bigg|<\theta.
		\end{equation*}
	\end{itemize}
	For any $k\geq1$, let $\varepsilon_k=\varepsilon_0/2^k$ and fix $z_{k,1},\cdots,z_{k,m_k}$ such that $\{B(z_{kl},\varepsilon_k):1\leq l\leq m_k\}$ covers $X$.

	We proceed to define six arrays of positive integers $\{a_{kr}:k\in\mathbb{N},1\leq r\leq k\}$, $\{b_{kr}:k\in\mathbb{N},1\leq r\leq k\}$, $\{q_{kr}:k\in\mathbb{N},1\leq r\leq m_k\}$, $\{l_{kr}:k\in\mathbb{N},1\leq r\leq k\}$, $\{s_{kr}:k\in\mathbb{N},1\leq r\leq k\}$ and $\{t_{kr}:k\in\mathbb{N},1\leq r\leq k\}$.
	Let $a_0=b_0=0$ and choose $a_{11}>\theta^{-1}N+b_0+M(1,\varepsilon_1)$.
	Take $b_{11}>a_{11}^2+a_{11}+N+\theta^{-1}N_1$ sufficiently large such that
	\begin{equation*}
		\bigg|\frac{M(b_{11}-a_{11}+1,\varepsilon_1)+m_1\cdot M(1,\varepsilon_1)}{b_{11}-a_{11}}-\tau\bigg|<2\theta.
	\end{equation*}
	Then
	\begin{equation*}
		\frac{N}{b_{11}-a_{11}}<\frac{a_{11}}{b_{11}-a_{11}}<\frac{1}{a_{11}}<\theta.
	\end{equation*}
	For each $1\leq r\leq m_1$, let $q_{1,r}=b_{11}+M(b_{11}-a_{11}+1,\varepsilon_1)+(r-1)M(1,\varepsilon_1)$.
	Let $s_{11}=q_{1,m_1}+M(1,\varepsilon_1)$, then $s_{11}>b_{11}>\theta^{-1}N_1$.
	Choose $l_{11}\in\mathbb{N}$ sufficiently large such that $n_1(l_{11})>s_{11}^2$.
	Take $t_{11}\in\mathbb{N}$ sufficiently large such that $t_{11}>s_{11}+n_1(l_{11})>s_{11}^2+s_{11}$ and
	\begin{equation*}
		\bigg|\frac{M(t_{11}-s_{11}+1,\varepsilon_1)}{t_{11}-s_{11}}-\tau\bigg|<2\theta\quad\text{and}\quad\bigg|\frac{M(t_{11}+1,\varepsilon_2)}{t_{11}-s_{11}}-\tau\bigg|<2\theta.
	\end{equation*}
	Then
	\begin{equation*}
		\frac{N_1}{t_{11}-s_{11}}<\frac{s_{11}}{t_{11}-s_{11}}<\frac{1}{s_{11}}<\theta.
	\end{equation*}
	Assume that $\{a_{jr}:1\leq j\leq k,1\leq r\leq j\}$, $\{b_{jr}:1\leq j\leq k,1\leq r\leq j\}$, $\{q_{jr}:1\leq j\leq k,1\leq r\leq m_j\}$, $\{l_{jr}:1\leq j\leq k,1\leq r\leq j\}$, $\{s_{jr}:1\leq j\leq k,1\leq r\leq j\}$ and $\{t_{jr}:1\leq j\leq k,1\leq r\leq j\}$ have been defined.
	Let $a_{k+1,1}=t_{kk}+M(t_{kk}+1,\varepsilon_{k+1})$.
	For $1\leq i\leq k$, we inductively define $b_{k+1,i}$ and $a_{k+1,i+1}$ as follows.
	If $a_{k+1,i}$ has been taken, then we take $b_{k+1,i}>a_{k+1,i}^2+a_{k+1,i}+N$ sufficiently large such that
	\begin{equation*}
		\bigg|\frac{M(b_{k+1,i}-a_{k+1,i}+1,\varepsilon_{k+1})}{b_{k+1,i}-a_{k+1,i}}-\tau\bigg|<2\theta.
	\end{equation*}
	Let $a_{k+1,i+1}=b_{k+1,i}+M(b_{k+1,i}-a_{k+1,i}+1,\varepsilon_{k+1})$.
	For $i=k+1$, we take $b_{k+1,k+1}>a_{k+1,k+1}^2+a_{k+1,k+1}+N+\theta^{-1}N_{k+1}$ sufficiently large such that
	\begin{equation*}
		\bigg|\frac{M(b_{k+1,k+1}-a_{k+1,k+1}+1,\varepsilon_{k+1})+m_{k+1}\cdot M(1,\varepsilon_{k+1})}{b_{k+1,k+1}-a_{k+1,k+1}}-\tau\bigg|<2\theta.
	\end{equation*}
	Then for any $1\leq j\leq k+1$,
	\begin{equation*}
		\frac{N}{b_{k+1,j}-a_{k+1,j}}<\frac{a_{k+1,j}}{b_{k+1,j}-a_{k+1,j}}<\frac{1}{a_{k+1,j}}<\theta.
	\end{equation*}
	For each $1\leq r\leq m_{k+1}$, let $q_{k+1,r}=b_{k+1,k+1}+M(b_{k+1,k+1}-a_{k+1,k+1}+1,\varepsilon_{k+1})+(r-1)M(1,\varepsilon_{k+1})$.
	Let $s_{k+1,1}=q_{k+1,m_{k+1}}+M(1,\varepsilon_{k+1})$, then $s_{k+1,1}>b_{k+1,k+1}>\theta^{-1}N_{k+1}\geq\theta^{-1}N_j$ for all $1\leq j\leq k+1$.
	Choose $l_{k+1,1}\in\mathbb{N}$ sufficiently large such that $n_1(l_{k+1,1})>s_{k+1,1}^2$.
	Take $t_{k+1,1}\in\mathbb{N}$ sufficiently large such that $t_{k+1,1}>s_{k+1,1}+n_1(l_{k+1,1})>s_{k+1,1}^2+s_{k+1,1}$ and
	\begin{equation*}
		\bigg|\frac{M(t_{k+1,1}-s_{k+1,1}+1,\varepsilon_{k+1})}{t_{k+1,1}-s_{k+1,1}}-\tau\bigg|<2\theta.
	\end{equation*}
	Then
	\begin{equation*}
		\frac{N_1}{t_{k+1,1}-s_{k+1,1}}<\frac{s_{k+1,1}}{t_{k+1,1}-s_{k+1,1}}<\frac{1}{s_{k+1,1}}<\theta.
	\end{equation*}
	Similarly we can take positive integers $l_{k+1,j},s_{k+1,j},t_{k+1,j}$ for $2\leq j\leq k+1$ inductively such that
	\begin{itemize}
		\item $s_{k+1,j}=t_{k+1,j-1}+M(t_{k+1,j-1}-s_{k+1,j-1}+1,\varepsilon_{k+1})>\theta^{-1}N_j$,
		\item $n_j(l_{k+1,j})>s_{k+1,j}^2$,
		\item $t_{k+1,j}>s_{k+1,j}+n_j(l_{k+1,j})>s_{k+1,j}^2+s_{k+1,j}$,
	\end{itemize}
	and
	\begin{equation}\label{eq-thm-DC1-3-proof-10}
		\bigg|\frac{M(t_{k+1,j}-s_{k+1,j}+1,\varepsilon_{k+1})}{t_{k+1,j}-s_{k+1,j}}-\tau\bigg|<2\theta.
	\end{equation}
	Then
	\begin{equation*}
		\frac{N_j}{t_{k+1,j}-s_{k+1,j}}<\frac{s_{k+1,j}}{t_{k+1,j}-s_{k+1,j}}<\frac{1}{s_{k+1,j}}<\theta.
	\end{equation*}
	Moreover, $t_{k+1,k+1}$ can be taken such that
	\begin{equation}\label{eq-thm-DC1-3-proof-11}
		\bigg|\frac{M(t_{k+1,k+1}+1,\varepsilon_{k+2})}{t_{k+1,k+1}-s_{k+1,k+1}}-\tau\bigg|<2\theta.
	\end{equation}
	Indeed, we can take an increasing sequence $\{\tilde{t}_l\}_{l=1}^{\infty}$ such that
	\begin{equation*}
		\lim_{l\to\infty}\frac{M(\tilde{t}_l,\varepsilon_{k+2})}{\tilde{t}_l}=\liminf_{n\to\infty}\frac{M(n,\varepsilon_{k+2})}{n}.
	\end{equation*}
	Since $s_{k+1,k+1}$ is fixed and
	\begin{equation*}
		M(\tilde{t}_l-s_{k+1,k+1},\varepsilon_{k+1})
		\leq M(\tilde{t}_l,\varepsilon_{k+2}),
	\end{equation*}
	we can take $l$ sufficiently large and let $t_{k+1,k+1}=\tilde{t}_l-1$ such that \eqref{eq-thm-DC1-3-proof-10} and \eqref{eq-thm-DC1-3-proof-11} hold.
	Inductively, we can define $\{a_{kr}:k\in\mathbb{N},1\leq r\leq k\}$, $\{b_{kr}:k\in\mathbb{N},1\leq r\leq k\}$, $\{q_{kr}:k\in\mathbb{N},1\leq r\leq m_k\}$, $\{l_{kr}:k\in\mathbb{N},1\leq r\leq k\}$, $\{s_{kr}:k\in\mathbb{N},1\leq r\leq k\}$ and $\{t_{kr}:k\in\mathbb{N},1\leq r\leq k\}$.
	For convenience, let
	\begin{equation*}
		a_{k,k+1}=s_{k1}\quad\text{and}\quad s_{k,k+1}=a_{k+1,1},\quad\forall\,k\in\mathbb{N}.
	\end{equation*}
	By the construction of these arrays, for any $k\in\mathbb{N}$, it is clear that
	\begin{equation}\label{eq-thm-DC1-3-proof-12}
		\begin{split}
			b_{kj}>a_{kj}^2+a_{kj},\;\text{and}\; t_{kj}>s_{kj}+n_j(l_{kj})>s_{kj}^2+s_{kj},&\quad\forall 1\leq j\leq k;\\
			\frac{N}{b_{kj}-a_{kj}}<\frac{a_{kj}}{b_{kj}-a_{kj}}<\frac{1}{a_{kj}}<\theta,&\quad\forall 1\leq j\leq k;\\
			\frac{N_j}{t_{kj}-s_{kj}}<\frac{s_{kj}}{t_{kj}-s_{kj}}<\frac{1}{s_{kj}}<\theta,&\quad\forall 1\leq j\leq k;\\
			\bigg|\frac{a_{k,j+1}-b_{kj}}{b_{kj}-a_{kj}}-\tau\bigg|<2\theta,&\quad\forall 1\leq j\leq k;\\
			\bigg|\frac{s_{k,j+1}-t_{kj}}{t_{kj}-s_{kj}}-\tau\bigg|<2\theta,&\quad\forall 1\leq j\leq k.
		\end{split}
	\end{equation}

	Now we define $x_{\xi}\in X$ for each $\xi\in\{0,1\}^{\mathbb{N}}$ as the limit of a Cauchy sequence $\{x_{\xi_1\cdots\xi_k}\}_{k=1}^{\infty}$, constructed inductively as follows.
	By non-uniform specification, there is an $x_{\xi_1}$ such that
	\begin{equation*}
		\begin{split}
			d(f^ix_{\xi_1},z_0)\leq\varepsilon_1\quad&\text{for}\quad i=a_0,\\
			d(f^ix_{\xi_1},f^{i-a_{11}}p_{\xi_1})\leq\varepsilon_1\quad&\text{for}\quad a_{11}\leq i\leq b_{11},\\
			d(f^ix_{\xi_1},z_{1,r})\leq\varepsilon_1\quad&\text{for}\quad i=q_{1r},1\leq r\leq m_1,\\
			d(f^ix_{\xi_1},f^{i-s_{11}}y_1)\leq\varepsilon_1\quad&\text{for}\quad s_{11}\leq i\leq t_{11}.
		\end{split}
	\end{equation*}
	Assume that $x_{\xi_1\cdots\xi_k}$ has been defined.
	By non-uniform specification, there exists $x_{\xi_1\cdots\xi_{k+1}}\in X$ such that
	\begin{equation}\label{eq-thm-DC1-3-proof-13}
		\begin{split}
			d(f^ix_{\xi_1\cdots\xi_k\xi_{k+1}},f^ix_{\xi_1\cdots\xi_k})\leq\varepsilon_{k+1}\quad&\text{for}\quad 0\leq i\leq t_{kk},\\
			d(f^ix_{\xi_1\cdots\xi_k\xi_{k+1}},f^{i-a_{k+1,r}}p_{\xi_r})\leq\varepsilon_{k+1}\quad&\text{for}\quad a_{k+1,r}\leq i\leq b_{k+1,r},1\leq r\leq k+1,\\
			d(f^ix_{\xi_1\cdots\xi_k\xi_{k+1}},z_{k+1,r})\leq\varepsilon_{k+1}\quad&\text{for}\quad i=q_{k+1,r},1\leq r\leq m_{k+1},\\
			d(f^ix_{\xi_1\cdots\xi_k\xi_{k+1}},f^{i-s_{k+1,r}}y_r)\leq\varepsilon_{k+1}\quad&\text{for}\quad s_{k+1,r}\leq i\leq t_{k+1,r},1\leq r\leq k+1.
		\end{split}
	\end{equation}
	Inductively, we construct a sequence $\{x_{\xi_1\cdots\xi_k}\}_{k=1}^{\infty}$, which can be easily checked to be a Cauchy sequence in $\overline{B(z_0,\varepsilon_0)}$ by \eqref{eq-thm-DC1-3-proof-13}.
	Let $x_{\xi}$ be the limit of $\{x_{\xi_1\cdots\xi_k}\}_{k=1}^{\infty}$.
	Then by \eqref{eq-thm-DC1-3-proof-13}, for any $k\in\mathbb{N}$, we have
	\begin{equation}\label{eq-thm-DC1-3-proof-14}
		d(f^ix_{\xi},f^ix_{\xi_1\cdots\xi_k})\leq\varepsilon_k,\quad\forall\,0\leq i\leq t_{kk}.
	\end{equation}
	Combining \eqref{eq-thm-DC1-3-proof-13} and \eqref{eq-thm-DC1-3-proof-14}, for any $k\in\mathbb{N}$ and $1\leq j\leq k$, we obtain that
	\begin{equation}\label{eq-thm-DC1-3-proof-15}
		\begin{split}
			d(f^ix_{\xi},f^{i-a_{kj}}p_{\xi_j})\leq\varepsilon_{k-1}\quad&\text{for}\quad a_{kj}\leq i\leq b_{kj},\\
			d(f^ix_{\xi},z_{kr})\leq\varepsilon_{k-1}\quad&\text{for}\quad i=q_{kr},1\leq r\leq m_k,\\
			d(f^ix_{\xi},f^{i-s_{kj}}y_j)\leq\varepsilon_{k-1}\quad&\text{for}\quad s_{kj}\leq i\leq t_{kj}.
		\end{split}
	\end{equation}

	\begin{figure}[h]
		\centering
		\begin{tikzpicture}
			\draw [black, dashed] (0,2)--(1,2) (3,2)--(6,2) (9,2)--(12,2);
			\draw [thick, black] (1,2)--(3,2) (6,2)--(9,2) (12,2)--(13,2);
			\draw [thick, black] (0,1)--(13,1);
			\fill (0,2) circle (1pt);
			\fill (1,2) circle (1pt);
			\fill (3,2) circle (1pt);
			\fill (5,2) circle (1pt);
			\fill (6,2) circle (1pt);
			\fill (9,2) circle (1pt);
			\fill (12,2) circle (1pt);
			\fill (0,1) circle (1pt);
			\fill (1,1) circle (1pt);
			\draw (0,1) node [below] {$x_{\xi}$};
			\draw (1,1) node [below] {$f^{a_{ki}}(x_{\xi})$};
			\draw (0,2) node [above] {$0$};
			\draw (1,2) node [above] {$a_{ki}$};
			\draw (3,2) node [above] {$b_{ki}$};
			\draw (5,2) node [above] {$q_{kr}$};
			\draw (6,2) node [above] {$s_{kj}$};
			\draw (9,2) node [above] {$t_{kj}$};
			\draw (12,2) node [above] {$a_{k+1,1}$};
			\draw (0,2) node [below] {$z_0$};
			\draw (5,2) node [below] {$z_{kr}$};
			\draw (12,2) node [below] {$p_{\xi_1}$};
			\draw (2,2) node [below] {$\underbrace{\hspace{2cm}}$};
			\draw (7.5,2) node [below] {$\underbrace{\hspace{3cm}}$};
			\draw (2,1.7) node [below] {$\{p_{\xi_i},\cdots,f^{b_{ki}-a_{ki}}p_{\xi_i}\}$};
			\draw (7.5,1.7) node [below] {$\{y_j,\cdots,f^{t_{kj}-s_{kj}}(y_j)\}$};
		\end{tikzpicture}
		\caption{Construction of $x_{\xi}$.}
	\end{figure}

	Let $S=\{x_{\xi}:\xi\in\{0,1\}^{\mathbb{N}}\}$.
	We claim that $S$ is an uncountable DC1-scrambled set contained in
	\begin{equation*}
		G^K\cap U\cap\mathrm{Trans}(f)\setminus\bigcup_{\nu\in L} G^{\nu}.
	\end{equation*}
	Similar to the proof of Theorem~\ref{DC1-thm1}, one can show that $S$ is an uncountable DC1-scrambled set.
	We proceed to show that $S\subset G^K\cap U\cap\mathrm{Trans}(f)\setminus\bigcup_{\nu\in L} G^{\nu}$.
	Since $\overline{B(z_0,\varepsilon_0)}\subset\overline{B(z_0,3\zeta)}\subset U$ and $\{B(z_{kl},\varepsilon_k):1\leq l\leq m_k\}$ covers $X$ for all $k$, by \eqref{eq-thm-DC1-3-proof-15}, we obtain that $S\subset U\cap\mathrm{Trans}(f)$.
	It remains to show that $S\subset G^K\setminus\bigcup_{\nu\in L}G^{\nu}$.

	For any $x_{\xi}\in S$ and any $\mu\in K$, we show that $\mu\in V_f(x_{\xi})$.
	By \eqref{eq-thm-DC1-3-proof-6}, we can choose an increasing sequence $\{k(r)\}_{r=1}^{\infty}$ of positive integers such that
	\begin{equation}\label{eq-thm-DC1-3-proof-16}
		\rho(\mu_{k(r)},\mu)<2^{-r},\quad\forall r\geq1.
	\end{equation}
	Then by \eqref{eq-thm-DC1-3-proof-15}, for any $r\in\mathbb{N}$, any $j\geq k(r)$, we have
	\begin{equation*}
		d(f^ix_{\xi},f^{i-s_{j,k(r)}}y_{k(r)})\leq\varepsilon_{j-1},\quad s_{j,k(r)}\leq i\leq t_{j,k(r)}.
	\end{equation*}
	In particular, since $t_{j,k(r)}>s_{j,k(r)}+n_{k(r)}(l_{j,k(r)})$, we obtain that
	\begin{equation*}
		d(f^{s_{j,k(r)}+i}x_{\xi},f^iy_{k(r)})\leq\varepsilon_{j-1},\quad 0\leq i\leq n_{k(r)}(l_{j,k(r)})-1.
	\end{equation*}
	Then by Proposition~\ref{pre-metric-prop1} and Lemma~\ref{pre-metric-lemma2}, we obtain that
	\begin{equation}\label{eq-thm-DC1-3-proof-17}
		\begin{split}
			\rho(\delta_{x_{\xi}}^{s_{j,k(r)}+n_{k(r)}(l_{j,k(r)})},\delta_{y_{k(r)}}^{n_{k(r)}(l_{j,k(r)})})&\leq\varepsilon_{j-1}+\frac{s_{j,k(r)}}{s_{j,k(r)}+n_{k(r)}(l_{j,k(r)})}\mathrm{diam}(X)\\
			&<\varepsilon_{j-1}+\frac{\mathrm{diam}(X)}{s_{j,k(r)}}.
		\end{split}
	\end{equation}
	Combining \eqref{eq-thm-DC1-3-proof-7} and \eqref{eq-thm-DC1-3-proof-17}, we obtain that
	\begin{equation}\label{eq-thm-DC1-3-proof-18}
		\begin{split}
			\rho(\delta_{x_{\xi}}^{s_{j,k(r)}+n_{k(r)}(l_{j,k(r)})},\mu_{k(r)})&\leq\rho(\delta_{x_{\xi}}^{s_{j,k(r)}+n_{k(r)}(l_{j,k(r)})},\delta_{y_{k(r)}}^{n_{k(r)}(l_{j,k(r)})})+\rho(\delta_{y_{k(r)}}^{n_{k(r)}(l_{j,k(r)})},\mu_{k(r)})\\
			&<2^{-l_{j,k(r)}}+\varepsilon_{j-1}+\frac{\mathrm{diam}(X)}{s_{j,k(r)}}.
		\end{split}
	\end{equation}
	We can choose an increasing sequence $\{j(r)\}_{r=1}^{\infty}$ such that $s_{j(r),k(r)}>2^r\mathrm{diam}(X)$ and $l_{j(r),k(r)}>r$, then $l_{j(r),k(r)}\to\infty$ as $r\to\infty$.
	Let
	\begin{equation*}
		u(r)=s_{j(r),k(r)}+n_{k(r)}(l_{j(r),k(r)}),
	\end{equation*}
	then $\{u(r)\}_{r=1}^{\infty}$ is increasing.
	Combining \eqref{eq-thm-DC1-3-proof-16} and \eqref{eq-thm-DC1-3-proof-18}, we obtain that
	\begin{equation*}
		\rho(\delta_{x_{\xi}}^{u(r)},\mu)\leq\rho(\delta_{x_{\xi}}^{s_{j,k(r)}+n_{k(r)}(l_{j,k(r)})},\mu_{k(r)})+\rho(\mu_{k(r)},\mu)<2^{-r+1}+2^{-l_{j(r),k(r)}}+\varepsilon_{j(r)-1}.
	\end{equation*}
	Hence
	\begin{equation*}
		\lim_{r\to\infty}\rho(\delta_{x_{\xi}}^{u(r)},\mu)=0.
	\end{equation*}
	This implies that $\mu\in V_f(x_{\xi})$.
	By the arbitrariness of $\mu\in K$ and $x_{\xi}\in S$, we conclude that $S\subset G^K$.

	Now we show that $S\cap G^{\nu}=\varnothing$ for all $\nu\in L$.
	For any $x_{\xi}\in S$, we aim to show that
	\begin{equation}\label{eq-thm-DC1-3-proof-19}
		\liminf_{n\to\infty}\max_{1\leq j\leq m}\bigg|\int_X\varphi_j\mathrm{d}\delta_{x_\xi}^n-\int_X\varphi_j\mathrm{d}\nu\bigg|\geq\eta>0.
	\end{equation}
	Fix any integer $n\geq a_{2,1}$.
	We prove that there exists $1\leq r\leq m$ such that
	\begin{equation}\label{eq-thm-DC1-3-proof-20}
		\int_X\varphi_r\mathrm{d}\delta_{x_\xi}^n<\int_X\varphi_r\mathrm{d}\nu-\eta
	\end{equation}
	for the following seven cases illustrated in Figure~\ref{fig-thm-DC1-3-proof-cases}.
	\begin{figure}[h]
		\centering
		\begin{tikzpicture}
			\draw (-2,3) node {Case 1/Case 4};
			\draw [black, dashed] (0,3)--(1,3) (4,3)--(5,3) (10,3)--(11,3);
			\draw [thick, black] (1,3)--(4,3) (5,3)--(10,3);
			\fill (0,3) circle (1pt);
			\fill (1,3) circle (1pt);
			\fill (4,3) circle (1pt);
			\fill (5,3) circle (1pt);
			\fill (6,3) circle (2pt);
			\fill (8,3) circle (1pt);
			\fill (10,3) circle (1pt);
			\draw (0,3) node [below] {$0$};
			\draw (1.3,3) node [above] {\small $a_{k,j-1}/s_{k,j-1}$};
			\draw (3.5,3) node [below] {\small $b_{k,j-1}/t_{k,j-1}$};
			\draw (5,3) node [above] {\small $a_{kj}/s_{kj}$};
			\draw (6,3) node [below] {$n$};
			\draw (8,3) node [above] {\small $a_{kj}+N/s_{kj}+N_j$};
			\draw (10,3) node [below] {\small $b_{kj}/t_{kj}$};
			\draw (-2,2) node {Case 2/Case 5};
			\draw [black, dashed] (0,2)--(1,2) (4,2)--(5,2) (10,2)--(11,2);
			\draw [thick, black] (1,2)--(4,2) (5,2)--(10,2);
			\fill (0,2) circle (1pt);
			\fill (1,2) circle (1pt);
			\fill (4,2) circle (1pt);
			\fill (5,2) circle (1pt);
			\fill (7,2) circle (1pt);
			\fill (8.5,2) circle (2pt);
			\fill (10,2) circle (1pt);
			\draw (0,2) node [below] {$0$};
			\draw (1.3,2) node [above] {\small $a_{k,j-1}/s_{k,j-1}$};
			\draw (3.5,2) node [below] {\small $b_{k,j-1}/t_{k,j-1}$};
			\draw (5,2) node [above] {\small $a_{kj}/s_{kj}$};
			\draw (7,2) node [below] {\small $a_{kj}+N/s_{kj}+N_j$};
			\draw (8.5,2) node [above] {$n$};
			\draw (10,2) node [below] {\small $b_{kj}/t_{kj}$};
			\draw (-2,1) node {Case 3/Case 6};
			\draw [black, dashed] (0,1)--(1,1) (4,1)--(7,1);
			\draw [thick, black] (1,1)--(4,1) (7,1)--(11,1);
			\fill (0,1) circle (1pt);
			\fill (1,1) circle (1pt);
			\fill (4,1) circle (1pt);
			\fill (5,1) circle (2pt);
			\fill (7,1) circle (1pt);
			\draw (0,1) node [below] {$0$};
			\draw (1,1) node [above] {\small $a_{kj}/s_{kj}$};
			\draw (4,1) node [below] {\small $b_{kj}/t_{kj}$};
			\draw (5,1) node [above] {$n$};
			\draw (7,1) node [below] {\small $a_{k,j+1}/s_{k,j+1}$};
			\draw (-2,0) node {Case 7};
			\draw [black, dashed] (0,0)--(1,0) (3,0)--(5,0) (10,0)--(11,0);
			\draw [thick, black] (1,0)--(3,0) (5,0)--(10,0);
			\fill (0,0) circle (1pt);
			\fill (1,0) circle (1pt);
			\fill (3,0) circle (1pt);
			\fill (5,0) circle (1pt);
			\fill (7,0) circle (1pt);
			\fill (9,0) circle (2pt);
			\fill (10,0) circle (1pt);
			\draw (0,0) node [below] {$0$};
			\draw (1,0) node [above] {\small $a_{kk}/s_{kk}$};
			\draw (3,0) node [below] {\small $b_{kk}/t_{kk}$};
			\draw (5,0) node [above] {\small $s_{k1}/a_{k+1,1}$};
			\draw (7,0) node [below] {\small $s_{k1}+N_1/a_{k+1,1}+N$};
			\draw (9,0) node [above] {$n$};
			\draw (10,0) node [below] {\small $t_{k1}/b_{k+1,1}$};
		\end{tikzpicture}
		\caption{The seven cases for the location of $n$.}
		\label{fig-thm-DC1-3-proof-cases}
	\end{figure}

	{\bf Case 1.}
	Assume that there exists $j$ with $2\leq j\leq k+1$ such that $a_{kj}\leq n\leq a_{kj}+N$, where $N$ is replaced by $N_1$ if $j=k+1$ (recall that $a_{k,k+1}=s_{k1}$).
	Let $\widehat{\mu}=\delta_{p_{\xi_{j-1}}}^{b_{k,j-1}-a_{k,j-1}+1}$.
	Then by \eqref{eq-thm-DC1-3-proof-8}, we have $\widehat{\mu}\in\mathcal{B}(\widetilde{K},\kappa/3)$.
	By definition of $\kappa$, we can choose $1\leq r\leq m$ such that $(\widehat{\mu},\nu)\in\mathcal{U}_r\times\mathcal{V}_r$.
	Combining \eqref{eq-thm-DC1-3-proof-1}, \eqref{eq-thm-DC1-3-proof-5}, \eqref{eq-thm-DC1-3-proof-12} and \eqref{eq-thm-DC1-3-proof-15}, we obtain that
	\begin{equation*}
		\begin{split}
			\int_X\varphi_r\mathrm{d}\delta_{x_{\xi}}^n&\leq\frac{1}{n}\sum_{i=a_{k,j-1}}^{b_{k,j-1}}\varphi_r(f^ix_{\xi})+\frac{n-b_{k,j-1}+a_{k,j-1}-1}{n}\\
			&\leq\frac{1}{n}\sum_{i=0}^{b_{k,j-1}-a_{k,j-1}}\varphi_r(f^ip_{\xi_{j-1}})+\eta+\frac{n-b_{k,j-1}+a_{k,j-1}-1}{n}\\
			&=\frac{b_{k,j-1}-a_{k,j-1}+1}{n}\int_X\varphi_r\mathrm{d}\widehat{\mu}+\eta+\frac{n-b_{k,j-1}+a_{k,j-1}-1}{n}\\
			&<\frac{1+\theta}{1+\tau-2\theta}\int_X\varphi_r\mathrm{d}\widehat{\mu}+\eta+\frac{\tau+4\theta}{1+\tau-2\theta}<\int_X\varphi_r\mathrm{d}\nu-\eta.
		\end{split}
	\end{equation*}

	{\bf Case 2.}
	Assume that there exists $j$ with $2\leq j\leq k$ such that $a_{kj}+N+1\leq n\leq b_{kj}$.
	Let
	\begin{equation*}
		\widehat{\mu}=\frac{b_{k,j-1}-a_{k,j-1}+1}{n-a_{kj}+b_{k,j-1}-a_{k,j-1}+1}\delta_{p_{\xi_{j-1}}}^{b_{k,j-1}-a_{k,j-1}+1}+\frac{n-a_{kj}}{n-a_{kj}+b_{k,j-1}-a_{k,j-1}+1}\delta_{p_{\xi_j}}^{n-a_{kj}}.
	\end{equation*}
	Then by \eqref{eq-thm-DC1-3-proof-8}, we have $\widehat{\mu}\in\mathcal{B}(\widetilde{K},2\kappa/3)$.
	By  definition of $\kappa$, we can choose $1\leq r\leq m$ such that $(\widehat{\mu},\nu)\in\mathcal{U}_r\times\mathcal{V}_r$.
	Let $\widehat{\mathbb{I}}=\mathbb{N}\cap([a_{k,j-1},b_{k,j-1}]\cup[a_{kj},n-1])$.
	Note that
	\begin{equation*}
		\frac{n-a_{kj}+b_{k,j-1}-a_{k,j-1}+1}{n}\geq\frac{b_{k,j-1}-a_{k,j-1}+1}{a_{kj}}\quad\text{and}\quad 0\leq\int_X\varphi_r\mathrm{d}\widehat{\mu}\leq1.
	\end{equation*}
	Combining \eqref{eq-thm-DC1-3-proof-1}, \eqref{eq-thm-DC1-3-proof-5}, \eqref{eq-thm-DC1-3-proof-12} and \eqref{eq-thm-DC1-3-proof-15}, we obtain that
	\begin{equation*}
		\begin{split}
			\int_X\varphi_r\mathrm{d}\delta_{x_{\xi}}^n&\leq\frac{1}{n}\sum_{i\in\widehat{\mathbb{I}}}\varphi_r(f^ix_{\xi})+\frac{a_{kj}-b_{k,j-1}+a_{k,j-1}-1}{n}\\
			&\leq\frac{n-a_{kj}+b_{k,j-1}-a_{k,j-1}+1}{n}\int_X\varphi_r\mathrm{d}\widehat{\mu}+\eta+\frac{a_{kj}-b_{k,j-1}+a_{k,j-1}-1}{n}\\
			&\leq\frac{b_{k,j-1}-a_{k,j-1}+1}{a_{kj}}\int_X\varphi_r\mathrm{d}\widehat{\mu}+\eta+\frac{a_{kj}-b_{k,j-1}+a_{k,j-1}-1}{a_{kj}}\\
			&\leq\frac{1+\theta}{1+\tau-2\theta}\int_X\varphi_r\mathrm{d}\widehat{\mu}+\eta+\frac{\tau+4\theta}{1+\tau-2\theta}<\int_X\varphi_r\mathrm{d}\nu-\eta.
		\end{split}
	\end{equation*}

	{\bf Case 3.}
	Assume that there exists $j$ with $1\leq j\leq k$ such that $b_{kj}+1\leq n\leq a_{k,j+1}-1$ (recall that $a_{k,k+1}=s_{k1}$).
	Let $\widehat{\mu}=\delta_{p_{\xi_j}}^{b_{kj}-a_{kj}+1}$.
	By \eqref{eq-thm-DC1-3-proof-8}, we have $\widehat{\mu}\in\mathcal{B}(\widetilde{K},\kappa/3)$.
	By definition of $\kappa$, we can choose $1\leq r\leq m$ such that $(\widehat{\mu},\nu)\in\mathcal{U}_r\times\mathcal{V}_r$.
	Combining \eqref{eq-thm-DC1-3-proof-1}, \eqref{eq-thm-DC1-3-proof-5}, \eqref{eq-thm-DC1-3-proof-12} and \eqref{eq-thm-DC1-3-proof-15}, we obtain that
	\begin{equation*}
		\begin{split}
			\int_X\varphi_r\mathrm{d}\delta_{x_{\xi}}^n&\leq\frac{1}{n}\sum_{i=a_{kj}}^{b_{kj}}\varphi_r(f^ix_{\xi})+\frac{n-b_{kj}+a_{kj}-1}{n}\\
			&\leq\frac{1}{n}\sum_{i=0}^{b_{kj}-a_{kj}}\varphi_r(f^ip_{\xi_j})+\eta+\frac{n-b_{kj}+a_{kj}-1}{n}\\
			&=\frac{b_{kj}-a_{kj}+1}{n}\int_X\varphi_r\mathrm{d}\widehat{\mu}+\eta+\frac{n-b_{kj}+a_{kj}-1}{n}\\
			&\leq\frac{1+\theta}{1+\tau-2\theta}\int_X\varphi_r\mathrm{d}\widehat{\mu}+\eta+\frac{\tau+4\theta}{1+\tau-2\theta}<\int_X\varphi_r\mathrm{d}\nu-\eta.
		\end{split}
	\end{equation*}

	{\bf Case 4.}
	Assume that there exists $j$ with $2\leq j\leq k+1$ such that $s_{kj}\leq n\leq s_{kj}+N_j$, where $N_j$ is replaced by $N$ if $j=k+1$ (recall that $s_{k,k+1}=a_{k+1,1}$).
	The proof for this case is similar to Case~1.
	Let $\widehat{\mu}=\delta_{y_{j-1}}^{t_{k,j-1}-s_{k,j-1}+1}$.
	Then by \eqref{eq-thm-DC1-3-proof-9}, we have $\widehat{\mu}\in\mathcal{B}(\widetilde{K},\kappa/3)$.
	By definition of $\kappa$, we can choose $1\leq r\leq m$ such that $(\widehat{\mu},\nu)\in\mathcal{U}_r\times\mathcal{V}_r$.
	Combining \eqref{eq-thm-DC1-3-proof-1}, \eqref{eq-thm-DC1-3-proof-5}, \eqref{eq-thm-DC1-3-proof-12} and \eqref{eq-thm-DC1-3-proof-15}, we obtain that
	\begin{equation*}
		\begin{split}
			\int_X\varphi_r\mathrm{d}\delta_{x_{\xi}}^n&\leq\frac{1}{n}\sum_{i=s_{k,j-1}}^{t_{k,j-1}}\varphi_r(f^ix_{\xi})+\frac{n-t_{k,j-1}+s_{k,j-1}-1}{n}\\
			&\leq\frac{1}{n}\sum_{i=0}^{t_{k,j-1}-s_{k,j-1}}\varphi_r(f^iy_{j-1})+\eta+\frac{n-t_{k,j-1}+s_{k,j-1}-1}{n}\\
			&=\frac{t_{k,j-1}-s_{k,j-1}+1}{n}\int_X\varphi_r\mathrm{d}\widehat{\mu}+\eta+\frac{n-t_{k,j-1}+s_{k,j-1}-1}{n}\\
			&<\frac{1+\theta}{1+\tau-2\theta}\int_X\varphi_r\mathrm{d}\widehat{\mu}+\eta+\frac{\tau+4\theta}{1+\tau-2\theta}<\int_X\varphi_r\mathrm{d}\nu-\eta.
		\end{split}
	\end{equation*}

	{\bf Case 5.}
	Assume that there exists $j$ with $2\leq j\leq k$ such that $s_{kj}+N_j+1\leq n\leq t_{kj}$.
	The proof for this case is similar to Case~2.
	Let
	\begin{equation*}
		\widehat{\mu}=\frac{t_{k,j-1}-s_{k,j-1}+1}{n-s_{kj}+t_{k,j-1}-s_{k,j-1}+1}\delta_{y_{j-1}}^{t_{k,j-1}-s_{k,j-1}+1}+\frac{n-s_{kj}}{n-s_{kj}+t_{k,j-1}-s_{k,j-1}+1}\delta_{y_j}^{n-s_{kj}}.
	\end{equation*}
	Then by \eqref{eq-thm-DC1-3-proof-9}, we have $\widehat{\mu}\in\mathcal{B}(\widetilde{K},2\kappa/3)$.
	By definition of $\kappa$, we can choose $1\leq r\leq m$ such that $(\widehat{\mu},\nu)\in\mathcal{U}_r\times\mathcal{V}_r$.
	Let $\widehat{\mathbb{I}}=\mathbb{N}\cap([s_{k,j-1},t_{k,j-1}]\cup[s_{kj},n-1])$.
	Note that
	\begin{equation*}
		\frac{n-s_{kj}+t_{k,j-1}-s_{k,j-1}+1}{n}\geq\frac{t_{k,j-1}-s_{k,j-1}+1}{s_{kj}}\quad\text{and}\quad 0\leq\int_X\varphi_r\mathrm{d}\widehat{\mu}\leq1.
	\end{equation*}
	Combining \eqref{eq-thm-DC1-3-proof-1}, \eqref{eq-thm-DC1-3-proof-5}, \eqref{eq-thm-DC1-3-proof-12} and \eqref{eq-thm-DC1-3-proof-15}, we obtain that
	\begin{equation*}
		\begin{split}
			\int_X\varphi_r\mathrm{d}\delta_{x_{\xi}}^n&\leq\frac{1}{n}\sum_{i\in\widehat{\mathbb{I}}}\varphi_r(f^ix_{\xi})+\frac{s_{kj}-t_{k,j-1}+s_{k,j-1}-1}{n}\\
			&\leq\frac{n-s_{kj}+t_{k,j-1}-s_{k,j-1}+1}{n}\int_X\varphi_r\mathrm{d}\widehat{\mu}+\eta+\frac{s_{kj}-t_{k,j-1}+s_{k,j-1}-1}{n}\\
			&\leq\frac{t_{k,j-1}-s_{k,j-1}+1}{s_{kj}}\int_X\varphi_r\mathrm{d}\widehat{\mu}+\eta+\frac{s_{kj}-t_{k,j-1}+s_{k,j-1}-1}{s_{kj}}\\
			&<\frac{1+\theta}{1+\tau-2\theta}\int_X\varphi_r\mathrm{d}\widehat{\mu}+\eta+\frac{\tau+4\theta}{1+\tau-2\theta}<\int_X\varphi_r\mathrm{d}\nu-\eta.
		\end{split}
	\end{equation*}

	{\bf Case 6.}
	Assume that there exists $j$ with $1\leq j\leq k$ such that $t_{kj}+1\leq n\leq s_{k,j+1}-1$ (recall that $s_{k,k+1}=a_{k+1,1}$).
	The proof for this case is similar to Case~3.
	Let $\widehat{\mu}=\delta_{y_j}^{t_{kj}-s_{kj}+1}$.
	Then by \eqref{eq-thm-DC1-3-proof-9}, we have $\widehat{\mu}\in\mathcal{B}(\widetilde{K},\kappa/3)$.
	By definition of $\kappa$, we can choose $1\leq r\leq m$ such that $(\widehat{\mu},\nu)\in\mathcal{U}_r\times\mathcal{V}_r$.
	Combining \eqref{eq-thm-DC1-3-proof-1}, \eqref{eq-thm-DC1-3-proof-5}, \eqref{eq-thm-DC1-3-proof-12} and \eqref{eq-thm-DC1-3-proof-15}, we obtain that
	\begin{equation*}
		\begin{split}
			\int_X\varphi_r\mathrm{d}\delta_{x_{\xi}}^n&\leq\frac{1}{n}\sum_{i=s_{kj}}^{t_{kj}}\varphi_r(f^ix_{\xi})+\frac{n-t_{kj}+s_{kj}-1}{n}\\
			&\leq\frac{1}{n}\sum_{i=0}^{t_{kj}-s_{kj}}\varphi_r(f^iy_j)+\eta+\frac{n-t_{kj}+s_{kj}-1}{n}\\
			&=\frac{t_{kj}-s_{kj}+1}{n}\int_X\varphi_r\mathrm{d}\widehat{\mu}+\eta+\frac{n-t_{kj}+s_{kj}-1}{n}\\
			&\leq\frac{1+\theta}{1+\tau-2\theta}\int_X\varphi_r\mathrm{d}\widehat{\mu}+\eta+\frac{\tau+4\theta}{1+\tau-2\theta}<\int_X\varphi_r\mathrm{d}\nu-\eta.
		\end{split}
	\end{equation*}

	{\bf Case 7.}
	Assume that $a_{k1}+N+1\leq n\leq b_{k1}$ or $s_{k1}+N_1+1\leq n\leq t_{k1}$.
	The proof of this case is similar to Case~2 and Case~5.
	We only consider the case $s_{k1}+N_1+1\leq n\leq t_{k1}$, while the other is similar.
	Let
	\begin{equation*}
		\widehat{\mu}=\frac{b_{kk}-a_{kk}+1}{n-s_{k1}+b_{kk}-a_{kk}+1}\delta_{p_{\xi_k}}^{b_{kk}-a_{kk}+1}+\frac{n-s_{k1}}{n-s_{k1}+b_{kk}-a_{kk}+1}\delta_{y_1}^{n-s_{k1}}.
	\end{equation*}
	Then by \eqref{eq-thm-DC1-3-proof-9} and \eqref{eq-thm-DC1-3-proof-8}, we have $\widehat{\mu}\in\mathcal{B}(\widetilde{K},2\kappa/3)$.
	By definition of $\kappa$, we can choose $1\leq r\leq m$ such that $(\widehat{\mu},\nu)\in\mathcal{U}_r\times\mathcal{V}_r$.
	Let $\widehat{\mathbb{I}}=\mathbb{N}\cap([a_{kk},b_{kk}]\cup[s_{k1},n-1])$.
	\begin{equation*}
		\frac{n-s_{k1}+b_{kk}-a_{kk}+1}{n}\geq\frac{b_{kk}-a_{kk}+1}{s_{k1}}\quad\text{and}\quad 0\leq\int_X\varphi_r\mathrm{d}\widehat{\mu}\leq1.
	\end{equation*}
	Combining \eqref{eq-thm-DC1-3-proof-1}, \eqref{eq-thm-DC1-3-proof-5}, \eqref{eq-thm-DC1-3-proof-12} and \eqref{eq-thm-DC1-3-proof-15}, we obtain that
	\begin{equation*}
		\begin{split}
			\int_X\varphi_r\mathrm{d}\delta_{x_{\xi}}^n&\leq\frac{1}{n}\sum_{i\in\widehat{\mathbb{I}}}\varphi_r(f^ix_{\xi})+\frac{s_{k1}-b_{kk}+a_{kk}-1}{n}\\
			&\leq\frac{n-s_{k1}+b_{kk}-a_{kk}+1}{n}\int_X\varphi_r\mathrm{d}\widehat{\mu}+\eta+\frac{s_{k1}-b_{kk}+a_{kk}-1}{n}\\
			&\leq\frac{b_{kk}-a_{kk}+1}{s_{k1}}\int_X\varphi_r\mathrm{d}\widehat{\mu}+\eta+\frac{s_{k1}-b_{kk}+a_{kk}-1}{s_{k1}}\\
			&<\frac{1+\theta}{1+\tau-2\theta}\int_X\varphi_r\mathrm{d}\widehat{\mu}+\eta+\frac{\tau+4\theta}{1+\tau-2\theta}<\int_X\varphi_r\mathrm{d}\nu-\eta.
		\end{split}
	\end{equation*}

	It follows from the above seven cases that \eqref{eq-thm-DC1-3-proof-20} holds for all $n\geq a_{21}$.
	Therefore, \eqref{eq-thm-DC1-3-proof-19} holds, which implies that $\nu\notin V_f(x_{\xi})$.
	By the arbitrariness of $\nu\in L$ and $x_{\xi}\in S$, we conclude that
	\begin{equation*}
		S\cap\Big(\bigcup_{\nu\in L}G^{\nu}\Big)=\varnothing.
	\end{equation*}
	This completes the proof of Theorem~\ref{thm-DC1-3}.
\end{proof}

Theorem~\ref{thm-DC1-2} follows immediately from the following result.

\begin{theorem}\label{DC1-thm2}
	Suppose that $(X,f)$ satisfies the non-uniform $\tau$-specification property.
	If $\tau<\infty$, then for any $m+n$ distinct ergodic measures $\mu_1,\cdots,\mu_m,\nu_1,\cdots,\nu_n\in\mathcal{M}_f^{erg}(X)$ and any non-empty open set $U\subset X$, there exists an uncountable DC1-scrambled set contained in
	\begin{equation*}
		\Big(\bigcap_{1\leq i\leq m}\bigcap_{1\leq j\leq n}(G^{\mu_i}\setminus G^{\nu_j})\Big)\cap U\cap\mathrm{Trans}(f).
	\end{equation*}
\end{theorem}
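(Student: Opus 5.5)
The plan is to deduce Theorem~\ref{DC1-thm2} from Theorem~\ref{thm-DC1-3} with the choices $K=\{\mu_1,\dots,\mu_m\}$, $L=\{\nu_1,\dots,\nu_n\}$, and a distal pair $\{p_0,p_1\}$ chosen so that $V_f(p_0)=V_f(p_1)=\{\lambda\}$ for a suitable ergodic measure $\lambda$. The target set is contained in what Theorem~\ref{thm-DC1-3} produces. Indeed, if $x\in G^K\setminus\bigcup_{j}G^{\nu_j}$, then $\mu_i\in V_f(x)$ and $\nu_j\notin V_f(x)$ for all $i,j$, so
\begin{equation*}
x\in\bigcap_{i,j}(G^{\mu_i}\setminus G^{\nu_j}).
\end{equation*}
It therefore suffices to verify conditions~(1) and~(2).

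Condition~(1) is immediate. Take $K_i=\{\mu_i\}$; this set is connected and closed, and $G_{\mu_i}\neq\varnothing$ because $\mu_i$ is ergodic. Since $K$ and $L$ are finite, both are closed, and they are disjoint because the measures are distinct.

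For condition~(2), distinct ergodic measures are mutually singular, so $\rho_{\mathsf{tv}}(\mu_i,\nu_j)=1$. I would choose $\lambda$ ergodic with non-degenerate minimal support and $\lambda\notin\{\nu_1,\dots,\nu_n\}$. Then $\rho_{\mathsf{tv}}(\lambda,\nu_j)=1$ as well. Theorem~\ref{distal-thm5} supplies uncountably many such ergodic measures, so one avoiding the finitely many $\nu_j$ exists. Lemma~\ref{distal-lemma2} then gives a distal pair $\{p_0,p_1\}\subset G_\lambda$, which means $V_f(p_0)=V_f(p_1)=\{\lambda\}$. Consequently
\begin{equation*}
\overline{\mathrm{co}}(K\cup V_f(p_0)\cup V_f(p_1))=\mathrm{co}\{\mu_1,\dots,\mu_m,\lambda\},
\end{equation*}
which is a finite convex hull and hence already closed. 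If $\lambda$ happens to equal some $\mu_i$, I simply drop the duplicate. Proposition~\ref{pre-tv-prop1}, applied to the distinct measures $\mu_1,\dots,\mu_m,\lambda$ and $\nu_1,\dots,\nu_n$, gives $\rho_{\mathsf{tv}}(\mu,\nu)=1$ for every $\mu$ in this hull and every $\nu\in\mathrm{co}\{\nu_j\}\supset L$. Since $\tau<\infty$, we have $1>\tau/(1+\tau)$, so condition~(2) holds and Theorem~\ref{thm-DC1-3} yields the uncountable DC1-scrambled set in $U\cap\mathrm{Trans}(f)$.

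The only delicate point is choosing the distal pair so that its empirical measures do not interfere with $L$. If $\{p_0,p_1\}$ were arbitrary, $V_f(p_s)$ could contain some $\nu_j$ and condition~(2) would fail. This is why I pass through Theorem~\ref{distal-thm5}: it provides uncountably many candidate measures $\lambda$, only finitely many of which are excluded, and Lemma~\ref{distal-lemma2} puts the distal pair inside $G_\lambda$. Theorem~\ref{thm-DC1-2} is then the special case $m=n=1$.
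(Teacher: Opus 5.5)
Your proposal is correct and matches the paper's proof. You apply Theorem~\ref{thm-DC1-3} with $K=\{\mu_i\}$ and $L=\{\nu_j\}$, take a distal pair in $G_\lambda$ for an ergodic $\lambda$ obtained from Theorem~\ref{distal-thm5} and Lemma~\ref{distal-lemma2}, and verify condition~(2) through mutual singularity and Proposition~\ref{pre-tv-prop1}; the only cosmetic difference is that the paper chooses $\lambda$ outside all $m+n$ measures, which makes your remark about dropping a duplicate unnecessary.
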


\begin{proof}
	Let $M(n,\varepsilon)$ be an admissible gap function with $\tau(M)=\tau$.
	Let $K=\{\mu_1,\cdots,\mu_m\}$ and $L=\{\nu_1,\cdots,\nu_n\}$.
	By Theorem~\ref{distal-thm5}, one can choose an ergodic measure
	\begin{equation}\label{eq-DC1-thm2-proof-1}
		\lambda\in\mathcal{M}_f^{erg}(X)\setminus\{\mu_1,\cdots,\mu_m,\nu_1,\cdots,\nu_n\}
	\end{equation}
	such that $G_{\lambda}$ contains a distal pair $\{p_0,p_1\}$, then $V_f(p_0)=V_f(p_1)=\{\lambda\}$.
	
	It suffices to show that $K$, $L$ and $\{p_0,p_1\}$ satisfy conditions~(1) and~(2) of Theorem~\ref{thm-DC1-3}.
	It is known that every ergodic measure has generic points.
	Hence $K=\{\mu_1,\cdots,\mu_m\}\subset\mathcal{M}_f^{erg}(X)$ satisfies condition~(1) of Theorem~\ref{thm-DC1-3}.
	It remains to show that condition~(2) of Theorem~\ref{thm-DC1-3} holds. 
	Note that any two distinct ergodic measures $\mu$ and $\nu$ satisfy $\rho_{\mathsf{tv}}(\mu,\nu)=1$.
	By Proposition~\ref{pre-tv-prop1}, the set
	\begin{equation*}
		\widetilde{K}=\overline{\mathrm{co}}(\{\lambda\}\cup K)=\mathrm{co}\{\mu_1,\cdots,\mu_m,\lambda\}
	\end{equation*}
	satisfies
	\begin{equation*}
		\rho_{\mathsf{tv}}(\mu,\nu)=1,\quad\forall\,(\mu,\nu)\in\widetilde{K}\times L.
	\end{equation*}
	Hence the condition~(2) of Theorem~\ref{thm-DC1-3} holds.
	This completes the proof.
\end{proof}

\begin{proof}[{\bf Proof of Theorem~\ref{thm-DC1-2}}]
	Theorem~\ref{thm-DC1-2} follows directly from Theorem~\ref{DC1-thm2} by taking $m=n=1$, $\mu_1=\mu$ and $\nu_1=\nu$.
\end{proof}

\section{Examples}\label{Sect-example}

In this section, we will prove Theorem~\ref{thm-example-1} and Theorem~\ref{thm-example-2}, and provide an example with non-uniform specification which satisfies conditions of Theorem~\ref{thm-DC1-3}.
The following result provides a method for constructing subshifts with the non-uniform specification property.

\begin{theorem}[{\cite[Theorem~B]{Lin-Tian-Yu-2025+}}]\label{example-thm1}
	Given $\tau\in(0,\infty]$, a finite alphabet $\mathcal{A}$, and a subshift $Z\subset\mathcal{A}^{\mathbb{N}_0}$, there exists a one-sided subshift $(X,f)$ satisfying the non-uniform periodic $\tau$-specification property with gap function $M(n,\varepsilon)$ such that
	\begin{enumerate}
		\item for $\varepsilon>0$ sufficiently small, we have
		\begin{equation*}
			\lim_{n\rightarrow\infty}\frac{M(n,\varepsilon)}{n}=\tau;
		\end{equation*}
		\item $Z\subset X$;
		\item $\mathcal{M}_f^{erg}(X)$ can be written as $\mathcal{M}_f^{erg}(X)=\mathcal{M}_0\cup\mathcal{M}_f^{erg}(Z)$ such that $\overline{\mathcal{M}_0}\cap\overline{\mathcal{M}_f^{erg}(Z)}=\varnothing$;
		\item for any $x\in X\setminus\bigcup_{i\geq 0}f^{-i}Z$, we have $V_f(x)\cap\overline{\mathcal{M}_0}\neq\varnothing$;
		\item if $0<\tau<\infty$, then $(X,f)$ can be constructed such that for any $\mu\in\overline{\mathcal{M}_0}$, we have
		\begin{equation*}
			h_{\mu}(f)\leq \log 2+\frac{1}{1+\tau}\log|\mathcal{A}|;
		\end{equation*}
		if $\tau=\infty$, then for any $\delta>0$, $(X,f)$ can be constructed such that for any $\mu\in\overline{\mathcal{M}_0}$, we have
		\begin{equation*}
			h_{\mu}(f)\leq \log 2+\delta\log|\mathcal{A}|;
		\end{equation*}
	\end{enumerate}
\end{theorem}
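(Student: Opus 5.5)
This statement is quoted from \cite[Theorem~B]{Lin-Tian-Yu-2025+}, so in the paper it is used as a black box. If I had to prove it, the plan would be an explicit subshift construction: glue $Z$ to a ``free'' coded part in which arbitrary words may be concatenated only after a gap of length proportional to $\tau$ times the word length. Concretely, I would enlarge the alphabet to $\mathcal{A}\cup\{\ast\}$ (or use a marker word) and let $X$ be the closure of all sequences of the form
\[
w_1\,\ast^{g(|w_1|)}\,w_2\,\ast^{g(|w_2|)}\cdots,
\]
where each $w_i$ is a word of $\mathcal{L}(Z)$ and $g(n)\approx\tau n$. When $\tau=\infty$ I would take $g(n)$ superlinear, with $g(n)/n\to\infty$. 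Sequences that enter $Z$ eventually (a tail in $Z$) would also be allowed. Then $Z\subset X$ gives (2).

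\textbf{Specification.} Given orbit segments of length $n$, I would pad each with $\ast$'s up to the required gap length. This yields a periodic non-uniform specification with $M(n,\varepsilon)=g(n+c_\varepsilon)+c_\varepsilon$, so $M(n,\varepsilon)/n\to\tau$, which is (1).

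\textbf{Lower bound on gap functions.} I also need every admissible gap function $M'$ to satisfy $\tau(M')\ge\tau$. For this I would glue a long $Z$-word to a point of $Z$ whose future is far from it. A short gap would then force a word of $X$ containing two long $\mathcal{L}(Z)$-blocks separated by fewer than $g$ stars, and the construction forbids such words. This step needs care about which pairs of $Z$-words are incompatible; I would enforce incompatibility by making the separator pattern encode the length of the preceding block.

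\textbf{Measures and entropy.} Items (3)--(5) come from analysing ergodic measures.
\begin{enumerate}
\item An ergodic measure giving zero mass to the cylinder $[\ast]$ is supported on $Z$. Otherwise its generic points have density of $\ast$ bounded below, roughly by $\tau/(1+\tau)$, because each block of length $n$ forces about $\tau n$ stars. Hence $\mathcal{M}_0$ consists of measures with $\mu([\ast])\ge c>0$, and $\overline{\mathcal{M}_0}$ is separated from $\mathcal{M}_f(Z)$, giving (3).
\item A point not eventually in $Z$ sees infinitely many separators with positive upper density, so some limit measure charges $[\ast]$. That gives (4).
\item The entropy bound: a fraction at most $1/(1+\tau)$ of the symbols are free $\mathcal{A}$-symbols. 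The block structure (positions of separators) costs at most $\log 2$, which gives (5).
\end{enumerate}

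\textbf{Main obstacle.} I expect the hardest step to be the rigidity that $\tau$ is optimal, i.e.\ that no admissible gap function does better. This requires designing the separators so that shorter transitions are genuinely impossible, while still keeping the limit-measure claims (3)--(4) intact.
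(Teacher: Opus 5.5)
Your construction is essentially the one behind \cite[Theorem~B]{Lin-Tian-Yu-2025+} recalled in Section~\ref{Sect-example}. That construction uses a separator $0$ and forbids all words over $\mathcal{A}$ outside $\mathcal{L}(Z)$ and all words $x_1\cdots x_s0^tx_{s+1}$ with $x_i\in\mathcal{A}$ and $t<\tau s$. The match holds only if you impose \emph{at least} $g(s)$ separators after a maximal $\mathcal{A}$-block of length $s$. With exactly $g(|w_i|)$ separators, longer gaps can become unfillable: for $g(s)=s$, a window $\ast a\ast$ can be joined to a later window $\ast b\ast$ only across an even gap, since every unit $u\ast^{|u|}$ has even length.

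More seriously, the step you single out, $\tau(M')\ge\tau$ for every admissible $M'$, fails as you plan it. Gluing a long $Z$-word to a point of $Z$ forces no separators. The shadowing point may continue the first block with $\mathcal{A}$-symbols straight into the second, producing one longer $\mathcal{L}(Z)$-block. Since $Z$ is arbitrary (e.g.\ $Z=\mathcal{A}^{\mathbb{N}_0}$, where any two words concatenate), incompatible pairs need not exist. Encoding block lengths in the separators does not help, because the bridge contains no separator.

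The missing idea is to let the first orbit segment \emph{end with a separator}. For $w\in\mathcal{L}(Z)$ with $|w|=n-1$, take the point $w\ast^\infty\in X$, and take $\varepsilon$ so small that $\varepsilon$-shadowing forces agreement of coordinates. Any shadowing point then contains the closed block $w$ followed by $\ast$, and so has at least $g(n-1)$ separators before its next $\mathcal{A}$-symbol. If the second segment starts with an $\mathcal{A}$-symbol, this gives $M'(n,\varepsilon)\ge g(n-1)$, hence $\tau(M')\ge\tau$, for every $Z$.

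Your argument for item (4) also has a gap: a limit measure charging $[\ast]$ need not lie in $\overline{\mathcal{M}_0}$. Let $\tau>1$, let $\mu_Z\in\mathcal{M}_f^{erg}(Z)$ have generic point $y$, and let $x=w_1\ast^{t_1}w_2\ast^{t_2}\cdots$, where the $w_k$ are initial words of $y$ of rapidly increasing length and $t_k\ge\tau|w_k|$. Then $\frac12\mu_Z+\frac12\delta_{\ast^\infty}\in V_f(x)$ charges $[\ast]$. Yet its $\mathcal{A}$-density $\frac12$ exceeds $\frac{1}{1+\tau}$, which is the bound your own argument for (3) gives on all of $\overline{\mathcal{M}_0}$ (cylinders are clopen).

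You must select the times along which you take limits:
\begin{enumerate}
\item If $x$ ends in $\ast^\infty$, then $V_f(x)=\{\delta_{\ast^\infty}\}\subset\mathcal{M}_0$.
\item Otherwise, $x$ has infinitely many separators (it is not eventually in $Z$), so there are infinitely many $n$ with $x_{n-1}=\ast$ and $x_n\in\mathcal{A}$.
\end{enumerate}
For each such $n$, the periodic point $(x_0\cdots x_{n-1})^\infty$ lies in $X$, because every block keeps its length and is followed by a complete separator run. Its orbit measure belongs to $\mathcal{M}_0$ and is $O(1/n)$-close to $\delta_x^n$. Hence any weak$^*$ limit of these $\delta_x^n$ lies in $V_f(x)\cap\overline{\mathcal{M}_0}$.

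Finally, for $\tau=\infty$, superlinearity of $g$ alone does not give (5). You must choose $g$ depending on $\delta$ with $s/(s+g(s))\le\delta$ for all $s\ge 1$.
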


\begin{remark}\label{example-rmk2}
	In the proof of Theorem~\ref{example-thm1}(3) in \cite[\S~4.1]{Lin-Tian-Yu-2025+}, it was proved that $\overline{\mathcal{M}_0}\cap\mathcal{M}_f(Z)=\varnothing$.
	See \cite[\S~4.1, (4.1) and (4.2)]{Lin-Tian-Yu-2025+}.
	Thus for any $x\in X\setminus\bigcup_{i\geq 0}f^{-i}Z$, we have $V_f(x)\cap\mathcal{M}_f(X)\setminus\mathcal{M}_f(Z)\neq\varnothing$.
\end{remark}

For a finite alphabet $\mathcal{A}$, assume that the one-sided full shift $\mathcal{A}^{\mathbb{N}_0}$ is equipped with the metric
\begin{equation*}
    d(x,y)=\sum_{j=0}^{\infty}2^{-j}|x_j-y_j|.
\end{equation*}

\begin{proof}[{\bf Proof of Theorem~\ref{thm-example-1}}]
	Consider the one-sided full shift $\{0,1\}^{\mathbb{N}_0}$.
	Let $f$ denote the shift action.
	Consider the set of forbidden words
	\begin{equation*}
		\mathcal{F}=\{1^s0^t1:s,t\in\mathbb{N},\ t<s^2\}.
	\end{equation*}
	Define
	\begin{equation*}
		X:=X(\mathcal{F}):=\{x\in\{0,1\}^{\mathbb{N}_0}:x_ix_{i+1}\cdots x_l\notin\mathcal{F},\ \forall\,0\leq i\leq l\}.
	\end{equation*}
	The above construction coincides with the construction in the proof of Theorem~\ref{example-thm1} for $\tau=\infty$, with $\mathcal{A}=\{1\}$, $Z=\{1^{\infty}\}$ and $\sigma=1$.
	It follows from the proof of Theorem~\ref{example-thm1} (see \cite[\S~4.1]{Lin-Tian-Yu-2025+}) that $(X,f)$ satisfies the non-uniform $\infty$-specification property.

	Let $\mu=\delta_{1^{\infty}}$ and $\nu=\delta_{0^{\infty}}$.
	Then $\mu,\nu\in\mathcal{M}_f^{erg}(X)$ and $\mu\neq\nu$.
	We show that
	\begin{equation}\label{eq-thm-example-1-proof-1}
		G^{\mu}\setminus G^{\nu}=\bigcup_{j\geq0}f^{-j}\{1^{\infty}\}.
	\end{equation}
	Each point in the right-hand side is generic for $\mu$, and therefore belongs to $G^{\mu}\setminus G^{\nu}$.
	Fix
	\begin{equation*}
		x\in G^{\mu}\setminus\bigcup_{j\geq0}f^{-j}\{1^{\infty}\}.
	\end{equation*}
	Then $x$ has infinitely many $0$'s and $1$'s.
	Thus $x$ can be written as
	\begin{equation*}
		x=0^{t_0}1^{s_1}0^{t_1}1^{s_2}0^{t_2}\cdots,\quad t_0\in\mathbb{N}_0,\; s_i,t_i\in\mathbb{N},\; t_i\geq s_i^2.
	\end{equation*}
	Let
	\begin{equation*}
		A_i:=t_0+\sum_{j=1}^{i-1}(s_j+t_j),\quad i\geq1.
	\end{equation*}
	Choose $n_k\to\infty$ such that $\delta_x^{n_k}\to\mu$.
	Let
	\begin{equation*}
		\varepsilon_k:=\frac{1}{n_k}|\{0\leq j<n_k:x_j=0\}|.
	\end{equation*}
	Since $\delta_x^{n_k}\to\mu$, we have $\varepsilon_k\to0$.
	For all sufficiently large $k$, choose $i_k\in\mathbb{N}$ with $A_{i_k}<n_k\leq A_{i_k+1}$.
	Write $a_k=A_{i_k}$, $u_k=s_{i_k}$ and $v_k=t_{i_k}$.
	Since $t_i\geq s_i$, the first $a_k$ symbols contain at least $a_k/2$ $0$'s and at most $a_k/2$ $1$'s.
	The number of $1$'s among the first $n_k$ symbols is at most $a_k/2+u_k$.
	Hence
	\begin{equation*}
		a_k\leq2\varepsilon_kn_k,\quad\text{and}\quad u_k\geq(1-\varepsilon_k)n_k-\frac{a_k}{2}\geq(1-2\varepsilon_k)n_k.
	\end{equation*}
	It follows that $u_k\to\infty$ and $a_k/u_k\to0$.
	Since $v_k\geq u_k^2$, we have
	\begin{equation*}
		\lim_{k\to\infty}\frac{a_k+u_k}{v_k}=0.
	\end{equation*}

	For any $c\in\{0,1\}$, $a\in\mathbb{N}_0$ and $l\in\mathbb{N}$ with $x_ax_{a+1}\cdots x_{a+l-1}=c^l$, we have
	\begin{equation}\label{eq-thm-example-1-proof-2}
		\sum_{j=0}^{l-1}d(f^{a+j}x,c^{\infty})\leq\sum_{j=0}^{l-1}\sum_{h=l-j}^{\infty}2^{-h}=2(1-2^{-l})<2.
	\end{equation}
	Let $N_k=a_k+u_k+v_k$.
	Since $\mathrm{diam}(X)=2$, by \eqref{eq-thm-example-1-proof-2}, we have
	\begin{equation*}
		\begin{split}
			\rho(\delta_x^{N_k},\nu)&\leq\frac{1}{N_k}\sum_{j=0}^{N_k-1}d(f^jx,0^{\infty})\\
			&=\frac{1}{N_k}\sum_{j=0}^{a_k+u_k-1}d(f^jx,0^{\infty})+\frac{1}{N_k}\sum_{j=a_k+u_k}^{N_k-1}d(f^jx,0^{\infty})\leq\frac{2(a_k+u_k)+2}{a_k+u_k+v_k}\to0.
		\end{split}
	\end{equation*}
	Consequently, $\nu\in V_f(x)$, which proves \eqref{eq-thm-example-1-proof-1}.

	By \eqref{eq-thm-example-1-proof-1}, for any $x,y\in G^{\mu}\setminus G^{\nu}$, there exists $j\geq0$ such that $f^jx=f^jy=1^{\infty}$.
	In particular, $d(f^nx,f^ny)=0$ for every $n\geq j$, so $\{x,y\}$ is not a DC1-scrambled pair.
	This completes the proof.
\end{proof}

\begin{proof}[{\bf Proof of Theorem~\ref{thm-example-2}}]
	Consider the one-sided full shift $\{0,1\}^{\mathbb{N}_0}$.
	Let $f$ denote the shift action.
	Consider the set of forbidden words
	\begin{equation*}
		\mathcal{F}_{\tau}:=\{1^s0^t1:s,t\in\mathbb{N},\ t<\tau s\},
	\end{equation*}
	and define
	\begin{equation*}
		X:=X(\mathcal{F}_{\tau}):=\{x\in\{0,1\}^{\mathbb{N}_0}:x_ix_{i+1}\cdots x_l\notin\mathcal{F}_{\tau},\ \forall\,0\leq i\leq l\}.
	\end{equation*}
	This is the construction in the proof of \cite[Theorem~B]{Lin-Tian-Yu-2025+}, with $\mathcal{A}=\{1\}$ and $Z=\{1^{\infty}\}$.
	It follows from the proof Theorem~\ref{example-thm1} (see \cite[\S~4.1]{Lin-Tian-Yu-2025+}) that $(X,f)$ satisfies the non-uniform $\tau$-specification.

	Let $\mu:=\delta_{1^{\infty}}$ and $\omega:=\delta_{0^{\infty}}$ and
	\begin{equation*}
		\nu:=\frac{1}{1+\tau}\mu+\frac{\tau}{1+\tau}\omega.
	\end{equation*}
	Then $\rho_{\mathsf{tv}}(\mu,\nu)=\tau/(1+\tau)$.
	Let $K=\{\mu\}$ and $L=\{\nu\}$.
	The sets $K,L$ are disjoint and compact, and $K$ is connected with $1^{\infty}\in G_K$.
	Thus condition~(1) of Theorem~\ref{thm-DC1-3} holds.
	Let $m=\lceil\tau\rceil$ and take
	\begin{equation*}
		p_0:=(10^m)^{\infty}, p_1:=f(p_0)\in X \quad\text{and}\quad \lambda:=\frac{1}{m+1}\sum_{j=0}^{m}\delta_{f^jp_0}.
	\end{equation*}
	They are distinct points on the same periodic orbit, and hence form a distal pair.
	This shows (1).
	Note that $V_f(p_0)=V_f(p_1)=\{\lambda\}$.
	Hence
	\begin{equation*}
		\overline{\mathrm{co}}(K\cup V_f(p_0)\cup V_f(p_1))=\mathrm{co}\{\mu,\lambda\}.
	\end{equation*}
	Note that every $\eta\in\mathrm{co}\{\mu,\lambda\}$ satisfies $\eta(\{0^{\infty}\})=0$ and $\nu(\{0^{\infty}\})=\tau/(1+\tau)$.
	Therefore
	\begin{equation*}
		\rho_{\mathsf{tv}}(\eta,\nu)\geq\frac{\tau}{1+\tau},\quad\forall\,\eta\in\mathrm{co}\{\mu,\lambda\}.
	\end{equation*}
	Since $\rho_{\mathsf{tv}}(\mu,\nu)=\tau/(1+\tau)$, this shows (2).

	It remains to show that
	\begin{equation}\label{eq-thm-example-2-proof-1}
		G^K\setminus\bigcup_{\xi\in L}G^{\xi}=G^{\mu}\setminus G^{\nu}=\bigcup_{j\geq0}f^{-j}\{1^{\infty}\}.
	\end{equation}
	Every point in the right-hand side is generic for $\mu$ and belongs to $G^{\mu}\setminus G^{\nu}$.
	Take
	\begin{equation*}
		x\in G^{\mu}\setminus\bigcup_{j\geq0}f^{-j}\{1^{\infty}\}.
	\end{equation*}
	As in the proof of Theorem~\ref{thm-example-1}, write
	\begin{equation*}
		x=0^{t_0}1^{s_1}0^{t_1}1^{s_2}0^{t_2}\cdots,\quad\text{where}\; t_0\in\mathbb{N}_0,\; s_i,t_i\in\mathbb{N},\; t_i\geq\lceil\tau s_i\rceil,
	\end{equation*}
	and let $A_i=t_0+\sum_{j=1}^{i-1}(s_j+t_j)$.
	Choose $n_k\to\infty$ such that $\delta_x^{n_k}\to\mu$, and let
	\begin{equation*}
		\varepsilon_k:=\frac{1}{n_k}|\{0\leq j<n_k:x_j=0\}|\to0.
	\end{equation*}
	For all sufficiently large $k$, choose $i_k$ with $A_{i_k}<n_k\leq A_{i_k+1}$, and write $a_k=A_{i_k}$, $u_k=s_{i_k}$ and $v_k=t_{i_k}$.
	Since $t_i\geq\tau s_i$, the first $a_k$ symbols contain at least $\tau a_k/(1+\tau)$ $0$'s and at most $a_k/(1+\tau)$ $1$'s.
	It follows that
	\begin{equation*}
		a_k\leq\frac{1+\tau}{\tau}\varepsilon_kn_k\quad\text{and}\quad
		u_k\geq(1-\varepsilon_k)n_k-\frac{a_k}{1+\tau}\geq\left(1-\frac{1+\tau}{\tau}\varepsilon_k\right)n_k.
	\end{equation*}
	Consequently, we have $u_k\to\infty$ and $a_k/u_k\to0$.
	Let
	\begin{equation*}
		l_k:=\lceil\tau u_k\rceil\leq v_k,\quad N_k:=a_k+u_k+l_k.
	\end{equation*}
	By \eqref{eq-thm-example-1-proof-2}, we have
	\begin{equation*}
		\begin{split}
			\rho\Big(\delta_x^{N_k},\frac{a_k+u_k}{N_k}\mu+\frac{l_k}{N_k}\omega\Big)&\leq\frac{1}{N_k}\bigg(\sum_{j=0}^{a_k-1}d(f^jx,1^{\infty})+\sum_{j=a_k}^{a_k+u_k-1}d(f^jx,1^{\infty})+\sum_{j=a_k+u_k}^{N_k-1}d(f^jx,0^{\infty})\bigg)\\
			&\leq\frac{1}{N_k}\bigg(\sum_{j=0}^{a_k-1}d(f^jx,1^{\infty})+\sum_{j=a_k}^{a_k+u_k-1}d(f^jx,1^{\infty})+\sum_{j=a_k+u_k}^{N_k-1}d(f^jx,0^{\infty})\bigg)\\
			&\leq\frac{2a_k+2+2}{N_k}=\frac{2a_k+4}{N_k}\to0.
		\end{split}
	\end{equation*}
	Since $a_k/u_k\to0$ and $l_k/u_k\to\tau$, we obtain that
	\begin{equation*}
		\delta_x^{N_k}\to\frac{1}{1+\tau}\mu+\frac{\tau}{1+\tau}\omega=\nu.
	\end{equation*}
	Thus $x\in G^{\nu}$, which shows \eqref{eq-thm-example-2-proof-1}.

	By \eqref{eq-thm-example-2-proof-1}, for any $x,y\in G^{\mu}\setminus G^{\nu}$, there exists $j\geq0$ such that $f^jx=f^jy=1^{\infty}$.
	In particular, $d(f^nx,f^ny)=0$ for every $n\geq j$, so $\{x,y\}$ is not a DC1-scrambled pair.
	This shows (3) and completes the proof.
\end{proof}

The following example shows that the DC1-scrambled set obtained in Theorem~\ref{thm-DC1-3} cannot, in general, be chosen inside the corresponding saturated set $G_K$.

\begin{proposition}
	For any $0<\tau<\infty$, there exists a dynamical system $(X,f)$ with non-uniform $\tau$-specification property such that
	\begin{enumerate}
		\item there exist two non-empty compact subsets $K,L\subset\mathcal{M}_f(X)$ and a distal pair $\{p_0,p_1\}\subset X$ satisfying conditions~(1) and~(2) of Theorem~\ref{thm-DC1-3};
		\item $G_K\neq\varnothing$;
		\item $G_K$ has no DC1-scrambled pairs.
	\end{enumerate}
\end{proposition}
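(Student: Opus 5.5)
I will reuse the subshift from the proof of Theorem~\ref{thm-example-2}: $X=X(\mathcal{F}_\tau)$ with forbidden words $1^s0^t1$, $t<\tau s$. By the proof of Theorem~\ref{example-thm1} it has the non-uniform $\tau$-specification property. I keep $\mu=\delta_{1^\infty}$, $\omega=\delta_{0^\infty}$, the periodic distal pair $p_0=(10^m)^\infty$, $p_1=f(p_0)$ with $m=\lceil\tau\rceil$, and $\lambda$ as there. The change is in the choice of $K$ and $L$: I want a saturated set $G_K$ that is non-empty but consists only of eventually-equal points.

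\textbf{Choice of $K$ and $L$.} I take $K=\{\mu\}$, so that $G_K=G_\mu$ is non-empty (it contains $1^\infty$) and condition~(1) of Theorem~\ref{thm-DC1-3} holds. For $L$ I take $L=\{\omega\}$. Then $\widetilde K=\mathrm{co}\{\mu,\lambda\}$, and every element of $\widetilde K$ gives zero mass to $\{0^\infty\}$, while $\omega$ is concentrated there. Hence $\rho_{\mathsf{tv}}(\eta,\omega)=1>\tau/(1+\tau)$ for all $\eta\in\widetilde K$, and condition~(2) holds. Alternatively, Proposition~\ref{pre-tv-prop1} gives this directly, since $\mu,\lambda,\omega$ are distinct ergodic measures.

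\textbf{Key step: $G_\mu=\bigcup_{j\ge0}f^{-j}\{1^\infty\}$.} The inclusion $\supseteq$ is clear. For $\subseteq$, take $x\in G_\mu$ that is not eventually $1^\infty$. The argument in the proof of Theorem~\ref{thm-example-2} shows that such an $x$ lies in $G^\nu$ with $\nu=\frac1{1+\tau}\mu+\frac{\tau}{1+\tau}\omega\neq\mu$. So $V_f(x)\ne\{\mu\}$, contradicting $x\in G_\mu$.

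I also need to handle the case where $x$ has only finitely many $1$'s. Then $x$ is eventually $0^\infty$, so $x$ is generic for $\omega$, not for $\mu$. It follows that every $x\in G_\mu$ is eventually $1^\infty$.

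\textbf{Conclusion.} For any $x,y\in G_K$ there is $j$ with $f^jx=f^jy=1^\infty$. Therefore $d(f^nx,f^ny)=0$ for all $n\ge j$, the $\liminf$ condition in the definition of DC1 fails, and no pair in $G_K$ is DC1-scrambled.

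\textbf{Main obstacle.} The real work is reusing the density computation from the proof of Theorem~\ref{thm-example-2} correctly. That computation only needs one subsequence $n_k$ with $\delta_x^{n_k}\to\mu$, and membership in $G_\mu$ supplies this. I should check that the estimates $a_k/u_k\to0$ and $l_k\le v_k$ still hold here, and I expect they do verbatim.
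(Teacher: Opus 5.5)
Your proposal is correct, but it takes a different and much more elementary route than the paper.

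\textbf{Your route.} You reuse $X(\mathcal{F}_\tau)$ from Theorem~\ref{thm-example-2} with $K=\{\delta_{1^\infty}\}$ and $L=\{\delta_{0^\infty}\}$. The density computation from that proof then gives $G_K=\bigcup_{j\ge0}f^{-j}\{1^\infty\}$. All the checks go through: condition~(2) holds with total variation distance $1$, $p_0,p_1\in X$ because $m=\lceil\tau\rceil\ge\tau$, and $\nu\neq\mu$ because $\tau>0$.

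\textbf{The paper's route.} The paper builds a new subshift over $\{0,1,2,3\}$ via Theorem~\ref{example-thm1} with $Z=Y\cup\{3^\infty\}$, where $Y$ is a non-regular Oxtoby Toeplitz subshift with exactly two ergodic measures. It takes $K=\mathcal{M}_f(Y)$ and $L=\{\delta_{3^\infty}\}$. Remark~\ref{example-rmk2} confines $G_K$ to $\bigcup_j f^{-j}Y$. Oprocha's theorem on regularly recurrent points then rules out DC1 pairs in $Y$.

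\textbf{Your key step is not needed.} For any fixed point $p$ in any system, a point $x\in G_{\delta_p}$ satisfies $\liminf_n\delta_x^n(B(p,t/2))\ge 1$ for every $t>0$. So for $x,y\in G_{\delta_p}$ the set $\{j:d(f^jx,f^jy)<t\}$ has density one for every $t>0$, and the $\liminf$ condition of DC1 fails automatically.

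\textbf{What each approach buys.} Your example meets the literal statement with no external input, but it is degenerate. $K$ is a Dirac mass at a fixed point, and $G_K$ is countable, so the obstruction is essentially trivial. The paper's example shows the phenomenon persists for a non-degenerate $K$: a segment containing non-atomic measures. There $G_K$ contains a dense $G_\delta$ subset of the infinite minimal set $Y$, hence is uncountable. That is a more convincing illustration that the DC1-scrambled set of Theorem~\ref{thm-DC1-3} cannot, in general, be placed inside $G_K$.
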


\begin{proof}
Let $\mathcal{A}:=\{1,2,3\}$ and consider a one-sided full shift $\{0,1,2,3\}^{\mathbb{N}_0}$ over $\widetilde{\mathcal{A}}:=\{0\}\bigcup\mathcal{A}=\{0,1,2,3\}$.
Let $f$ denote the shift action on $\widetilde{\mathcal{A}}^{\mathbb{N}_0}$.
Let $Y\subset\{1,2\}^{\mathbb{N}_0}$ be the one-sided projection of a non-regular Oxtoby Toeplitz subshift constructed as in \cite[Section~3 and Theorem~3.2]{Williams-1984}.
Then $(Y,f)$ is minimal, whose natural extension is the corresponding two-sided Oxtoby Toeplitz subshift.
Since ergodic measures correspond under the natural extension, $(Y,f)$ has exactly two ergodic measures, denoted by $\nu_1$ and $\nu_2$.
By minimality and ergodicity, $G^{\nu_1}$ and $G^{\nu_2}$ are dense $G_{\delta}$ sets.
Moreover, it follows from the ergodic decomposition theorem that $\mathcal{M}_f(Y)=\mathrm{co}\{\nu_1,\nu_2\}$.
Since $V_f(x)$ is non-empty, compact and connected for every $x\in Y$ by \cite[Proposition~3.8]{DGS1976}, it follows that
\begin{equation*}
    G_{\mathcal{M}_f(Y)}=G^{\mathcal{M}_f(Y)}=G^{\nu_1}\cap G^{\nu_2}
\end{equation*}
is a dense $G_{\delta}$ set.
Finally, the one-sided projection of a Toeplitz point is regularly recurrent.
Hence $Y$ contains no DC1-scrambled pair by \cite[Corollary~4.2 and the paragraph following it, p.~418]{Oprocha-2012}.

Consider forbidden words $\mathcal{F}$ over $\widetilde{\mathcal{A}}$ consisting of
\begin{itemize}
	\item adjacent pairs $\{13,31,23,32\}$;
	\item all finite words over $\{1,2\}$ that do not appear in $Y$;
	\item all words over $\widetilde{\mathcal{A}}$ that are of the form $x_1\cdots x_s0^tx_{s+1}$, where $s,t\in\mathbb{N}$, $x_i\neq 0$ and $t<\tau s$.
\end{itemize}
Define
\begin{equation*}
	X:=X(\mathcal{F}):=\{x\in\widetilde{\mathcal{A}}^{\mathbb{N}_0}:x_ix_{i+1}\cdots x_l\notin\mathcal{F},\;\forall\,0\leq i\leq l\}.
\end{equation*}
Let $Z:=Y\cup\{3^{\infty}\}$.
The above construction coincides with the construction in the proof of Theorem~\ref{example-thm1}, applied to the alphabet $\mathcal{A}$ and the subshift $Z$.
It follows from the proof of Theorem~\ref{example-thm1} (see \cite[\S~4.1]{Lin-Tian-Yu-2025+}) that $(X,f)$ satisfies the non-uniform $\tau$-specification.

Let $\mu=\delta_{3^{\infty}}$ and $K=\mathcal{M}_f(Y)$.
Then $G_K\neq\varnothing$, which shows (2).
Choose a distal pair $\{p_0,p_1\}\subset Y$.
Then
\begin{equation*}
	\overline{\mathrm{co}}(K\cup V_f(p_0)\cup V_f(p_1))=K.
\end{equation*}
Since $\mu$, $\nu_1$ and $\nu_2$ are ergodic, by Proposition~\ref{pre-tv-prop1}, we have
\begin{equation*}
	\rho_{\mathsf{tv}}(\mu,\nu)=1>\frac{\tau}{1+\tau},\quad\forall\nu\in K.
\end{equation*}
In particular, $(X,f)$ satisfies conditions~(1) and~(2) of Theorem~\ref{thm-DC1-3}, with $L=\{\mu\}$.
This shows (1) and (2).

It remains to show that $G_K$ contains no DC1-scrambled pair.
It follows from Theorem~\ref{example-thm1} and Remark~\ref{example-rmk2} that for every $x\in X\setminus\bigcup_{j\geq0}f^{-j}Z$, the set $V_f(x)$ contains a measure not belonging to $\mathcal{M}_f(Z)$.
Since $K\subset\mathcal{M}_f(Y)\subset\mathcal{M}_f(Z)$, we have
\begin{equation*}
	G_K\subset\bigcup_{j\geq0}f^{-j}Z.
\end{equation*}
For any $x\in G_K$, if $f^jx=3^{\infty}$ for some $j\geq0$, then $V_f(x)=\{\mu\}\neq K$.
Consequently,
\begin{equation*}
	G_K\subset\bigcup_{j\geq0}f^{-j}Y.
\end{equation*}
Moreover, $G_K$ has no DC1-scrambled pairs.
Indeed, suppose for a contradiction that $\{x,y\}\subset G_K$ is a DC1-scrambled pair.
There exists $j\geq0$ such that $f^jx,f^jy\in Y$.
This implies that $\{f^jx,f^jy\}$ is a DC1-scrambled pair, which contradicts the fact that $Y$ has no DC1 pairs.
This completes the proof.
\end{proof}


\begin{thebibliography}{99}


\bibitem{Berthe-Holton-Zamboni-2006}
V. Berth\'e, C. Holton and L. Q. Zamboni,
{\it Initial powers of Sturmian sequences},
Acta Arith. {\bf 122} (2006), no.~4, 315--347.

\bibitem{Bowen-1971}
R. Bowen,
{\it Periodic points and measures for Axiom {$A$} diffeomorphisms},
Trans. Amer. Math. Soc. {\bf 154} (1971), 377--397.


%\bibitem{Burguet-2020}
%D. Burguet,
%{\it Topological and almost Borel universality for systems with the weak specification property},
%Ergodic Theory Dynam. Systems. 40 (8) (2020), 2098-2115.

\bibitem{Chen-Tian-2021}
A. Chen and X. Tian,
{\it Distributional chaos in multifractal analysis, recurrence and transitivity},
Ergodic Theory Dynam. Systems {\bf 41} (2021), 349--378.

\bibitem{Chen-Kupper-Shu-2005}
E. Chen, T. K\"{u}pper and L. Shu,
{\it Topological entropy for divergence points},
Ergodic Theory Dynam. Systems {\bf 25} (2005), no.~4, 1173--1208.

%\bibitem{Dateyama-1983}
%M. Dateyama,
%{\it Invariant measures for homeomorphisms with almost weak specification},
%Probability theory and mathematical statistics (Tbilisi, 1982), Lecture Notes in Math. vol. 1021, Springer, Berlin, 1983, 93-96.

\bibitem{DGS1976}
M. Denker, C. Grillenberger and K. Sigmund,
{\it Ergodic Theory on Compact Spaces},
Lecture Notes in Mathematics, vol.~527, Springer-Verlag, 1976.

%\bibitem{Grillenberger-1993}
%C. Grillenberger,
%{\it Constructions of strictly ergodic systems},
%Z. Wahrscheinlichkeitstheorie verw Gebiete 25, 323-334 (1973).

\bibitem{Hou-Tian-Zhao-2025}
X. Hou, X. Tian and X. Zhao,
{\it Chaoticity of generic points for ergodic measures in hyperbolic systems and beyond},
J. Differential Equations {\bf 436} (2025), Paper No. 113236, 48 pp.

\bibitem{Hou-Tian-Zhao-2026}
X. Hou, X. Tian and X. Zhao,
{\it Distributional chaos in shrinking target problems},
Ergodic Theory Dynam. Systems {\bf 46} (2026), no.~4, 1020--1042.

\bibitem{Kwietniak-Lacka-Oprocha-2016}
D. Kwietniak, M. \L\c{a}cka and P. Oprocha,
{\it A panorama of specification-like properties and their consequences},
in {\it Dynamics and Numbers}, Contemp. Math. {\bf 669}, Amer. Math. Soc., Providence, RI, 2016, 155--186.

\bibitem{Li-Wu-2014}
J. Li and M. Wu,
{\it Generic property of irregular sets in systems satisfying the specification property},
Discrete Contin. Dyn. Syst. {\bf 34} (2014), no.~2, 635--645.

\bibitem{Li-Yorke-1975}
T. Y. Li and J. A. Yorke,
{\it Period three implies chaos},
Amer. Math. Monthly {\bf 82} (1975), no.~10, 985--992.

\bibitem{Lin-Tian-Yu-2024}
W. Lin, X. Tian and C. Yu,
{\it Similarities and differences between specification and non-uniform specification},
Ergodic Theory Dynam. Systems {\bf 44} (2024), no.~12, 3501--3529.

\bibitem{Lin-Tian-Yu-2025+}
W. Lin, X. Tian and C. Yu,
{\it Variations of topological theory and ergodic theory via gap function in non-uniform specification},
arXiv:2508.17800v2, 2025.

%\bibitem{Marcus-1980}
%B. Marcus,
%{\it A note on periodic points for ergodic toral automorphisms},
%Monatsh. Math. 89 (2) (1980), 121-129.

\bibitem{Oprocha-2009}
P. Oprocha,
{\it Distributional chaos revisited},
Trans. Amer. Math. Soc. {\bf 361} (2009), no.~9, 4901--4925.

\bibitem{Oprocha-2012}
P.~Oprocha,
{\it Minimal systems and distributionally scrambled sets},
Bull. Soc. Math. France {\bf 140} (2012), no.~3, 401--439.

\bibitem{Pavlov-2016}
R. Pavlov,
{\it On intrinsic ergodicity and weakenings of the specification property},
Adv. Math. {\bf 295} (2016), 250--270.

%\bibitem{Pavlov-2019}
%R. Pavlov,
%{\it On controlled specification and uniqueness of the equilibrium state in expansive systems},
%Nonlinearity. 32 (7) (2019), 2441-2466.

%\bibitem{Quas-Soo-2016}
%A. Quas and T. Soo,
%{\it Ergodic universality of some topological dynamical systems},
%Trans. Amer. Math. Soc. 368 (6) (2016), 4137-4170.

\bibitem{Schweizer-Smital-1994}
B. Schweizer and J. Sm\'ital,
{\it Measures of chaos and a spectral decomposition of dynamical systems on the interval},
Trans. Amer. Math. Soc. {\bf 344} (1994), no.~2, 737--754.

\bibitem{Thompson-2010}
D. J. Thompson,
{\it The irregular set for maps with the specification property has full topological pressure},
Dyn. Syst. {\bf 25} (2010), no.~1, 25--51.

\bibitem{Thompson-2012}
D. J. Thompson,
{\it Irregular sets, the {$\beta$}-transformation and the almost specification property},
Trans. Amer. Math. Soc. {\bf 364} (2012), no.~10, 5395--5414.

\bibitem{Villani-2009}
C. Villani,
{\it Optimal transport. Old and new},
Grundlehren der mathematischen Wissenschaften, vol.~338, Springer-Verlag, 2009.

\bibitem{Williams-1984}
S.~Williams,
{\it Toeplitz minimal flows which are not uniquely ergodic},
Z. Wahrscheinlichkeitstheorie verw. Gebiete {\bf 67} (1984), 95--107.


\end{thebibliography}
\end{document}